\DeclareMathOperator{\Span}{span}
\DeclareMathOperator{\diag}{diag}
\DeclareMathOperator{\Range}{range}
\theoremstyle{thmstyleone}%
\newtheorem{theorem}{Theorem}%
\newtheorem{proposition}[theorem]{Proposition}%
\newtheorem{lemma}{Lemma}
\theoremstyle{thmstylethree}%
\newtheorem{definition}{Definition}%
\newtheorem{remark}{Remark}%
\begin{document}

\title[Automatic coarsening in AMG with matching-based aggregations]{Automatic coarsening in Algebraic Multigrid utilizing quality measures for matching-based aggregations}

\author[1]{\fnm{Pasqua} \sur{D'Ambra}}\email{pasqua.dambra@cnr.it}
\equalcont{These authors contributed equally to this work.}

\author*[2,1]{\fnm{Fabio} \sur{Durastante}}\email{fabio.durastante@unipi.it}
\equalcont{These authors contributed equally to this work.}

\author[3,1]{\fnm{Salvatore} \sur{Filippone}}\email{salvatore.filippone@uniroma2.it}
\equalcont{These authors contributed equally to this work.}

\author[4]{\fnm{Ludmil} \sur{Zikatanov}}\email{ludmil.math@gmail.com}
\equalcont{These authors contributed equally to this work.}

\affil[1]{\orgdiv{Institute for Applied Computing ``Mauro Picone'' (IAC)}, \orgname{Consiglio Nazionale delle Ricerche}, \orgaddress{\street{Via Pietro Castellino 111}, \city{Naples}, \postcode{80131}, \state{NA}, \country{Italy}}}

\affil*[2]{\orgdiv{Dipartimento di Matematica}, \orgname{Università di Pisa}, \orgaddress{\street{Largo Bruno Pontecorvo, 5}, \city{Pisa}, \postcode{56127}, \state{PI}, \country{Italy}}}

\affil[3]{\orgdiv{Department of Civil and Computer Engineering}, \orgname{University of Rome ``Tor Vergata''}, \orgaddress{\street{Via Politecnico 1}, \city{Rome}, \postcode{00133}, \state{RM}, \country{Italy}}}

\affil[4]{\orgdiv{Department of Mathematics}, \orgname{The Pennsylvania State University}, \orgaddress{\street{McAllister Building, Pollock Rd}, \city{State College}, \postcode{16802}, \state{PA}, \country{USA}}}

\abstract{In this paper, we discuss the convergence of an Algebraic MultiGrid (AMG) method for general symmetric positive-definite matrices. The method relies on an aggregation algorithm, named {\em coarsening based on compatible weighted ma\-tch\-ing}, which exploits the interplay between the principle of compatible relaxation and the maximum product matching in undirected weighted graphs. The results are based on a general convergence analysis theory applied to the class of AMG methods employing unsmoothed aggregation and identifying a quality measure for the coarsening; similar quality measures were originally introduced and applied to other methods as tools to obtain good quality aggregates leading to optimal convergence for M-matrices. 
The analysis, as well as the coarsening procedure, is purely algebraic and, in our case, allows an \emph{a posteriori} evaluation of the quality of the aggregation procedure which we apply to analyze the impact of approximate algorithms for matching computation and the definition of graph edge weights. We also explore the connection between the choice of the aggregates and the compatible relaxation convergence, confirming the consistency between  theories for designing  coarsening procedures in purely algebraic multigrid methods and the effectiveness of the coarsening based on compatible weighted matching. We discuss various completely automatic algorithmic approaches to obtain aggregates for which good convergence properties are achieved on various test cases.}

\keywords{AMG, convergence, graph matching, aggregation, compatible relaxation}

\pacs[MSC Classification]{65M55, 05C85, 05C70}

\maketitle

\section{Introduction}
\label{sec:intro}

We assess here the convergence 
of a MultiGrid method (MG) for the solution of linear systems of the form 
\begin{equation}\label{eq:sys}
A \mathbf{u} = \mathbf{f},
\end{equation}
on the finite-dimensional linear vector space $V$ equipped with an inner product $( \cdot, \cdot)$, where $A: V \rightarrow V'$ is symmetric positive definite (SPD), $\mathbf{f} \in V'$ and $V'$ is the dual of $V$; by the Riesz representation theorem $V'$ can be identified with $V$. More specifically, we focus on a recently proposed method belonging to the class of Algebraic MultiGrid Methods (AMG) with \emph{unsmoothed aggregation} (UA-AMG) or {\em plain aggregation}~\cite{N2010,XuZikatanovReview}. These can be seen as  particular instances of a general stationary linear iterative method for solving~\eqref{eq:sys} 
\begin{equation}\label{eq:gitm}
\mathbf{u}^m=\mathbf{u}^{m-1}+B(\mathbf{u}-A\mathbf{u}^{m-1}), \quad m=1,2,\ldots; \quad \text{ given } \mathbf{u}^0 \in V,
\end{equation}
where $B: V' \rightarrow V$ is a linear operator which can be interpreted  as an approximate inverse of~$A$. An AMG method, or indeed any MG, is based on the recursive use of a two-grid scheme combining the action of a \emph{smoother}, i.e., a convergent iterative method, and a \emph{coarse-grid correction}, which corresponds to the solution of the residual equations on a coarser grid. In completely general terms, the guiding design principle of an AMG is the  optimization of the choice of coarse space for a given smoother. The most commonly used smoothers are the splitting-based methods, such as the Gauss--Seidel method and the (modified or scaled) Jacobi method. 

As usual in the MG context, the final objective of any analysis is to achieve uniform convergence with respect to the problem size (optimal convergence). Unfortunately, this is a property that can normally be established only for the two-level AMG (TL-AMG); it is  very rarely extended to the multilevel case when  no ``geometric'' information on the matrix $A$ is available. 
Our task is then to ensure the selection of an appropriate set of aggregates, i.e., the disjoint sets of fine grid unknowns to which the coarse grid unknowns are associated, to guarantee a \emph{fast} convergence at a reasonable cost per iteration. Of the many possible ways of achieving such a result, we narrow down our investigation to the case of UA-AMG; see~\cite{firstamg1,firstamg2} for the first works in this direction. Within this framework, we are going to exploit the unifying theory outlined in the review~\cite{XuZikatanovReview} to assess convergence and to investigate and characterize the quality of the coarse spaces generated by means of the aggregation procedure introduced in~\cite{DV2013,BootCMatch}. The latter is a technique based on the use of matching algorithms for edge-weighted graphs~\cite{suitor,preis,matchingduffhsl} that aims to achieve a purely algebraic and automatic approach for the solution of~\eqref{eq:sys}, with no further assumption on the SPD system matrix, and independently of any user defined parameter. Indeed, this approach fits within a trend of similar algebraic techniques, e.g., those based on path-covering algorithms~\cite{HuLinZikatanovPathCover}, or on the use of matching to generate multilevel hierarchies for graph Laplacians relative to coarse subspaces in finite elements applications~\cite{XuZikatanovAdaptiveAggregation}, striving for purely algebraic aggregation procedures that are adaptive in nature and allow for an \emph{a posteriori} analysis of the quality of the generated coarse spaces.

We observe that, as reported in~\cite[Section~8.5, Section~9.5]{XuZikatanovReview}, the general convergence theory we specialized in this paper for the aggregation based on matching in weighted graphs, was originally designed for the AGMG method in~\cite{N2010,NN2011} and extended in~\cite{NN2012,N2012}, to obtain AMG methods based on unsmoothed aggregation with a user-defined bound on the convergence rate. In~\cite{NN2012} the authors show that, for the class of nonsingular symmetric M-matrices with nonnegative row sum, if the aggregates can be built in such a way that a meaningful local bound is fulfilled, the resulting multilevel methods employing an appropriate AMLI cycle~\cite{VassilBook} shows an optimal convergence with a guaranteed convergence rate. The theory is extended to nonsymmetric M-matrices for a TL-AMG in~\cite{N2012}. In~\cite{XuZikatanovReview} the theory is again extended to more general SPD matrices and formalized as an abstract framework for the setup of coarsening methods. 

We finally note that the need to define local measures to assess the quality of a coarse space also led to the introduction of the notion of {\em compatible relaxation}.  
Compatible relaxation, first defined by Brandt in~\cite{BrandtCompRelax}, as {\em a modified relaxation scheme that keeps the coarse-level variables invariant}, was originally based on the idea to use a smoother to detect slowly converging components.
This principle has been largely applied to define a general procedure for
coarsening, both for selecting coarse variables and to adapt the prolongators in
adaptive AMG (see, e.g.,~\cite{L2004,FalgoutVassilevskyMeasure,BF2010,BBKL2011}).
It was a basic guideline for the formulation of our coarsening and
of its application in a bootstrap AMG based on composition of multiple AMG
hierarchies~\cite{DV2013,BootCMatch}. In our coarsening method, since we
explicitly define the complementary space to the coarse space, we can apply a
smoother to the only-fine variables and infer the quality of the coarse space by
an estimate of the corresponding convergence rate. Our experiments show the
coherency between the aggregation quality measure based on the general theory
in~\cite{XuZikatanovReview}, which has the advantage to be independent of the
smoother and only depends on the way we build aggregates,
and the quality measure based on the compatible relaxation.

	The main contributions of this paper can be summarized as follows. 
	\begin{itemize}
	    \item We prove that the automatic aggregation-based coarsening, relying on maximum weight matching in graphs equipped with a suitable choice of edge weights, fulfills all the conditions to have a bounded convergence rate of the corresponding TL-AMG for any SPD matrix. 
	    \item  We show how the resulting quality measure for the aggregation can be used to drive the choice of different (approximate) matching algorithms and of the edge weights in the adjacency graph of the system matrix, without resorting to heuristics and \emph{a priori} information on the near kernel of the matrix.
	    \item We emphasize the connection between the choice of the aggregates and the compatible relaxation principle for the new coarsening, confirming the consistency between the currently available theories for general coarsening in AMG.
	\end{itemize} 

The remainder of this paper is organized as follows: to begin with, in Section~\ref{sec:a-convergence-theory} we introduce a quality measure for a general UA-AMG in terms of the unifying theory from~\cite{XuZikatanovReview}. Then, in Section~\ref{sec:ourconvergence} we reintroduce the UA-AMG from~\cite{DV2013,BootCMatch} and specialize the convergence theory and the quality measure for the aggregates from the previous section to this case. Section~\ref{sec:numerical_experiments} is entirely  devoted to the application of the theory to some standard benchmarks; specifically, we investigate how the various matching algorithms applied for obtaining the aggregates influence their quality. Section~\ref{sec:compatiblerel} shows the coherency between the quality of aggregates and the convergence ratio of a convergent smoother applied to the effective smoother space, i.e., to the complementary space to the coarse space. Section~\ref{sec:concl}
summarizes conclusions.

\section{Convergence theory for TL-AMG algorithms and quality measure for aggregates}
\label{sec:a-convergence-theory}

The measure of the quality of the aggregates, and thus of the coarse space, for a given TL-AMG algorithm we are interested in depends both on the convergence ratio achieved by the resulting method and on the cost needed for defining and applying the multigrid hierarchy. To set the notation, and the context in which we are performing our analysis, let us  briefly recall the components of a TL-AMG method, i.e.:
\begin{itemize}
	\item a {\em convergent smoother}, $R: V' \rightarrow V$;  
	\item a {\em coarse space} $V_c$; this is either a subspace of $V$ or more generally a space with a smaller dimension than $V$. It is always linked to $V$ via a prolongation operator $P: V_c \rightarrow V$;
	\item a {\em coarse space solver}, $B_c: V'_c \rightarrow V_c$;
\end{itemize}
and how these components are related to its convergence properties. We follow the approach discussed in~\cite{XuZikatanovReview} that permits to analyze the convergence properties of a multigrid algorithm in a general way. To this end, we need to introduce the inner product 
\begin{equation*}
(\mathbf{u},\mathbf{v})_{\overline{R}^{-1}}=(\overline{T}^{-1}\mathbf{u},\mathbf{v})_A=(\overline{R}^{-1}\mathbf{u},\mathbf{v}), \;  \overline{T}=\overline{R}A, \text{ and }\overline{R}=R'+R-R'AR,
\end{equation*}
together with the accompanying norm $\| \cdot \|_{\overline{R}^{-1}}$, where $R'$ is the adjoint operator of $R$ and $\overline{R}$ is called the \emph{symmetrized} operator of $R$. We assume, moreover, that $\overline{R}$ is SPD, which implies that the smoother $R$ is always convergent and such that
\begin{equation*}
\|\mathbf{v}\|^2_A \leq \|\mathbf{v}\|^2_{\overline{R}^{-1}}.
\end{equation*}
The restriction of \eqref{eq:sys} to the coarse space is then expressed as
\begin{equation*}
A_c \mathbf{u}_c = \mathbf{f}_c
\end{equation*}
where 
\[
A_c=P'AP, \; \; \mathbf{f}_c=P'\mathbf{f}, \; \; \text{with} \; P' \; \; \text{adjoint operator of} \; P.
\]

For the sake of the analysis, the coarse space solver $B_c$ is often chosen to be the exact solver, namely $B_c=A_c^{-1}$, however, we should distinguish between an exact TL-AMG and an inexact TL-AMG  when $B_c$ is only an approximation of $A_c^{-1}$. Given $\mathbf{g} \in V'$, a TL-AMG operator $B$, defined by the above components is described in Algorithm \ref{tlamg-alg}. The corresponding error propagation operator $E=I-BA$ is $E=(I-RA)(I- \Pi_c)$, where $\Pi_c=PA_c^{-1}P'A$ is the orthogonal projection on $V_c$.

\begin{algorithm}[h]
	{\bf Data} $A$: matrix, $R$: convergent smoother, $P$: prolongator, $B_c$: coarse solver, $\mathbf{g}$: arbitrary vector in $V'$\\
	{\bf Result} {B$\mathbf{g}$: preconditioned vector}\\
	Coarse grid correction: $\mathbf{w}:=PB_cP'\mathbf{g}$\\
	Post-smoothing: $B\mathbf{g}:=\mathbf{w}+R(\mathbf{g}-A\mathbf{w})$\\
	\small{\caption{\emph{Two-level post-smoothed MG}\label{tlamg-alg}}}
\end{algorithm}

We can now explore the connection between the TL-AMG convergence rate and the selection of the coarse spaces. Let us consider the prolongation operator $P$, used in representing the operator $\Pi_c$; in our case,  $P$ will be  a piecewise constant prolongation, a very common choice. This means 
that the coarse grid correction computed on the residual equation will be transferred back to the fine grid by assigning the same value to all fine grid variables associated with a given coarse variable. 

A common alternative to this choice is to smooth out the prolongator $P$ by means of a number of smoothing iterations applied to a piecewise constant {\em tentative prolongator}; this choice  gives rise to the popular class of AMG algorithms with \emph{smoothed} aggregation~\cite{VMB1996,VassilBook,XuZikatanovReview}, but they are out of the scope of the present analysis.

We assume now that there exists a sequence of spaces $V_1, V_2, \ldots, V_J$, which are not necessarily subspaces of the vector space $V$, and that  each of them is related to the original space $V$ by a linear operator
\begin{equation}\label{eq:projector}
\Pi_j: V_j \rightarrow V.
\end{equation}
We are moreover assuming that $V$ can be written as a sum of subspaces as
\[
V= \sum_{j=1}^J \Pi_jV_j.
\]
Let $\underline{W} = V_1 \times V_2 \times \ldots \times V_J$, with the inner product
\[
(\underline{\mathbf{u}}, \underline{\mathbf{v}}) = \sum_{j=1}^J (\mathbf{u}_j, \mathbf{v}_j),
\]
with $\underline{\mathbf{u}}=(\mathbf{u}_1, \ldots, \mathbf{u}_J)^T$ and $\underline{\mathbf{v}}=(\mathbf{v}_1, \ldots, \mathbf{v}_J)^T$. Let also  $\Pi_W : \underline{W} \rightarrow V$ be the operator:
\begin{equation}\label{eq:aggregator_restrictor}
\Pi_W \mathbf{\underline{u}} = \sum_{j=1}^J \Pi_j \mathbf{u}_j, \ \ \forall \mathbf{\underline{u}} \in \underline{W}.
\end{equation}
We can then write
\[
\Pi_W=(\Pi_1, \ldots, \Pi_J) \ \text{and} \ \Pi'_W=(\Pi'_1, \ldots, \Pi'_J)^T.
\]
We assume that  for each $j$ there is an operator $A_j: V_j \rightarrow V'_j$ which is symmetric positive semi-definite, and we define $\underline{A}_W: \underline{W} \rightarrow \underline{W}'$ as follows:
\[
\underline{A}_W = \diag(A_1, A_2, \ldots, A_J).
\] 
We also assume that for each $j$  there is a SPD operator $D_j: V_j \rightarrow V'_j$, and define $\underline{D}: \underline{W} \rightarrow \underline{W}'$ as follows:
\[
\underline{D} = \diag(D_1, D_2, \ldots, D_J).
\]
We associate a coarse space with each $V_j$: $V_j^c \subset V_j$, and consider the corresponding orthogonal projection $Q_j: V_j \rightarrow V_j^c$ with respect to $(\cdot, \cdot)_{D_j}$. We define $\underline{Q}: \underline{W} \rightarrow \underline{W}'$ by $ \underline{Q}=\diag(Q_1, \ldots, Q_J)$.

Let us assume the following hold:
\begin{itemize}
	\item For all $\underline{\mathbf{w}} \in \underline{W}$:
	\begin{equation}\label{eq:assumption1}
	\|\Pi_W \underline{\mathbf{w}} \|_D^2 \leq C_{p,2} \| \underline{\mathbf{w}} \|_{\underline{D}}^2  
	\end{equation}
	for some positive constant $C_{p,2}$  independent of $\mathbf{w}$;
	\item For each $\mathbf{w} \in V$, there exists $\underline{\mathbf{w}} \in \underline{W}$ such that $ \mathbf{w}= \Pi_W \underline{\mathbf{w}}$ and
	\begin{equation}\label{eq:assumption2}
	\| \underline{\mathbf{w}} \|_{\underline{A}_W}^2 \leq C_{p,1} \| \mathbf{w} \|_A^2, 
	\end{equation}
	for some  positive constant $C_{p,1}$ independent of $\mathbf{w}$;
	\item For all $j$
	\begin{equation}\label{eq:assumption3}
	N(A_j) \subset V_j^c,
	\end{equation}
	where $N(A_j)$ is the kernel of $A_j$.
\end{itemize}
The above assumptions imply that if $\mathbf{w} \in N(A)$, then $\underline{\mathbf{w}} \in N(A_1) \times \ldots \times N(A_J)$.
We define the global coarse space $V_c$ by 
\begin{equation}\label{eq:coarsespace}
V_c= \sum_{j=1}^J \Pi_j V_j^c.
\end{equation}
Furthermore, for each coarse space $V_j^c$, we define:
\begin{equation}\label{eq:mujfactor}
\mu_j^{-1}(V_j^c) = \max_{\mathbf{v}_j \in V_j} \min_{\mathbf{v}_j^c \in V_j^c} \frac{\|\mathbf{v}_j - \mathbf{v}_j^c\|_{D_j}^2}{\|\mathbf{v}_j\|_{A_j}^2}.
\end{equation}
In the context of linear algebraic problems arising from finite elements methods, these are usually named the local Poincar\'e constants (see, e.g., \cite[Section~1.5]{WathenFEMBook}); finally we define 
\begin{equation}\label{eq:finalcrate}
\mu_c=\min_{\ 1 \leq j \leq J} \mu_j(V_j^c),
\end{equation}
which is finite thanks to assumption~\eqref{eq:assumption3}.

By TL-AMG convergence theory, if $D_j$ provides a convergent smoother, then $(1- \mu_j^{-1}(V_j^c))$ is the convergence rate for TL-AMG for $V_j$ with coarse space $V_j^c$ and the following theorem holds:
\begin{theorem}\label{thm:convergence_proof}
	If all the previous assumptions hold, then for each $\mathbf{v} \in V$ we have the error estimate:
	\[
	\min_{\mathbf{v}^c \in V_c} \|\mathbf{v} - \mathbf{v}_c\|_D^2 \leq C_{p,1} C_{p,2} \mu_c^{-1} \|\mathbf{v} \|_A^2.
	\] 
\end{theorem}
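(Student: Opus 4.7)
The plan is to combine the three structural assumptions into a stable decomposition argument: first lift $\mathbf{v}$ to a fictitious space $\underline{W}$, then perform the coarse approximation componentwise with control via $\mu_c^{-1}$, and finally project back to $V$ using the boundedness of $\Pi_W$.

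Concretely, given $\mathbf{v}\in V$, I would start by invoking assumption~\eqref{eq:assumption2} to obtain $\underline{\mathbf{w}}=(\mathbf{w}_1,\dots,\mathbf{w}_J)\in\underline{W}$ with $\Pi_W\underline{\mathbf{w}}=\mathbf{v}$ and $\|\underline{\mathbf{w}}\|_{\underline{A}_W}^2\leq C_{p,1}\|\mathbf{v}\|_A^2$. Next, for each index $j$ I would define the componentwise coarse approximation $\mathbf{w}_j^c := Q_j\mathbf{w}_j\in V_j^c$. Since $Q_j$ is the $(\cdot,\cdot)_{D_j}$-orthogonal projection onto $V_j^c$, it realizes the inner minimum in~\eqref{eq:mujfactor}, hence
\begin{equation*}
\|\mathbf{w}_j-\mathbf{w}_j^c\|_{D_j}^2 \;\leq\; \mu_j^{-1}(V_j^c)\,\|\mathbf{w}_j\|_{A_j}^2 \;\leq\; \mu_c^{-1}\,\|\mathbf{w}_j\|_{A_j}^2,
\end{equation*}
where the second inequality uses the definition~\eqref{eq:finalcrate} of $\mu_c$.

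Now set $\mathbf{v}_c := \Pi_W\underline{\mathbf{w}}^c = \sum_j \Pi_j \mathbf{w}_j^c$. By~\eqref{eq:coarsespace} this element lies in $V_c$, so it is a valid competitor for the minimum on the left of the theorem. Writing $\mathbf{v}-\mathbf{v}_c=\Pi_W(\underline{\mathbf{w}}-\underline{\mathbf{w}}^c)$ and applying the boundedness assumption~\eqref{eq:assumption1}, together with the additivity of $\|\cdot\|_{\underline{D}}^2$ across the block-diagonal structure of $\underline{D}$, I would chain
\begin{equation*}
\|\mathbf{v}-\mathbf{v}_c\|_D^2 \;\leq\; C_{p,2}\,\|\underline{\mathbf{w}}-\underline{\mathbf{w}}^c\|_{\underline{D}}^2 \;=\; C_{p,2}\sum_{j=1}^J\|\mathbf{w}_j-\mathbf{w}_j^c\|_{D_j}^2 \;\leq\; C_{p,2}\,\mu_c^{-1}\,\|\underline{\mathbf{w}}\|_{\underline{A}_W}^2 \;\leq\; C_{p,1}C_{p,2}\,\mu_c^{-1}\,\|\mathbf{v}\|_A^2,
\end{equation*}
which yields the claim since $\min_{\mathbf{v}^c\in V_c}\|\mathbf{v}-\mathbf{v}^c\|_D^2\leq\|\mathbf{v}-\mathbf{v}_c\|_D^2$.

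The only technical point requiring care is the possibility that some $\mathbf{w}_j$ lies in $N(A_j)$, since then the Rayleigh-type ratio defining $\mu_j^{-1}$ has a vanishing denominator. Here assumption~\eqref{eq:assumption3} rescues the argument: because $N(A_j)\subset V_j^c$, such a $\mathbf{w}_j$ satisfies $Q_j\mathbf{w}_j=\mathbf{w}_j$, so its contribution to $\|\underline{\mathbf{w}}-\underline{\mathbf{w}}^c\|_{\underline{D}}^2$ is zero and the bound holds trivially on that component. This is the step I expect to be the main subtlety—everything else is a direct chaining of the three assumptions—and it also explains why~\eqref{eq:assumption3} is imposed as a hypothesis rather than being derivable from the other two.
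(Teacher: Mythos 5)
Your argument is correct and is exactly the standard stable-decomposition proof: lift $\mathbf{v}$ via assumption~\eqref{eq:assumption2}, approximate componentwise with the $D_j$-orthogonal projections $Q_j$ using the definitions \eqref{eq:mujfactor}--\eqref{eq:finalcrate}, and map back with assumption~\eqref{eq:assumption1}, with assumption~\eqref{eq:assumption3} disposing of the kernel components. The paper itself states this theorem without proof, importing it from the cited review of Xu and Zikatanov, and your write-up coincides with the argument given there, so there is nothing to flag.
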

Then the TL-AMG with coarse space defined in \eqref{eq:coarsespace} converges with a rate:
\begin{equation}\label{eq:convergence_rate_bound}
\|E\|_A \leq 1 - \frac{\mu_c}{C_{p,1} C_{p,2} c^D}
\end{equation}
with $c^D$ depending on the convergent smoother, i.e.,
\begin{equation}\label{eq:norm_continuity_smoother}
c_D \| \mathbf{v} \|_D^2 \leq \|\mathbf{v} \|_{\overline{R}^{-1}}^2 \leq c^D \| \mathbf{v} \|_D^2.
\end{equation}
{ From the above result it is clear why the constant $\mu_c$ in~\eqref{eq:finalcrate} represents the convergence quality measure for the aggregates that we were looking for}. We will use it in Section~\ref{sec:ourconvergence},  to infer the convergence of the TL-AMG based on coarsening relying on weighted matching described in~\cite{DV2013,BootCMatch}, as well as to evaluate the quality of the aggregates. {Let us also underline that many of the convergence results for TL-AMG methods can be described by means of this set of tools; see, e.g.,~\cite[sections 12.4 and 13.1]{XuZikatanovReview} for the application to the classical AMG and aggregation-based~AMG.} 

\section{Generating aggregates from matching in weight\-ed graphs}
\label{sec:ourconvergence}

We now adopt the theory discussed in the previous section to analyze the
construction of the coarse space by means of the {\em coarsening based on
compatible weighted matching} as in~\cite{DV2013,BootCMatch}. We
note that, as described in the original papers, our aggregation approach is
driven by the idea to generate aggregates automatically, with no use of 
heuristics nor a priori information on the near kernel of the linear system;
however, after generating non-overlapped aggregates by applying maximum weight
matching, the setup of the prolongator operator is based on a projection of an
arbitrary vector (hopefully a sample of slow-convergent error components, see
Section~\ref{sec:selecting_the_weight} for discussion) on the aggregates, in a
way similar to the well-known approaches of AMG based on smoothed
aggregation~\cite{VMB1996}.

We look at the graph $G = (\mathcal{V},\mathcal{E})$ associated with the sparse matrix\footnote{For the sake of simplicity, we are using the same notation for representing linear operators and their corresponding matrices with the only change being the substitution of the adjoint operator with the transpose.} $A$, also known as the adjacency graph of $A$. This is the graph $G$ whose set of nodes $\mathcal{V}$ corresponds to the row/column indices $\mathcal{I} = \{1,\ldots,n\}$ of $A$, and whose set of edges $e_{i \mapsto j} = (i,j) \in \mathcal{E}$ is induced by the sparsity pattern of $A$. To this graph we associate an edge weight matrix $\hat{A}$ with the following entries:
\[(\hat{A})_{i,j} = \hat{a}_{i,j} = 1 - \frac{2 a_{i,j} w_i w_j}{a_{i,i} w_i^2 + a_{j,j}w_j^2},\]
where $a_{i,j}$ are the entries of $A$ and $\mathbf{w}=(w_i)_{i=1}^n$ is a given vector. 
For such a graph, a \emph{matching} $\mathcal{M}$ is a set of pairwise
non-adjacent edges, containing no loops, i.e., no two edges share a common
vertex. We call $\mathcal{M}$ a \emph{maximum product matching} if it maximizes
the product of the weights of the edges $e_{i \mapsto j}$ belonging to it, i.e.,
if it maximizes the product of the entries of $\hat{A}$ associated to the matched
indices. We stress that for sub-optimal matching algorithms, as discussed in
Section \ref{sec:matching_algorithms}, there may be nodes which are not endpoints
of any of the matched edges: we call such nodes \emph{unmatched}. By the above
procedure we are choosing as $V_1,\ldots,V_j$ the spaces defined by the
aggregates $\{\mathcal{A}_j\}_{j=1}^{J}$ for the row/column indices $\mathcal{I}$
denoting the matrix entries; equivalently, we are decomposing the index set as 
\begin{equation}\label{eq:index-set-decomposition}
\mathcal{I} = \bigcup_{i=1}^{J} \mathcal{A}_j, \; \mathcal{A}_i \cap \mathcal{A}_j = \emptyset \text{ if } i\neq j.
\end{equation}
More generally, to further reduce the dimension of the coarse space, we can perform subsequent pairwise matching steps, i.e., we can iterate $\ell$ times the
matching procedure, acting each time on the graph $G'$ obtained by collapsing
together the matched nodes from the previous step.

Let us consider the case in which a single step of pairwise aggregation is
performed. We can identify two types of aggregates $\mathcal{A}_j$: those
corresponding to pairs of matched nodes, for which $V_j = \mathbb{R}^2$, and those corresponding to the unmatched nodes, for which  $V_j = \mathbb{R}$.

The next step in the construction is the definition of the global prolongation matrix $P$ by means of the operators $\Pi_j : V_j \rightarrow V$, for $j=1,\ldots,J$, in~\eqref{eq:projector}. Let us denote by $n_p = \lvert \mathcal{M}\rvert$ the cardinality of the graph matching~$\mathcal{M}$, i.e., the number of matched nodes, and by $n_s$ the number of unmatched nodes. We identify for each edge $e_{j_1 \mapsto j_2} \in \mathcal{M}$ the vectors

\begin{equation}\label{eq:projector_defininig_vectors}
\mathbf{w}_{e_{j_1\mapsto j_2}} = \frac{1}{\sqrt{w_{j_1}^2 + w_{j_2}^2}} \begin{bmatrix}
w_{j_1} \\ w_{j_2}
\end{bmatrix}, \quad \mathbf{w}_{e_{j_1\mapsto j_2}}^\perp = \frac{1}{\sqrt{ \frac{w_{j_1}^2}{a_{j_2,j_2}^2} + \frac{w_{j_2}^2}{a_{j_1,j_1}^2}}} \begin{bmatrix}
- \frac{w_{j_1}}{a_{j_2,j_2}} \\ \frac{w_{j_2}}{a_{j_1,j_1}}
\end{bmatrix}.
\end{equation}
To build the local prolongator $\Pi_j$ we introduce the family of maps $\{\eta_j\}_{j=1}^J$~for
\begin{equation}
\begin{aligned}
& \eta_j:\;\{j_1,j_{n_j}\}\to \{1,2,\ldots,n\}\\
& \eta_j(j_p)=i \quad \Longleftrightarrow
\quad \mathcal{A}_j=\{j_1,j_{n_j}\}, \quad\mbox{and}\quad
i=j_p,
\end{aligned}
\end{equation}
where we assume that in the case of an unmatched node, i.e., when $n_j = 1$, then $\mathcal{A}_j = \{j_1\}$. Thus we have defined the correspondence relation between the indices in the local numbering on the aggregates and the numbering in the global space, that~is
\begin{equation}\label{eta-1}
\{j_1,j_{n_j}\} = \left\{\eta_j(j_1),\eta_j(j_{n_j})\right\}.
\end{equation}
Let now $\{\delta_i\}_{i=1}^{n}$ and $\{e_{j,j_p}\}_{j_p=1}^{n_j}$ be the basis of $V$ and $V_j$ respectively
\begin{equation*}
V=\operatorname{span}\{\delta_i\}_{i=1}^{n}, \quad  V_j=\operatorname{span}\{e_{j,j_p}\}_{p=1}^{n_j}.
\end{equation*}
We introduce the operator $\hat{\Pi}_j$ and its dual with respect to the Euclidean/$\ell^2$ inner product $\hat{\Pi}'_j$, respectively, as
\begin{equation*}
\forall\, s \in V_j, \;\;  s = \sum_{p=1}^{n_j} s_p e_{j,j_p}, \; \quad\text{then}\quad\hat{\Pi}_j s = \sum_{p=1}^{n_j}s_p\delta_{\eta_j(j_p)},
\end{equation*}
and
\begin{equation*}
\forall\, w \in V, \;\; \hat{\Pi}'_j w \in V_j, \qquad \hat{\Pi}'_j w = \sum_{p=1}^{n_j}w_{\eta_j(j_p)} e_{j,j_p},
\end{equation*}
that has been obtained by direct computation of its $\ell^2$ inner product. Finally, we define the $\Pi_j$ associated with the aggregates as 
\begin{equation}\label{eq:localprolongator}
\Pi_j = \hat{\Pi}_j \hat{\Pi}'_j, \quad j = 1,\ldots,J, \quad \Pi_j = \Pi_j', \;\; \Pi_j \Pi_k = 0, \text{ whenever } j \neq k.
\end{equation}
Then {$\underline{A}_W=\diag(A_1, A_2, \ldots, A_J)=\diag(\Pi'_1 A \Pi_1 , \Pi'_2 A \Pi_2 , \ldots, \Pi'_J A \Pi_J)$ is the block-diagonal operator corresponding to the restriction of $A$ to the unknowns belonging to the j-th aggregate,} and the corresponding columns of the projection matrix are given by
\begin{equation*}
\tilde{P} = [\mathbf{p}_1,\ldots,\mathbf{p}_{n_p}] \text{ for } \mathbf{p}_j = \Pi_j \mathbf{w}_{e_{i\mapsto j}}.
\end{equation*}
\begin{remark}\label{rmk:dorthognonality}
	The vectors in~\eqref{eq:projector_defininig_vectors} are by construction $D$--orthogonal with respect to the local matrix \[D_{e_{i\mapsto j}} = \diag([a_{i,i},a_{j,j}])\text{, i.e., } \mathbf{w}_{e_{i\mapsto j}}^T D_{e_{i\mapsto j}} \mathbf{w}_{e_{i\mapsto j}}^\perp = 0.\] 
\end{remark}
To complete the construction of the prolongation matrix, we also need to fix an ordering for the unmatched $n_s = n_c - n_p = J - n_p$ nodes. The local projector $\Pi_j$ is again the one in~\eqref{eq:localprolongator}, but we apply it to the scalars $\nicefrac{w_{k}}{\lvert w_k \rvert}$, $k=1,\ldots,n_s$, thus obtaining the remaining columns of the prolongation matrix
\begin{equation*}
W = [\mathbf{p}_{n_p+1},\ldots,\mathbf{p}_{n_p+n_s}] = [\mathbf{p}_{n_p+1},\ldots,\mathbf{p}_J] \text{ for } \mathbf{p}_{k} = \Pi_k \frac{\mathbf{w}_{k}}{\lvert\mathbf{w}_k\rvert}.
\end{equation*}
In an expanded form, the resulting prolongation matrix can then be expressed~as
\begin{equation}\label{eq:prolongator}
\begin{split}
P  &   \underbracket[.4pt]{\left.\left[\begin{array}{@{}c@{\quad}c}
	\underbracket[.4pt]{\left.\begin{array}{ccc}
		\mathbf{w}_{e_1} & \vphantom{\ddots}0 & 0 \\
		0 & \ddots & 0 \\
		0 & \vphantom{\ddots}0 & \mathbf{w}_{e_{n_p}}
		\end{array}\right]}_{n_p}{2n_p} & \text{\huge 0} \\
	\text{\huge 0} & \underbracket[.4pt]{\left.\begin{array}{ccc}
		\nicefrac{w_{1}}{\lvert w_1 \rvert} & \vphantom{\ddots}0 & 0 \\
		0 & \ddots & 0 \\
		0 & \vphantom{\ddots}0 & \nicefrac{w_{n_s}}{\lvert w_{n_s} \rvert}
		\end{array}\right]}_{n_s}n_s
	\end{array}\right]\right]}_{n_c = n_p + n_s = J}    \rotatebox[origin=lb]{90}{$n = 2n_p + n_s$} \\
= & \begin{bmatrix}
\tilde{P} & W
\end{bmatrix} = [\mathbf{p}_1,\ldots,\mathbf{p}_J],
\end{split}
\end{equation}
which also allows to express the global coarse space as the space generated by the columns of $P$, i.e., $V_c = \Span\{\mathbf{p}_1, \ldots, \mathbf{p}_J\}$. The matrix $P$ we have just built represents a piecewise constant interpolation operator. 

\subsection{Selecting the weight vector}
\label{sec:selecting_the_weight}
We can now use again the general theory for the convergence of a multigrid algorithm to discuss what is the optimal choice for the weight vector $\mathbf{w}$, and therefore identify the optimal prolongator operator $P$. To this aim we recall the following well known result~\cite{FVZ2005,XuZikatanovReview,BCKFH2018}.

\begin{theorem}\label{thm:optimal_prolongator}
	Let $\{ \lambda_j, \boldsymbol{\Phi}_j)_{j=1}^n$ be the eigenpairs of $\overline{T}= \overline{R} A$. Let us also assume that $\boldsymbol{\Phi}_j$ are orthogonal w.r.t. $(\cdot, \cdot)_{\overline{R}^{-1}}$. The convergence rate $\|E(P)\|_A$ is minimal for $P$ such that
	\begin{equation*}
	\Range(P)=\Range(P^{opt}), \ \ \text{where} \ \ P^{opt} = \{ \boldsymbol{\Phi}_1, \ldots, \boldsymbol{\Phi}_{n_c}\}.
	\end{equation*}
	In this case,
	\begin{equation*}
	\|E\|^2_A= 1 - \lambda_{n_c+1}
	\end{equation*}
\end{theorem}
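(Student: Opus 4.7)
My plan is to rewrite $\|E(P)\|_A^2$ as a Rayleigh--quotient problem on the $A$-orthogonal complement of $\mathrm{Range}(P)$ and then apply Courant--Fischer to the $A$-self-adjoint operator $\overline{T}$. Concretely, the $A$-adjoint of $E=(I-RA)(I-\Pi_c)$ is $E^{*}=(I-\Pi_c)(I-R'A)$, since $\Pi_c=PA_c^{-1}P'A$ is $A$-self-adjoint. Using the identity
\begin{equation*}
(I-R'A)(I-RA) \;=\; I-(R+R'-R'AR)A \;=\; I-\overline{T},
\end{equation*}
this yields $E^{*}E=(I-\Pi_c)(I-\overline{T})(I-\Pi_c)$, and the standard equality $\|E\|_A^2=\|E^{*}E\|_A$ reduces the problem to the spectral analysis of this operator in the $A$-inner product.

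Next, I would fix $v\in V$, set $w=(I-\Pi_c)v\in V_c^{\perp_A}$ with $V_c=\mathrm{Range}(P)$, and compute, using the $A$-Pythagoras identity for the denominator,
\begin{equation*}
\frac{(E^{*}Ev,v)_A}{(v,v)_A} \;=\; \frac{\|w\|_A^2-(\overline{T}w,w)_A}{\|\Pi_c v\|_A^2+\|w\|_A^2}.
\end{equation*}
The assumptions that $\overline{R}$ is SPD and $\|v\|_A\le\|v\|_{\overline{R}^{-1}}$ imply that $\overline{T}$ is $A$-self-adjoint with $A$-spectrum contained in $(0,1]$; in particular the numerator is non-negative. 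Hence the ratio is maximised by $\Pi_c v=0$, and passing to the supremum over $v$ gives
\begin{equation*}
\|E(P)\|_A^2 \;=\; 1-\min_{w\in V_c^{\perp_A}\setminus\{0\}}\frac{(\overline{T}w,w)_A}{(w,w)_A}.
\end{equation*}

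Finally, minimising $\|E(P)\|_A$ over prolongators of rank $n_c$ amounts to maximising the inner minimum over $n_c$-dimensional subspaces $V_c$. Since the family $\{\boldsymbol{\Phi}_j\}_{j=1}^{n}$ is an $\overline{R}^{-1}$-orthogonal eigenbasis of $\overline{T}$, it is also $A$-orthogonal, and the Courant--Fischer min--max theorem in the $A$-inner product (with the ordering $\lambda_1\le\cdots\le\lambda_n$) gives
\begin{equation*}
\max_{\dim V_c=n_c}\;\min_{w\in V_c^{\perp_A}\setminus\{0\}}\frac{(\overline{T}w,w)_A}{(w,w)_A} \;=\; \lambda_{n_c+1},
\end{equation*}
the maximum being attained exactly when $V_c=\mathrm{span}\{\boldsymbol{\Phi}_1,\ldots,\boldsymbol{\Phi}_{n_c}\}$: for this choice, $V_c^{\perp_A}=\mathrm{span}\{\boldsymbol{\Phi}_{n_c+1},\ldots,\boldsymbol{\Phi}_n\}$ and the Rayleigh quotient on it is minimised by $\boldsymbol{\Phi}_{n_c+1}$ with value $\lambda_{n_c+1}$. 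Both claims of the theorem follow.

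The step I expect to be most delicate is the reduction of the outer supremum to $v\in V_c^{\perp_A}$: it relies crucially on the non-negativity of $I-\overline{T}$ in the $A$-inner product, so that the numerator does not change sign, together with the $A$-orthogonality of $\Pi_c$, so that $\|v\|_A^2$ splits Pythagoreanly. Once this reduction is secured, the remainder is a textbook Courant--Fischer argument applied to $\overline{T}$ in the $A$-geometry.
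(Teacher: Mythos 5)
The paper states Theorem~\ref{thm:optimal_prolongator} without any proof, importing it from the general two-level theory of \cite{XuZikatanovReview}, so there is no internal argument to measure yours against; judged on its own, your proof is correct and complete, and it follows the standard route behind the cited result. The key identity $E^{*}E=(I-\Pi_c)(I-\overline{T})(I-\Pi_c)$ is right: the $A$-adjoint of $I-RA$ is $I-R'A$, the product $(I-R'A)(I-RA)$ telescopes to $I-\overline{R}A$, and $\Pi_c$ is $A$-self-adjoint. The step you correctly single out as delicate is properly justified: $\overline{R}$ SPD gives $(\overline{T}\mathbf{w},\mathbf{w})_A=(\overline{R}A\mathbf{w},A\mathbf{w})>0$ for $\mathbf{w}\neq 0$, and the standing assumption $\|\mathbf{v}\|_A^2\le\|\mathbf{v}\|_{\overline{R}^{-1}}^2$ is exactly $A^{1/2}\overline{R}A^{1/2}\le I$, i.e., $(\overline{T}\mathbf{w},\mathbf{w})_A\le\|\mathbf{w}\|_A^2$, so the numerator is nonnegative and the denominator may be shrunk by taking $\Pi_c\mathbf{v}=0$; surjectivity of $I-\Pi_c$ onto $V_c^{\perp_A}$ then gives the claimed reduction. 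The deduction of $A$-orthogonality of the $\boldsymbol{\Phi}_j$ from their $\overline{R}^{-1}$-orthogonality is also sound, since $A\boldsymbol{\Phi}_j=\lambda_j\overline{R}^{-1}\boldsymbol{\Phi}_j$. Two minor points: you should say explicitly that the eigenvalues are ordered nondecreasingly and that $n_c<n$ (so $V_c^{\perp_A}\neq\{0\}$); and the phrase that the Courant--Fischer maximum is attained \emph{exactly} when $V_c=\Span\{\boldsymbol{\Phi}_1,\ldots,\boldsymbol{\Phi}_{n_c}\}$ overstates uniqueness when $\lambda_{n_c}=\lambda_{n_c+1}$, though this does not affect the theorem as stated, which only asserts that this choice attains the optimal value $1-\lambda_{n_c+1}$.
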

Therefore, a sensible choice would be to include in the range of $P$ at least the first eigenvector $\boldsymbol{\Phi}_1$; this would be sufficient to enforce convergence, albeit possibly  with a poor convergence ratio.
\begin{proposition}\label{pro:at-least-is-convergent}
	Using the same notation of Theorem~\ref{thm:optimal_prolongator}, if the weight vector $\mathbf{w}$ used to define the prolongator matrix $P$ in~\eqref{eq:prolongator} is the $\boldsymbol{\Phi}_1$ eigenvector of $\overline{T}= \overline{R} A$ then the $A$--norm of the error propagation matrix $\|E\|^2_A$ is less or equal~than
	\begin{equation*}
	\|E\|^2_A \leq 1 - \lambda_{2}.
	\end{equation*}
\end{proposition}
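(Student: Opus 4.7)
The plan is to combine two observations. The first is structural: the specific form of the prolongator in \eqref{eq:prolongator} is chosen precisely so that, when the weight vector $\mathbf{w}$ equals the eigenvector $\boldsymbol{\Phi}_1$, the vector $\mathbf{w}$ itself lies in $\Range(P)$. The second is a Rayleigh-quotient argument on the $A$-orthogonal complement of $V_c := \Range(P)$, which, once it contains $\boldsymbol{\Phi}_1$, delivers the bound $1-\lambda_2$ no matter what other vectors $P$ happens to include.

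For the first step I would unpack the block structure of \eqref{eq:prolongator}. Each column associated with a matched pair $e_{j_1\mapsto j_2}$ is supported on indices $\{j_1,j_2\}$ with entries $(w_{j_1},w_{j_2})/\sqrt{w_{j_1}^2+w_{j_2}^2}$, while each column associated with an unmatched index $k$ is the canonical basis vector $\mathbf{e}_k$ rescaled by $w_k/|w_k|$. Forming the linear combination with coefficients $\sqrt{w_{j_1}^2+w_{j_2}^2}$ on matched pairs and $|w_k|$ on unmatched singletons then reproduces $\mathbf{w}$ entry by entry. Setting $\mathbf{w}=\boldsymbol{\Phi}_1$ therefore gives $\boldsymbol{\Phi}_1\in V_c$.

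For the second step I would exploit the fact that the eigenvectors $\boldsymbol{\Phi}_j$ of $\overline{T}=\overline{R}A$, being $\overline{R}^{-1}$-orthogonal, are also $A$-orthogonal through the relation $(\boldsymbol{\Phi}_i,\boldsymbol{\Phi}_j)_A=\lambda_i(\boldsymbol{\Phi}_i,\boldsymbol{\Phi}_j)_{\overline{R}^{-1}}$. Let $\Pi_c$ denote the $A$-orthogonal projector onto $V_c$; since $\boldsymbol{\Phi}_1\in V_c$, for every $\mathbf{v}\in V$ the vector $\mathbf{u}:=(I-\Pi_c)\mathbf{v}$ lies in $V_c^{\perp_A}\subseteq \Span\{\boldsymbol{\Phi}_2,\ldots,\boldsymbol{\Phi}_n\}$. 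Combining the identity $\|(I-RA)\mathbf{u}\|_A^2=\|\mathbf{u}\|_A^2-(\overline{T}\mathbf{u},\mathbf{u})_A$ (obtained by expanding the $A$-inner product and recognizing $R+R'-R'AR=\overline{R}$) with the lower bound $(\overline{T}\mathbf{u},\mathbf{u})_A\geq \lambda_2\,\|\mathbf{u}\|_A^2$ available on that eigenspan, and with the standard contraction $\|\mathbf{u}\|_A\leq\|\mathbf{v}\|_A$ of the projector, one obtains $\|E\mathbf{v}\|_A^2\leq (1-\lambda_2)\|\mathbf{v}\|_A^2$; taking the supremum over $\mathbf{v}$ yields the claim.

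The main obstacle is the first step: the bookkeeping of the normalization constants in \eqref{eq:projector_defininig_vectors} and of the signs $w_k/|w_k|$ on the singleton blocks must be handled carefully, since the whole point of the proposition is that the piecewise-constant interpolation exactly reconstructs the target vector $\boldsymbol{\Phi}_1$ (as opposed to $|\boldsymbol{\Phi}_1|$ or a rescaled surrogate). Once $\boldsymbol{\Phi}_1\in V_c$ is secured, the remainder is a direct specialization of the eigenvalue computation already underlying Theorem~\ref{thm:optimal_prolongator}, restricted to an $(n-1)$-dimensional invariant subspace.
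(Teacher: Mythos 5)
Your proposal is correct and follows essentially the same route as the paper's proof: first establish that $\mathbf{w}=\boldsymbol{\Phi}_1$ lies in $\Range(P)$ (the paper asserts the existence of $\mathbf{h}$ with $P\mathbf{h}=\mathbf{w}$, which is exactly your explicit coefficient construction), and then conclude via the spectral characterization of Theorem~\ref{thm:optimal_prolongator}. The only difference is that you spell out the Rayleigh-quotient argument on $V_c^{\perp_A}\subseteq\Span\{\boldsymbol{\Phi}_2,\ldots,\boldsymbol{\Phi}_n\}$ that the paper compresses into the phrase ``straightforward application'' of that theorem.
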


\begin{proof}
	The range of the prolongation matrix $P$ in~\eqref{eq:prolongator} includes the original vector of the weights $\mathbf{w}$, i.e., there exists $\mathbf{h} \in \mathbb{R}^{n_c}$ such that $P \mathbf{h} = \mathbf{w}$. The conclusion follows immediately by a straightforward application of Theorem~\ref{thm:optimal_prolongator}.
\end{proof}

Unfortunately, this is not an optimal choice from a computational point of view;
if we did possess some \emph{a priori} information on the eigenvector, then using
this information could improve the quality of the aggregates, and thus the
convergence of the method. 

In the case where we do not possess information on the eigenvector(s), selecting
the appropriate vector $\mathbf{w}$ may not be an easy task. To obtain a good
candidate in a completely black--box manner we could exploit the smoother
$\overline{R}$ to select as a weight vector an $\varepsilon$--smooth algebraic
vector in the sense of the following~\cite{XuZikatanovReview}: %
\begin{definition}\label{def:algebraic_smooth_vectors}
	Let $R: V \rightarrow V$ be a smoothing operator such that its symmetrization~$\overline{R}$ is positive definite. Given $\epsilon \in (0,1)$, we say that the vector $v$ is algebraically $\epsilon$-smooth with respect to $A$~if
	\begin{equation*}
	\|\mathbf{v}\|^2_A \leq \epsilon \|\mathbf{v}\|^2_{\overline{R}^{-1}}.
	\end{equation*}
\end{definition}
Such a vector can be obtained by performing a few iterations of the smoother on either a random choice or on the initial theoretical guess. 

The last possible adaptive refinement that we are going to consider is the application of a bootstrap iteration exploiting the multigrid hierarchy itself as in~\cite{BootCMatch}. A whole hierarchy $B_0$ associated with an initial guess $\mathbf{w}_0$, again either a random or user-defined guess, is built in the first step of the bootstrap procedure. Then the hierarchy is used to refine the choice of vectors $\mathbf{w}$ by means of the iteration~\eqref{eq:gitm} for the homogeneous linear system, i.e.,
\begin{equation}\label{eq:bootstrapiteration}
\text{ Given }\mathbf{w}_0 \text{ compute} \begin{cases}
\mathbf{w}^{(0)} = \mathbf{w}_{r-1}, & r=1,\ldots,k-1,\\
\displaystyle \mathbf{w}^{(j)} = \prod_{p=0}^{r-1}(I - B_p^{-1}A)\mathbf{w}^{(j-1)}, & j = 1,\ldots,m,\\
\mathbf{w}_{r+1} = \mathbf{w}^{(m)}. 
\end{cases}
\end{equation}
To build the multigrid hierarchies $B_p$ for the bootstrap iteration~\eqref{eq:bootstrapiteration} we exploit now the vectors $\mathbf{w}_r$ available at each $r$th step. 

We stress that, from an  operational point of view, this means that if one knows at least one  $\varepsilon$-smooth vector $\mathbf{w}$ to be used as $\mathbf{w}_0$, then it is possible to use it to launch the bootstrap iteration~\eqref{eq:bootstrapiteration} and obtain hierarchies $B_0,B_1,\ldots,B_{r-1}$, each satisfying the convergence result in Theorem~\ref{thm:convergence_proof}, and generating, when accumulated all--together, an algorithm with improved convergence rate. Moreover, if the bootstrap iteration is launched with a random vector then the TL--AMG algorithm with the bootstrap procedure can still obtain an acceptable convergence rate (see~\cite{BootCMatch,DV2019}).

\subsection{Selecting the matching algorithm}
\label{sec:matching_algorithms}

One of the main costs in the construction of the multigrid hierarchy is represented by the computation of the maximum product matching needed to identify the aggregates. It is useful to distinguish here between two different approaches. The first approach is to  compute an \emph{exact} matching, i.e., a matching that achieves exactly the optimum value for the  product. The  second approach computes a matching whose product value is not optimal, but is guaranteed to be greater or equal to $\nicefrac{1}{2}$ of the maximum; this is called a \emph{$\frac{1}{2}$-approximate} maximum product matching. Relaxing the requirement to obtain the exact optimum allows the achievement of both a reduction of the construction time, as well as the possibility to perform the building phase in a parallel context with a limited amount of data exchange. 
For the details regarding these computational complexity aspects, we refer to the discussion in~\cite{BootCMatch}; here we focus on the quality of the aggregates obtained by using the different matching algorithms.

For the class of exact algorithms, we employ the algorithm in~\cite{matchingduffhsl} that is  implemented in the \texttt{HSL\_MC64} routine~\cite{hslcode}, while for the approximate class we refer to the  {$\frac 12$}--approximation algorithm in~\cite{preis}, a parallel distributed-memory version of which is employed in~\cite{DDF2021}, the auction type algorithm from~\cite{auction}, and the suitor algorithm in~\cite{suitor}, which is what we applied in a parallel Graphics Processing Unit (GPU) setting (see~\cite{BDP2019}).

\subsection{Computing the \texorpdfstring{$\mu_c$}{muc} constant}\label{sec:computingmucconstant}
First we focus on the task of computing exactly the $\mu_c$ constant in Theorem~\ref{thm:convergence_proof}. Thus we first need to prove that the Assumptions in~\eqref{eq:assumption1},~\eqref{eq:assumption2} and~\eqref{eq:assumption3} hold for the construction discussed in Section~\ref{sec:ourconvergence}.

\begin{lemma}\label{lem:assumptionlemma}
	Let the two--grid multigrid hierarchy be constructed with the prolongator $P$ in~\eqref{eq:prolongator}. 
	Then assumptions~\eqref{eq:assumption1}, and \eqref{eq:assumption2} hold true with $C_{p,1}= 1$, and $C_{p,2}=1$. Moreover, if $A$ is SPD, assumption~\eqref{eq:assumption3} holds since $N(A_j) = \{\mathbf{0}\}$ for every $j$. 
\end{lemma}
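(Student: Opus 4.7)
The claim bundles three distinct assertions, and my plan is to verify them in increasing order of difficulty, exploiting throughout the rigid disjoint-partition structure guaranteed by~\eqref{eq:index-set-decomposition}: the aggregates $\mathcal{A}_j$ cover $\mathcal{I}$ without overlap, the local spaces $V_j$ have dimension exactly $|\mathcal{A}_j|$, and $\Pi_j$ is the canonical embedding that places $V_j$ entries at positions $\mathcal{A}_j$ and zeros elsewhere. In particular $\Pi_W$ is injective with columns of pairwise disjoint support, so every natural quantity built from $\Pi_W$, $\underline{A}_W$, and $\underline{D}$ collapses to an \emph{index-local} computation on each aggregate.

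I would begin with assumption~\eqref{eq:assumption3}, which is immediate: for each $j$ the operator $A_j = \Pi_j' A \Pi_j$ coincides with the principal submatrix of $A$ indexed by $\mathcal{A}_j$, either $2\times 2$ or $1\times 1$. Since $A$ is SPD, every principal submatrix is SPD, hence $N(A_j) = \{\mathbf{0}\}$ and the inclusion $N(A_j) \subset V_j^c$ holds automatically for any choice of $V_j^c$.

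Next, for assumption~\eqref{eq:assumption1}, I would take $D = \mathrm{diag}(a_{ii})$ globally and $D_j$ as its principal submatrix on $\mathcal{A}_j$. For $\underline{\mathbf{w}} = (\mathbf{w}_1, \ldots, \mathbf{w}_J) \in \underline{W}$, disjointness gives $(\Pi_W \underline{\mathbf{w}})_i = (\mathbf{w}_j)_i$ for the unique $j$ with $i \in \mathcal{A}_j$, so that
\[
\|\Pi_W \underline{\mathbf{w}}\|_D^2 = \sum_{j=1}^J \sum_{i \in \mathcal{A}_j} a_{ii}\,(\mathbf{w}_j)_i^2 = \sum_{j=1}^J \|\mathbf{w}_j\|_{D_j}^2 = \|\underline{\mathbf{w}}\|_{\underline{D}}^2,
\]
so~\eqref{eq:assumption1} holds with equality and $C_{p,2} = 1$.

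The main obstacle is assumption~\eqref{eq:assumption2}: injectivity of $\Pi_W$ forces the unique splitting $(\mathbf{w}_j)_i = w_i$ for $i \in \mathcal{A}_j$, leaving no flexibility in the decomposition. A direct expansion then yields
\[
\|\underline{\mathbf{w}}\|_{\underline{A}_W}^2 = \sum_{j=1}^J \sum_{i,k \in \mathcal{A}_j} a_{ik}\, w_i w_k = \mathbf{w}^T \tilde{A}\,\mathbf{w},
\]
where $\tilde{A}$ is the block-diagonal part of $A$ induced by the aggregate partition, so the requested bound with $C_{p,1}=1$ reduces to showing $A \succeq \tilde{A}$, i.e.\ that the off-aggregate coupling part of $A$ is positive semi-definite. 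This positivity is the delicate part, since for a generic SPD $A$ it is not automatic; my plan for closing the step is either to run a Schur-complement argument on the individual matched-pair $2\times 2$ blocks $A_j$, or to leverage the maximum-product matching criterion, which concentrates the large-magnitude edge weights $\hat a_{i,j}$ inside matched pairs and leaves only weakly coupled edges across aggregates, a smallness I would convert into positivity through a Gershgorin-style dominance estimate.
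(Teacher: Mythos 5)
Your handling of \eqref{eq:assumption1} and \eqref{eq:assumption3} coincides with the paper's: the disjoint supports give $\|\Pi_W\underline{\mathbf{w}}\|_D^2=\|\underline{\mathbf{w}}\|_{\underline{D}}^2$ term by term, and principal submatrices of an SPD matrix are SPD, so the kernels are trivial. The genuine gap is in \eqref{eq:assumption2}, and you have located it precisely but not closed it. Since $\Pi_W$ is injective the decomposition is forced, so $C_{p,1}=1$ with $A_j=\Pi_j'A\Pi_j$ amounts, as you say, to $\mathbf{w}^T\tilde A\mathbf{w}\le\mathbf{w}^TA\mathbf{w}$, i.e.\ to the remainder $A_R=A-\tilde A$ being positive semi-definite. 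Neither of your proposed closing moves can work: $A_R$ is a symmetric matrix with identically zero diagonal, and such a matrix is positive semi-definite only if it is zero (test with $\mathbf{v}=\mathbf{e}_i+t\,\mathbf{e}_j$), so every Gershgorin disc of a nonzero $A_R$ is centered at the origin and certifies nothing, and no Schur-complement manipulation of the local $2\times 2$ blocks touches the cross-aggregate entries. With $A_j$ taken literally as the principal submatrices the bound already fails for the $4\times4$ matrix $\operatorname{tridiag}(-1,2,-1)$ with aggregates $\{1,2\},\{3,4\}$ and $\mathbf{w}=(0,1,1,0)^T$, where $\mathbf{w}^T\tilde A\mathbf{w}=4>2=\mathbf{w}^TA\mathbf{w}$.

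For what it is worth, your reduction is the honest version of the step that the paper's own proof elides: its displayed chain ends with $\sum_j\langle A\Pi_j\mathbf{w}_j,\Pi_j\mathbf{w}_j\rangle=\|\mathbf{w}\|_A^2$, which silently discards the cross-aggregate terms $\sum_{j\ne k}\langle A\Pi_j\Pi_j'\mathbf{w},\Pi_k\Pi_k'\mathbf{w}\rangle$. The repair consistent with the rest of the paper (and with the Notay--Napov analysis invoked in Theorem~\ref{thm:our_convergence_result}) is to define the local operators through a splitting $A=\underline{A}_W+A_R$ with both parts symmetric non-negative definite --- in practice principal submatrices with suitably decreased diagonals --- after which $\|\underline{\mathbf{w}}\|^2_{\underline{A}_W}=\|\mathbf{w}\|_A^2-\mathbf{w}^TA_R\mathbf{w}\le\|\mathbf{w}\|_A^2$ is immediate and $C_{p,1}=1$ follows. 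You should either adopt that definition of the $A_j$ (noting that the existence of such a splitting is an extra hypothesis which can fail, as the paper's $\dagger$ entries illustrate) or restrict the statement accordingly; as written, your final step cannot be completed in the claimed generality.
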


\begin{proof}
	To prove~\eqref{eq:assumption1} we observe that by~\eqref{eq:aggregator_restrictor} and~\eqref{eq:localprolongator} we have that for all $\mathbf{\underline{v}} \in W$
	\begin{equation*}
	\|\Pi_W \mathbf{\underline{v}} \|_D^2 = \sum_{j=1}^{J}\|\Pi_j \mathbf{\underline{v}} \|_{D_j}^2 = \sum_{j=1}^{J}\left\| \begin{bmatrix}\underline{v}_{j_1}\\\underline{v}_{j_2}\end{bmatrix} \right\|_{D_j}^2 = \|\mathbf{\underline{v}} \|_{\underline{D}}^2, \Rightarrow \, C_{p,2}=1.
	\end{equation*}
	To prove~\eqref{eq:assumption1} we use the ``local-to-global'' maps in~\eqref{eta-1} to have the index correspondence between the aggregates and the global matrix. Then, noting that $\Pi_j^2=\Pi_j$, $\Pi_k\Pi_j=0$ for $k\neq j$, and that $\Pi_j=\Pi_j'$, for $j=1,\ldots,J$, by a direct computation, we find that
	\begin{align*}
	\| \underline{\mathbf{w}} \|^2_{\underline{A}_W} = & \langle \underline{A}_W  \underline{\mathbf{w}}, \underline{\mathbf{w}} \rangle_{\ell^2} =   \left\langle  \sum_{j=1}^J \Pi_j A \Pi_j \mathbf{w}, \sum_{k=1}^J \Pi_k \mathbf{w} \right\rangle_{\ell^2} \\
	= & \sum_{k=1}^J\sum_{j=1}^J \langle   \Pi_k \Pi_j A \Pi_j \mathbf{w}, \mathbf{w} \rangle_{\ell^2} = \sum_{j=1}^J \langle \Pi_j A \Pi_j \mathbf{w},\mathbf{w} \rangle_{\ell^2} \\ = & 
	\sum_{j=1}^J \langle A \Pi_j \mathbf{w}_j,\Pi_j \mathbf{w}_j\rangle_{\ell^2}  
	=  \|\mathbf{w}\|^2_A. & 
	\end{align*}
	The kernel of the projected matrices $A_j$ is reduced to the zero vector since the projector has orthogonal columns, and thus the projected matrices on $\underline{W}$ are~SPD.
\end{proof}

The above assumptions practically depend on the fact that independently from the number of aggregation sweeps we collect together, we are decomposing the index set $\mathcal{I}$ as a direct sum of non-overlapping indices as in~\eqref{eq:index-set-decomposition}.

This means that we can compute the global constant $\mu_c$ in~\eqref{eq:finalcrate} \emph{a posteriori} by solving the generalized eigenvalue problem 
\begin{equation}\label{eq:theeigenvalueproblem}
D(I - Q) \mathbf{x} = \mu_c^{-1} A \mathbf{x}, 
\end{equation}
where $Q$ has been built from the $D_j$--orthogonal projectors $Q_j : V_j \rightarrow V_j^c$, which in our case have the following representation matrices:
\begin{equation*}
Q_j = \begin{cases}
\mathbf{w}_j (\mathbf{w}_j^T D_j \mathbf{w}_j)^{-1} \mathbf{w}_j^T D_j, & j = 1,\ldots,n_p \\
1 , & j=n_p+1,\ldots,n_p+n_s=J
\end{cases}
\end{equation*}
and in aggregate form as the $D$--orthogonal projector represented by:
\begin{equation}\label{eq:thedorthoprojector}
Q = P(P^T D P)^{-1} P^TD  = \operatorname{diag}(Q_1,\ldots,Q_J).
\end{equation}

\subsection{Estimating the \texorpdfstring{$\mu_c$}{muc} constant}

The general theory for an aggregation-based multigrid, as formalized in~\cite{XuZikatanovReview} and specialized in the previous Section~\ref{sec:computingmucconstant} for our method, was originally applied in~\cite{N2010,NN2011} for the case of disjoint aggregates with piecewise constant prolongators having unit coefficients; refer also to the \emph{bibliographical notes} in \cite[Section~8.5]{XuZikatanovReview}. An additional tool provided by the discussion in~\cite{NN2011} is the possibility of carrying out a purely \emph{local} analysis by looking only at the restriction on the aggregates of the operators $\underline{A}_W$, and $\underline{D}$ under stricter hypothesis on the matrix $A$ of the system and on possible aggregates.

Specifically, to adopt the general strategy introduced in~\cite{NN2011}, we identify these operators as the restriction of the operator $A$ to the aggregates obtained through the matching algorithm,~i.e.,
\begin{equation}\label{eq:restricted_operators}
\underline{A}_W = (A_1,\ldots,A_J), \quad A_k = A\rvert_{V_k}, \quad \underline{D} = (\underline{D}_1,\ldots,\underline{D}_J), \quad D_k = D\rvert_{V_k}.
\end{equation}
We can then write the complete matrix $A$ as the sum of the block diagonal matrix $\underline{A}_W$ and a remainder $A_R$ containing all the parts we have discarded. Under the stricter hypothesis on $A$ discussed in \cite{NN2011} it is possible to find symmetric and non-negative definite $\underline{A}_W$ and $A_R$. This allows us to apply~\cite[Theorem 3.4]{NN2011} and obtain the `local` bound to the global $\mu_c$ constant in~Theorem~\ref{thm:convergence_proof}. We simply restate the result here in the notation from~\cite{XuZikatanovReview} and the construction from Section~\ref{sec:ourconvergence}.

\begin{theorem}[Restatement of {\cite[Theorem 3.4]{NN2011}}]\label{thm:our_convergence_result}
	Let $\underline{A}_W = (A_1,\ldots,A_J)$ and $\underline{D} = (\underline{D}_1,\ldots,\underline{D}_J)$ satisfy the splitting condition $A=\underline{A}_W + A_R$, with $\underline{A}_W$ and $A_R$ both symmetric and non-negative definite, that is, every $\{A_j\}_{j=1}^{J}$ is non-zero symmetric non-negative definite and $\underline{D}$ symmetric positive definite. Let $\mathbf{p}$ be one of the columns of $P$ in~\eqref{eq:prolongator}, i.e., $\mathbf{p} = \mathbf{w}_{e_{i\rightarrow j}}$ for the indices $(i,j)$ relative to the given aggregate.
	
	Then $\mu_c$ is defined as in~\eqref{eq:finalcrate}, and the $\mu_j^{-1}(V_j^c)$ are such that
	\begin{equation*}
	\lambda_2^{-1}(D_j^{-1}A_j) \leq \mu_j^{-1}(V_j^c) \leq \lambda_1^{-1}(D_j^{-1}A_j).
	\end{equation*}
	Moreover, if either $(\mathbf{w}_{e_{i\rightarrow j}},\lambda_1(D_j^{-1}A_j))$, or $(\mathbf{w}_{e_{i\rightarrow j}}^\perp,\lambda_2(D_j^{-1}A_j))$ are eigencouples of the matrix $D_j^{-1}A_j$, then  
	\begin{equation*}
	\mu_j^{-1}(V_j^c) = \lambda_2^{-1}(D_j^{-1}A_j).
	\end{equation*}
\end{theorem}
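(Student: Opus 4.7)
The plan is to reduce the definition of $\mu_j^{-1}(V_j^c)$ to a two-dimensional generalized Rayleigh quotient and solve it in closed form. Since $V_j^c = \Span\{\mathbf{w}_{e_{i\rightarrow j}}\}$ is one-dimensional inside $V_j = \mathbb{R}^2$, the inner minimum in~\eqref{eq:mujfactor} is attained at the $D_j$-orthogonal projection $\mathbf{v}_j^c = Q_j \mathbf{v}_j$, so
\[
\mu_j^{-1}(V_j^c) \;=\; \max_{\mathbf{v}\in V_j} \frac{\mathbf{v}^{T} D_j (I-Q_j)\mathbf{v}}{\mathbf{v}^{T} A_j \mathbf{v}},
\]
which is the largest generalized eigenvalue of the rank-one SPSD pencil $(D_j(I-Q_j), A_j)$. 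Under the splitting hypothesis, Lemma~\ref{lem:assumptionlemma} guarantees that $A_j$ is SPD on $V_j$, so the pencil is well posed and, in particular, $\lambda_1(D_j^{-1}A_j) > 0$.

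Next, I would diagonalize simultaneously in the $D_j$-orthonormal eigenbasis $\{\phi_1,\phi_2\}$ of $D_j^{-1}A_j$, labeled so that $\lambda_1 \leq \lambda_2$. Writing $\mathbf{v} = a_1\phi_1 + a_2\phi_2$ and $\mathbf{w}_{e_{i\rightarrow j}} = b_1\phi_1 + b_2\phi_2$, the biorthogonality relations $\phi_k^{T}D_j\phi_\ell = \delta_{k\ell}$ and $\phi_k^{T}A_j\phi_\ell = \lambda_k\delta_{k\ell}$ allow me to evaluate
\[
\|\mathbf{v} - Q_j\mathbf{v}\|_{D_j}^{2} \;=\; \frac{(b_2 a_1 - b_1 a_2)^{2}}{b_1^{2}+b_2^{2}}, \qquad \|\mathbf{v}\|_{A_j}^{2} \;=\; \lambda_1 a_1^{2} + \lambda_2 a_2^{2}.
\]
A Lagrange-multiplier maximization in $(a_1,a_2)$ yields the closed form
\[
\mu_j^{-1}(V_j^c) \;=\; \frac{b_1^{2}\lambda_1 + b_2^{2}\lambda_2}{(b_1^{2}+b_2^{2})\,\lambda_1\lambda_2},
\]
which, since $\lambda_1 \leq \lambda_2$, lies in the interval $[\lambda_2^{-1},\lambda_1^{-1}]$; the lower extreme is attained when $b_2 = 0$ and the upper when $b_1 = 0$. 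This establishes the two-sided bound.

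For the equality clause, note that $b_2 = 0$ is precisely the statement $\mathbf{w}_{e_{i\rightarrow j}} \propto \phi_1$, i.e.\ that $(\mathbf{w}_{e_{i\rightarrow j}},\lambda_1)$ is an eigencouple of $D_j^{-1}A_j$. The alternative condition is equivalent: by Remark~\ref{rmk:dorthognonality}, $\mathbf{w}_{e_{i\rightarrow j}}^\perp$ is $D_j$-orthogonal to $\mathbf{w}_{e_{i\rightarrow j}}$, and in the two-dimensional space $V_j$ the eigenvectors $\phi_1,\phi_2$ are also $D_j$-orthogonal, so requiring that $\mathbf{w}_{e_{i\rightarrow j}}^\perp$ be the eigenvector associated with $\lambda_2$ forces $\mathbf{w}_{e_{i\rightarrow j}}$ to span the $\lambda_1$-eigenspace; in either case $\mu_j^{-1}(V_j^c) = \lambda_2^{-1}$.

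The delicate point is not the Rayleigh-quotient calculation but securing the splitting $A = \underline{A}_W + A_R$ with both summands SPSD. This is exactly the hypothesis imported from~\cite{NN2011} and is what would prevent a naive extension of the local bound to higher-dimensional aggregates without further structural assumptions on $A$ (such as the M-matrix-like conditions used there). Once that splitting is granted, the argument above is entirely local on each pair, and the global estimate on $\mu_c$ follows by taking the minimum over $j$ in~\eqref{eq:finalcrate}.
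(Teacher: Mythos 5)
Your argument is correct, but it is worth noting that the paper itself does not prove this statement: Theorem~\ref{thm:our_convergence_result} is presented explicitly as a restatement of \cite[Theorem~3.4]{NN2011} translated into the notation of \cite{XuZikatanovReview}, and the proof is delegated entirely to that reference. What you have produced is therefore a self-contained, elementary derivation that the paper omits. Your reduction of $\mu_j^{-1}(V_j^c)$ in \eqref{eq:mujfactor} to the generalized Rayleigh quotient $\max_{\mathbf{v}}\,\mathbf{v}^T D_j(I-Q_j)\mathbf{v}/\mathbf{v}^T A_j\mathbf{v}$, the simultaneous diagonalization in the $D_j$-orthonormal eigenbasis of $D_j^{-1}A_j$, and the closed form
\begin{equation*}
\mu_j^{-1}(V_j^c)=\frac{b_1^2\lambda_1+b_2^2\lambda_2}{(b_1^2+b_2^2)\lambda_1\lambda_2}=\frac{t}{\lambda_2}+\frac{1-t}{\lambda_1},\qquad t=\frac{b_1^2}{b_1^2+b_2^2},
\end{equation*}
all check out (the numerator identity $(a_1^2+a_2^2)(b_1^2+b_2^2)-(b_1a_1+b_2a_2)^2=(a_1b_2-a_2b_1)^2$ and the Cauchy--Schwarz maximization are both correct), and the convex-combination reading immediately gives the two-sided bound and identifies the equality cases. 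This exploits the fact that the matched aggregates are two-dimensional and the coarse space is one-dimensional, which is exactly the structural feature that makes the local analysis tractable here; the approach in \cite{NN2011} is stated for more general aggregate sizes, which is what the citation buys the paper at the cost of opacity.

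One imprecision you should fix: you invoke Lemma~\ref{lem:assumptionlemma} to claim that $A_j$ is SPD, but that lemma concerns the principal submatrices $\Pi_j'A\Pi_j$, whereas the $A_j$ in Theorem~\ref{thm:our_convergence_result} come from the splitting $A=\underline{A}_W+A_R$ and are only assumed nonzero symmetric non-negative definite (in the experiments they are obtained by \emph{decreasing} the diagonal blocks, so positive definiteness can genuinely fail). If $\lambda_1(D_j^{-1}A_j)=0$ the upper bound is vacuous and your ratio requires a limiting argument; finiteness of $\mu_j^{-1}(V_j^c)$ then rests on assumption \eqref{eq:assumption3}, i.e., $N(A_j)\subset V_j^c$, in which case $b_2=0$ and the value degenerates directly to $\lambda_2^{-1}$. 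With that caveat handled, the proof is complete.
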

We stress that while in general it is always possible to compute the quantity $\mu_c$ in~\eqref{eq:finalcrate} by solving the eigenvalue problem in~\eqref{eq:theeigenvalueproblem}, and thus estimate the overall quality of the matching procedure, application of Theorem~\ref{thm:our_convergence_result} to obtain the bound by using only local information requires the stricter hypotheses on the splitting of $A$. 

\section{Numerical experiments}\label{sec:numerical_experiments}
To highlight the results of Theorem~\ref{thm:our_convergence_result} we consider the case study of the 2D Laplace equation with variable coefficients on the unit square $\Omega = [0,1]^2$, dicretized with 5--point finite differences,  i.e. the equation
\begin{equation}\label{eq:thediffusionproblem}
\begin{cases}
-\nabla \cdot( a(x,y) \nabla u(x,y)) = f(x,y), & (x,y) \in \Omega,\\
u(x,y) = 0, & (x,y) \in \partial \Omega,
\end{cases}
\end{equation}
{ and discretized by Lagrangian P1 elements on an unstructured triangular grid.}
We focus on a 2D example so that we can graphically represent the different aggregates. We concentrate first on the computation of the bounds discussed in Theorem~\ref{thm:our_convergence_result} and on the analysis of the different bounds obtained for the different choices of the matching algorithm in Section~\ref{sec:matching_algorithms} while keeping fixed the choice of the weight vector $\mathbf{w}$. Then, in the second part of the numerical examples, we devote our attention to the analysis of the quality of the aggregates for different choices of the weight vectors $\mathbf{w}$, while considering also the different refinement strategies discussed in Section~\ref{sec:selecting_the_weight}.

The version of the BootCMatch algorithm~\cite{BootCMatch} we use here for the tests is available on the repository \url{https://github.com/bootcmatch/BootCMatch}. All the plots and the eigenvalues/eigenvectors computations are then performed in Matlab v. 9.6.0.1072779 (R2019a) on the matrices exported in Matrix Market format.

  \subsection{Computing the \texorpdfstring{$\mu_c$}{muc} constants}\label{sec:computing_the_muc_constant}
To confirm the applicability of the theory developed in Section~\ref{sec:ourconvergence} we compute both the ``true'' $\mu_c$ constants by solving the generalized eigenvalue problem with the $D$--orthogonal projector $Q$ in~\eqref{eq:thedorthoprojector}, and the estimate obtained by means of Theorem~\ref{thm:our_convergence_result}{, when the splitting for the matrices $\underline{A}_{W}$ is available,} for three different prototypical problems obtained from different choices of the diffusion coefficient in~\eqref{eq:thediffusionproblem}. For each of these cases we consider the various matching algorithms discussed in Section~\ref{sec:matching_algorithms} and the application of $\ell = 1, 2$ steps of pairwise matching, i.e., we consider aggregates made by at most two or four fine variables. In all cases, we consider the weight vector $\mathbf{w} = (1,1,\ldots,1)^T$, which is suggested by the structure of the matrix. We stress that all the results obtained in the following subsections can be read alongside the numerical experiments in~\cite{BootCMatch} since they complement and further explains the convergence behavior of the method discussed there. To present a wider array of tests, we have given other examples in Appendix~\ref{sec:appendix_computing_muc}.

\subsubsection{The constant coefficient diffusion}\label{sec:constant_coefficient_diffusion} The first case is the Laplacian with homogeneous coefficients, i.e., $a(x,y) = 1$, on a uniform $n\times n$ grid. This gives rise to the matrix
\begin{equation*}
A_{n^2} = I_n \otimes T_n + T_n \otimes I_n, \quad T_n = \operatorname{tridiag}(-1,2,-1),
\end{equation*}
scaled in such a way that its coefficients are independent from the dimension $n^2$ of the problem. We first  visualize the different aggregates generated by the various matching algorithms in Figure~\ref{fig:homogeneous_poisson_aggregates}.
\begin{figure}[htbp]
	\centering
	\subfloat[\texttt{HSL\_MC64} -- $\ell=1$]{\includegraphics[width=0.25\columnwidth]{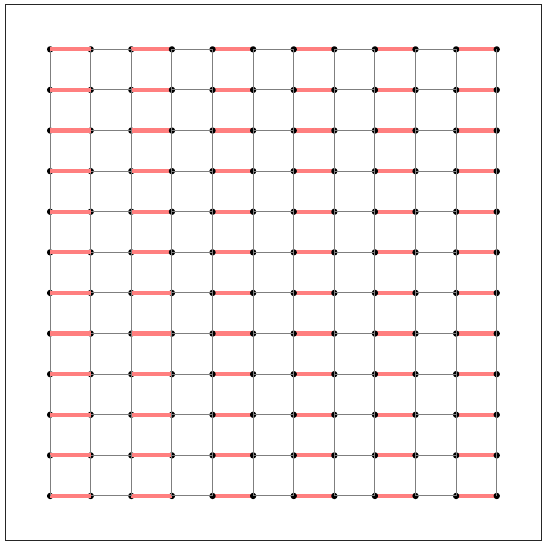}}\hfil
	\subfloat[\texttt{HSL\_MC64} -- $\ell=2$]{\includegraphics[width=0.25\columnwidth]{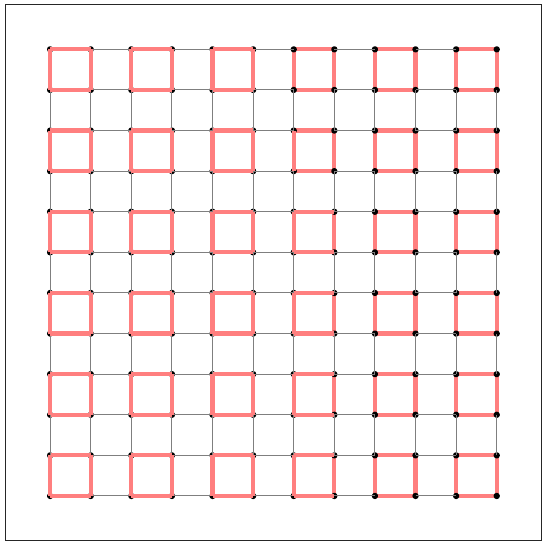}}\hfil
	\subfloat[\texttt{PREIS} -- $\ell=1$]{\includegraphics[width=0.25\columnwidth]{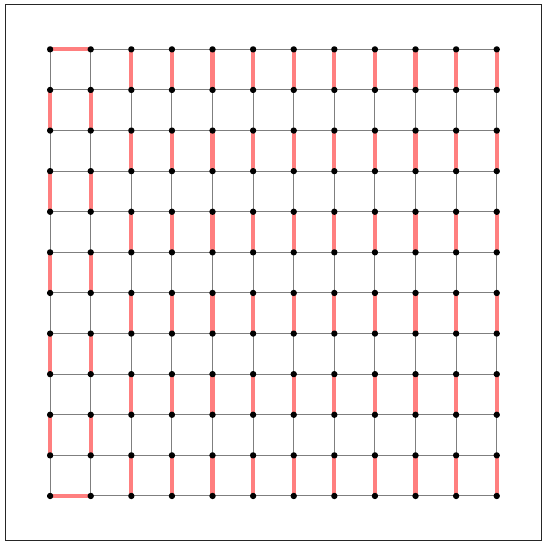}}\hfil
	\subfloat[\texttt{PREIS} -- $\ell=2$]{\includegraphics[width=0.25\columnwidth]{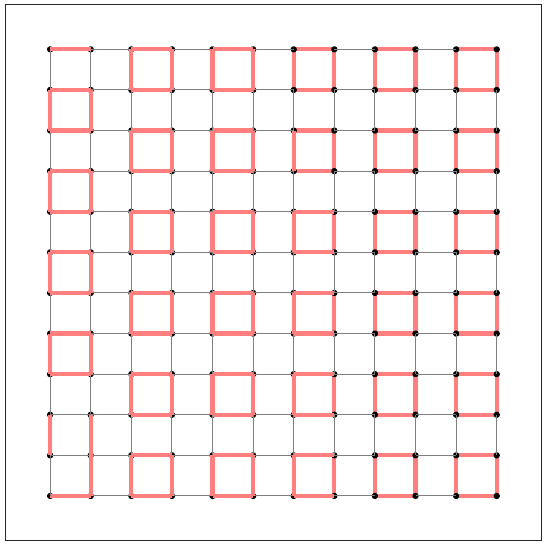}}
	
	\subfloat[\texttt{AUCTION} -- $\ell=1$]{\includegraphics[width=0.25\columnwidth]{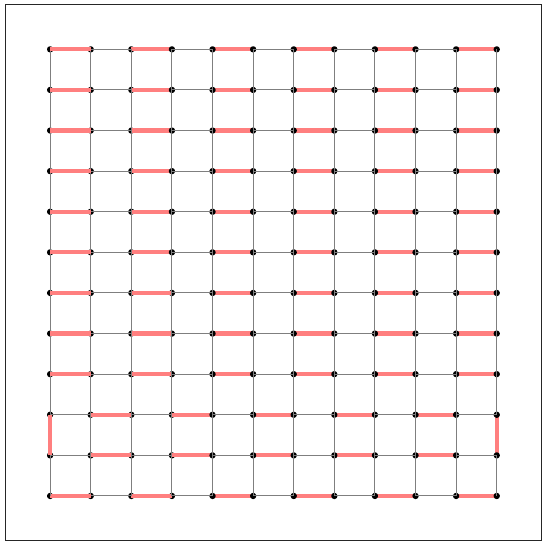}}\hfil
	\subfloat[\texttt{AUCTION} -- $\ell=2$]{\includegraphics[width=0.25\columnwidth]{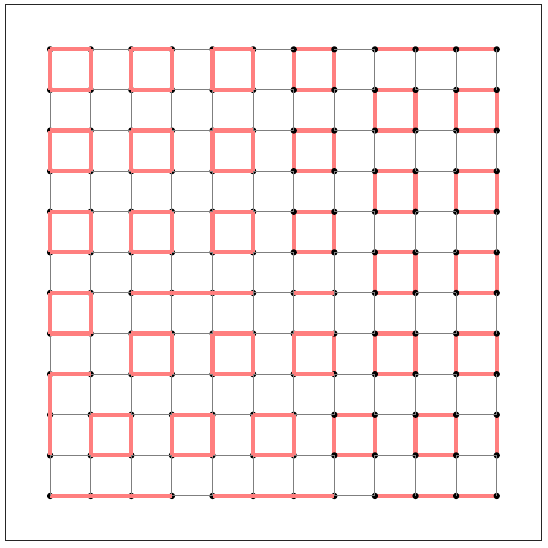}}\hfil
	\subfloat[\texttt{SUITOR} -- $\ell=1$]{\includegraphics[width=0.25\columnwidth]{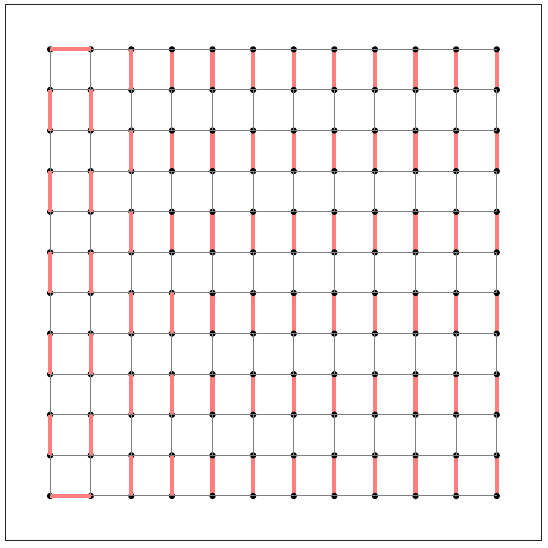}}\hfil
	\subfloat[\texttt{SUITOR} -- $\ell=2$]{\includegraphics[width=0.25\columnwidth]{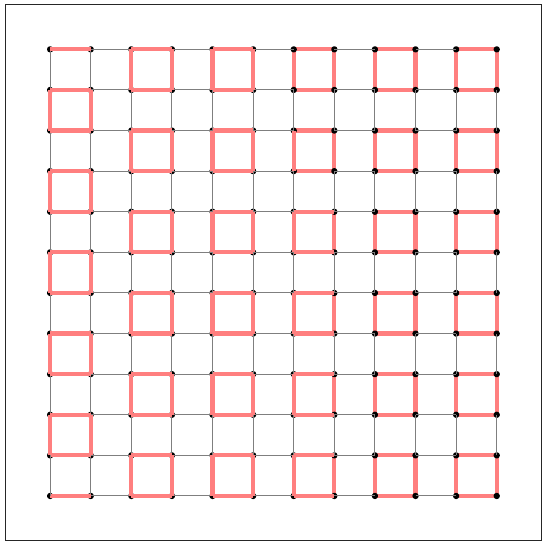}}
	\caption{Constant coefficient diffusion problem. Aggregates obtained with the weight vector $\mathbf{w}=(1,1,\ldots,1)^T$, and the different matching algorithms for $\ell=1,2$ pairwise matching steps.}
	\label{fig:homogeneous_poisson_aggregates}
\end{figure}
In this case the aggregation based on the maximum product matching \texttt{HSL\_MC64} produces the same aggregates that can be obtained by using the standard C$\backslash$F--splitting. Moreover, by~\eqref{eq:prolongator} it is straightforward to observe that $P$ is a scalar multiple of the one obtained by choosing $\mathbf{w}_{e_i}$ equal to the vector of all ones; hence, the methods produce exactly the same $Q$ of the classical aggregation, and therefore the same bounds obtained for it in~\cite[Theorem~3.4]{NN2011}. The aggregates also match the quality of the aggregates in~\cite{MatchingKimXuZikatanov}, in which the matching strategy for the identification of the aggregates is applied directly to $A$ and coupled with the prolongator $P$ whose nonzero entries are all $1$; see the results in Table~\ref{tab:hom_coeff_w_const}.
\begin{table}[htbp]
	\centering
	\caption{Constant coefficient diffusion problem. Comparison of the bound in Theorem~\ref{thm:our_convergence_result} with true value of $\mu_c$ in~\eqref{eq:finalcrate}.
		Aggregates obtained with the weight vector $\mathbf{w}=(1,1,\ldots,1)^T$, and the different matching algorithms for $\ell=1,2$ pairwise matching steps.}
	\label{tab:hom_coeff_w_const}
	\subfloat[\texttt{HSL\_MC64} -- exact matching]{
		\begin{tabular}{lcccc}
			
			& \multicolumn{2}{c}{$\ell = 1$} & \multicolumn{2}{c}{$\ell = 2$} \\
			\cmidrule(l{2pt}r{2pt}){2-3}\cmidrule(l{2pt}r{2pt}){4-5}
			n & bound & $\mu_c^{-1}$ & bound & $\mu_c^{-1}$ \\
			\midrule
			12 & 2.000 & 1.940 & 2.000 & 1.959 \\
			24 & 2.000 & 1.984 & 2.000 & 1.989 \\
			48 & 2.000 & 1.996 & 2.000 & 1.997 \\
			96 & 2.000 & 1.999 & 2.000 & 1.999 \\
			
	\end{tabular}}\hfil
	\subfloat[\texttt{PREIS} -- $\frac{1}{2}$--approximate matching]{
		\begin{tabular}{lcccc}
			
			& \multicolumn{2}{c}{$\ell = 1$} & \multicolumn{2}{c}{$\ell = 2$} \\
			\cmidrule(l{2pt}r{2pt}){2-3}\cmidrule(l{2pt}r{2pt}){4-5}
			n & bound & $\mu_c^{-1}$ & bound & $\mu_c^{-1}$ \\
			\midrule
			12 & 2.000 & 1.923& 2.062 & 2.046 \\
			24 & 2.000 & 1.982& 2.062 & 2.052 \\
			48 & 2.000 & 1.996& 2.062 & 2.052 \\
			96 & 2.000 & 1.999& 2.062 & 2.052 \\
			
	\end{tabular}}
	
	\subfloat[\texttt{AUCTION} -- $\frac{1}{2}$--approximate matching]{
		\begin{tabular}{lcccc}
			
			& \multicolumn{2}{c}{$\ell = 1$} & \multicolumn{2}{c}{$\ell = 2$} \\
			\cmidrule(l{2pt}r{2pt}){2-3}\cmidrule(l{2pt}r{2pt}){4-5}
			n & bound & $\mu_c^{-1}$ & bound & $\mu_c^{-1}$ \\
			\midrule
			12 & 2.000 & 1.908 & 2.667 & 2.544 \\
			24 & 2.000 & 1.980 & 2.894 & 2.964 \\
			48 & 2.000 & 1.995 & 2.667 & 2.166 \\ 
			96 & 2.000 & 1.999 & 2.667 & 2.173 \\
			
	\end{tabular}}\hfil
	\subfloat[\texttt{SUITOR} -- $\frac{1}{2}$--approximate matching]{
		\begin{tabular}{lcccc}
			
			& \multicolumn{2}{c}{$\ell = 1$} & \multicolumn{2}{c}{$\ell = 2$} \\
			\cmidrule(l{2pt}r{2pt}){2-3}\cmidrule(l{2pt}r{2pt}){4-5}
			n & bound & $\mu_c^{-1}$ & bound & $\mu_c^{-1}$ \\
			\midrule
			12 & 2.000 & 1.923 & 2.000 & 1.954 \\
			24 & 2.000 & 1.982 & 2.000 & 1.988 \\
			48 & 2.000 & 1.996 & 2.000 & 1.997 \\
			96 & 2.000 & 1.999 & 2.000 & 1.999 \\
			
	\end{tabular}}
\end{table}

Concerning the usage of alternative matching methods, we see that the \texttt{HSL\_MC64} and the \texttt{SUITOR} algorithms do produce the same $\mu_c$ constants and bounds, even if \texttt{SUITOR} is only guaranteed to reach a value of the objective function  one half away from the optimal one. In general, we can observe that in the cases $\ell=1$ the same constants are reached for different aggregates. This suggests that reaching the maximum weight is not mandatory and that different configurations can yield the same results in terms of the overall quality of the aggregates. {To achieve the upper bound from Theorem~\ref{thm:our_convergence_result}, we use the auxiliary splitting obtained by decreasing the diagonal blocks on the various aggregates by a correction of the form $\pm \delta_j I$ where each $\delta_j$ is computed heuristically to enforce the hypotheses. In these cases, for all the matching algorithms when we employ a single sweep, we use $\delta_j = 1/3 \min(A_j \mathbf{1})$, that is $1/3$ of the minimum row sum of the projection of $A$ on the aggregate. When two sweeps are employed, we use instead $\delta_j = \min(A_j \mathbf{1})$ for all the matching but the Auction case in which we employ $\delta_j = 1/2\min(A_j \mathbf{1})$. We stress that it is difficult to prescribe a formula to achieve the splitting and the local bound without looking into the matrices obtained from the matching procedure, since in general, this may not exist; see, e.g., the next example in which we encounter such a case for one of the matching algorithms.}

\subsubsection{Diffusion with axial anisotropies} As the second test case we consider having a simple spatial anisotropy oriented with the $y$--grid lines, i.e.,
\begin{equation*}
A_{n^2} = \varepsilon(I_n \otimes T_n) + T_n \otimes I_n, \quad T_n = \operatorname{tridiag}(-1,2,-1), \quad \varepsilon = 100,
\end{equation*}
in which we are again using a scaling that makes the matrix coefficients independent of the problem size. Intuitively, in this case,  we would  expect the aggregates to be oriented with the anisotropy, i.e., along  the $y$--axis. If we look at the aggregates we obtain in Figure~\ref{fig:yanisotropy_poisson_aggregates} we observe that the matching algorithms produce aggregates corresponding to our intuition, with the  exception of the \texttt{PREIS} algorithm that for $\ell=2$ produces some aggregates that do not seem feasible.
\begin{figure}[htbp]
	\centering
	\subfloat[\texttt{HSL\_MC64} -- $\ell=1$]{\includegraphics[width=0.25\columnwidth]{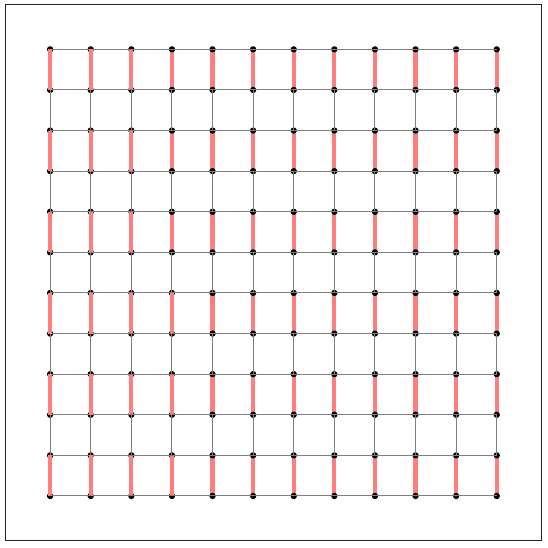}}\hfil
	\subfloat[\texttt{HSL\_MC64} -- $\ell=2$]{\includegraphics[width=0.25\columnwidth]{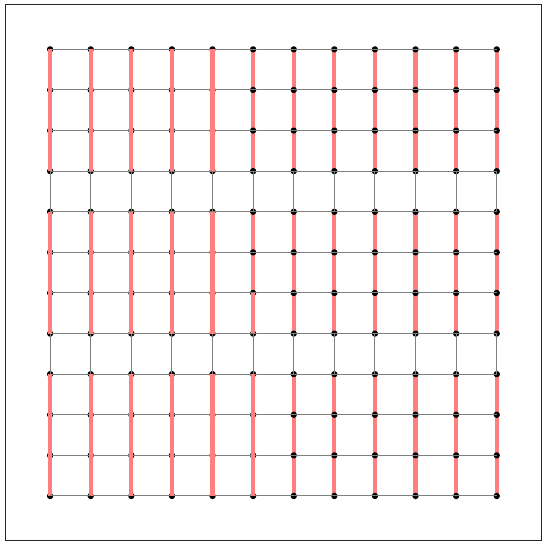}}\hfil
	\subfloat[\texttt{PREIS} -- $\ell=1$]{\includegraphics[width=0.25\columnwidth]{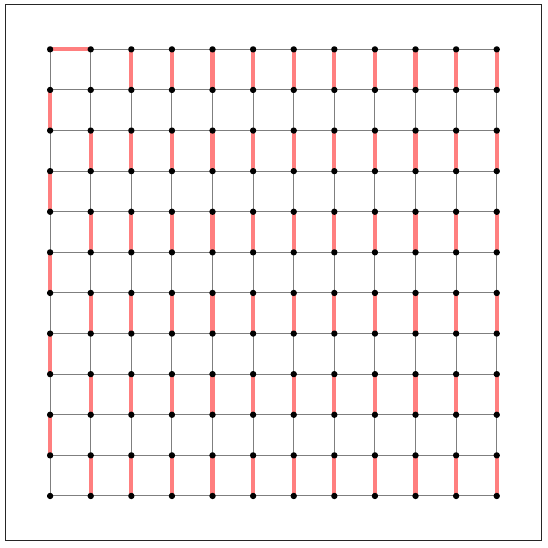}}\hfil
	\subfloat[\texttt{PREIS} -- $\ell=2$]{\includegraphics[width=0.25\columnwidth]{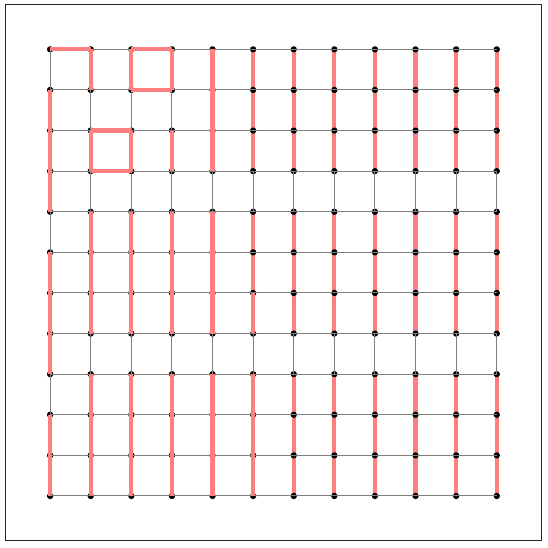}}
	
	\subfloat[\texttt{AUCTION} -- $\ell=1$]{\includegraphics[width=0.25\columnwidth]{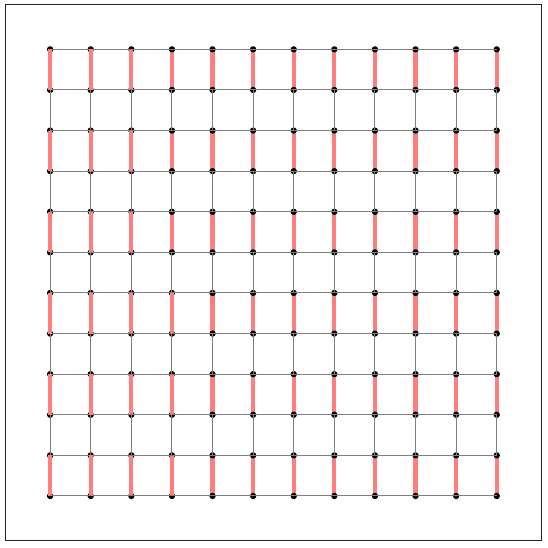}}\hfil
	\subfloat[\texttt{AUCTION} -- $\ell=2$]{\includegraphics[width=0.25\columnwidth]{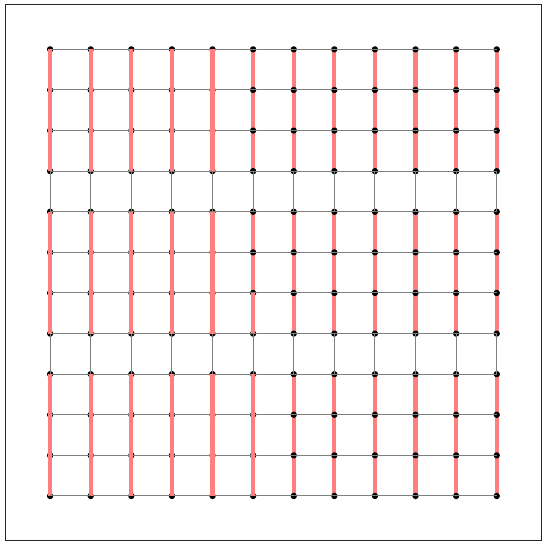}}\hfil
	\subfloat[\texttt{SUITOR} -- $\ell=1$]{\includegraphics[width=0.25\columnwidth]{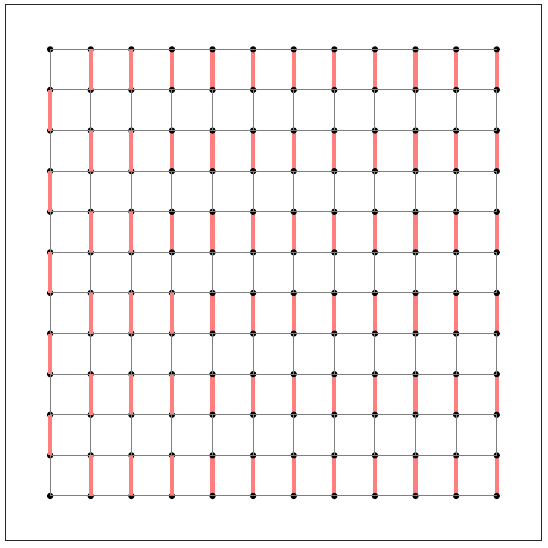}}\hfil
	\subfloat[\texttt{SUITOR} -- $\ell=2$]{\includegraphics[width=0.25\columnwidth]{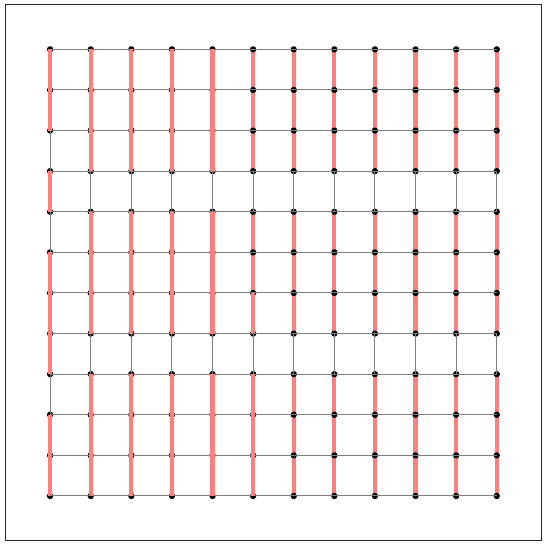}}
	\caption{Diffusion problem with $y$--axis oriented anisotropy $\varepsilon = 100$. Aggregates obtained with the weight vector $\mathbf{w}=(1,1,\ldots,1)^T$, and the different matching algorithms for $\ell=1,2$ pairwise matching steps.}
	\label{fig:yanisotropy_poisson_aggregates}
\end{figure}
Indeed, if we look also at the constants $\mu_c$, and their estimates reported in Table~\ref{tab:yanisotropy_poisson_aggregates} we observe that, excluding the case of the \texttt{PREIS} algorithm, the $\mu_c$ constant behaves consistently. {The failure in obtaining a bound in the case of the \texttt{PREIS} algorithm is due to the inability of finding a suitable splitting for the aggregates generated by this matching. Indeed, the existence of such splitting is a stricter hypothesis, and cannot be guaranteed in general. We refer back to the discussion in~\cite{NN2011} where the original strategy for obtaining the local bound was devised.} 
\begin{table}[htbp]
	\centering
	\caption{Diffusion problem with $y$--axis oriented anisotropy $\varepsilon = 100$. Comparison of the bound in Theorem~\ref{thm:our_convergence_result} with true value of $\mu_c$ in~\eqref{eq:finalcrate} for $\ell=1,2$ pairwise aggregation steps, while using the various matching algorithm with weight vector $\mathbf{w}=(1,1,\ldots,1)^T$. The $\dagger$ represents a case {in which we could not find the splitting needed to apply Theorem~\ref{thm:our_convergence_result}}.}
	\label{tab:yanisotropy_poisson_aggregates}
	\subfloat[\texttt{HSL\_MC64} -- exact matching]{
		\begin{tabular}{lcccc}
			
			& \multicolumn{2}{c}{$\ell = 1$} & \multicolumn{2}{c}{$\ell = 2$} \\
			\cmidrule(l{2pt}r{2pt}){2-3}\cmidrule(l{2pt}r{2pt}){4-5}
			n & bound & $\mu_c^{-1}$ & bound & $\mu_c^{-1}$ \\
			\midrule
			12 & 1.980 & 1.010 & 5.025 & 3.443 \\
			24 & 1.980 & 1.010 & 5.025 & 3.447 \\
			48 & 1.980 & 1.010 & 5.025 & 3.448 \\
			96 & 1.980 & 1.010 & 5.025 & 3.448 \\
			
	\end{tabular}}\hfil
	\subfloat[\texttt{PREIS} -- $\frac{1}{2}$--approximate matching]{
		\begin{tabular}{lcccc}
			
			& \multicolumn{2}{c}{$\ell = 1$} & \multicolumn{2}{c}{$\ell = 2$} \\
			\cmidrule(l{2pt}r{2pt}){2-3}\cmidrule(l{2pt}r{2pt}){4-5}
			n & bound & $\mu_c^{-1}$ & bound & $\mu_c^{-1}$ \\
			\midrule
			12 & 1.765 & 1.741 & $\dagger$ & 8.580 \\
			24 & 1.765 & 1.745 & $\dagger$ & 8.725 \\
			48 & 1.765 & 1.745 & $\dagger$ & 8.730 \\
			96 & 1.765 & 1.745 & $\dagger$ & 8.730 \\
			
	\end{tabular}}
	
	\subfloat[\texttt{AUCTION} -- $\frac{1}{2}$--approximate matching]{
		\begin{tabular}{lcccc}
			
			& \multicolumn{2}{c}{$\ell = 1$} & \multicolumn{2}{c}{$\ell = 2$} \\
			\cmidrule(l{2pt}r{2pt}){2-3}\cmidrule(l{2pt}r{2pt}){4-5}
			n & bound & $\mu_c^{-1}$ & bound & $\mu_c^{-1}$ \\
			\midrule
			12 & 1.980 & 1.010 & 5.025 & 3.443 \\
			24 & 1.980 & 1.010 & 5.025 & 3.447 \\
			48 & 1.980 & 1.010 & 5.025 & 3.448 \\
			96 & 1.980 & 1.010 & 5.025 & 3.448 \\
			
	\end{tabular}}\hfil
	\subfloat[\texttt{SUITOR} -- $\frac{1}{2}$--approximate matching]{
		\begin{tabular}{lcccc}
			
			& \multicolumn{2}{c}{$\ell = 1$} & \multicolumn{2}{c}{$\ell = 2$} \\
			\cmidrule(l{2pt}r{2pt}){2-3}\cmidrule(l{2pt}r{2pt}){4-5}
			n & bound & $\mu_c^{-1}$ & bound & $\mu_c^{-1}$ \\
			\midrule
			12 & 1.111 & 1.010 & 3.448 & 3.442 \\
			24 & 1.111 & 1.010 & 3.448 & 3.447 \\
			48 & 1.111 & 1.010 & 3.448 & 3.448 \\
			96 & 1.111 & 1.010 & 3.448 & 3.448 \\
			
	\end{tabular}}
\end{table}
It is interesting to compare the value of the constant for $\ell=1$ step of matching for this case with the one obtained for the case with constant coefficients in Table~\ref{tab:hom_coeff_w_const}:  observe in particular that the strong directionality of the diffusion makes the pairwise aggregates much more effective. On the other hand, we observe also that switching to larger aggregates leads to a worse quality of the aggregates  than in the case of an isotropic problem. 

\subsubsection{Diffusion on an unstructured mesh} {As a final test case, we consider again the Poisson problem with a constant diffusion coefficient but on an unstructured triangular mesh obtained via a Delaunay-based algorithm for which we report the subsequent refinements in Figure~\ref{fig:unstructuredmeshes}.}
\begin{figure}[htbp]
	\centering
	\includegraphics[width=\columnwidth]{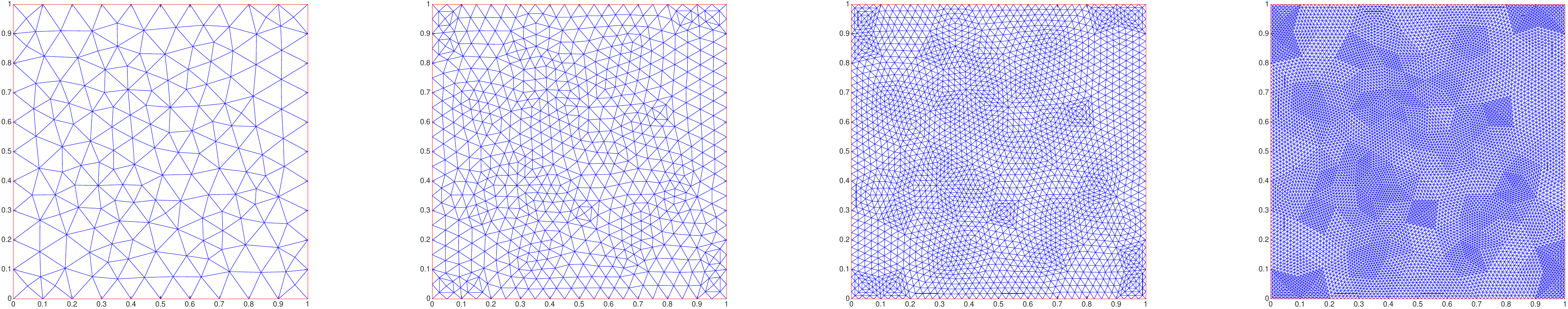}
	\caption{{Unstructured meshes for the Poisson problem, four levels of refinement using a Delaunay-based algorithm.}}
	\label{fig:unstructuredmeshes}
\end{figure}

\begin{figure}[htbp]
	\centering
	\subfloat[\texttt{HSL\_MC64} -- $\ell=1$]{\includegraphics[width=0.25\columnwidth]{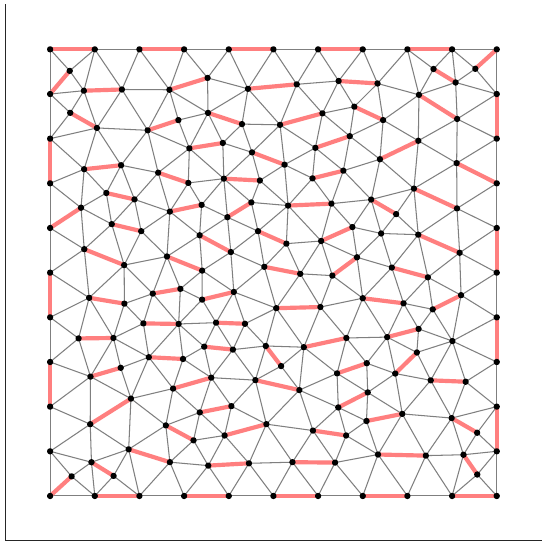}}\hfil
	\subfloat[\texttt{PREIS} -- $\ell=1$]{\includegraphics[width=0.25\columnwidth]{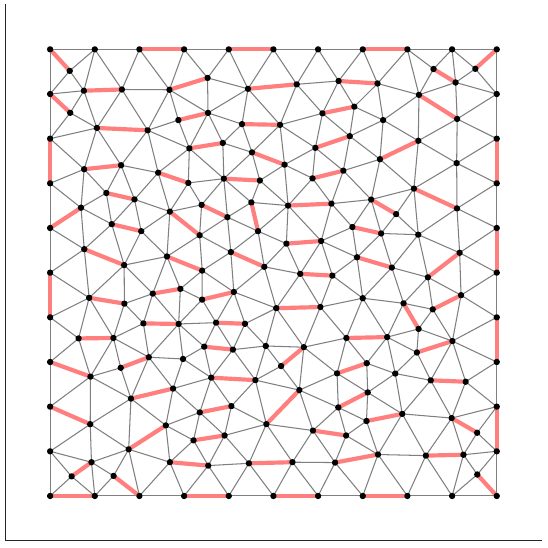}}\hfil
	\subfloat[\texttt{AUCTION} -- $\ell=1$]{\includegraphics[width=0.25\columnwidth]{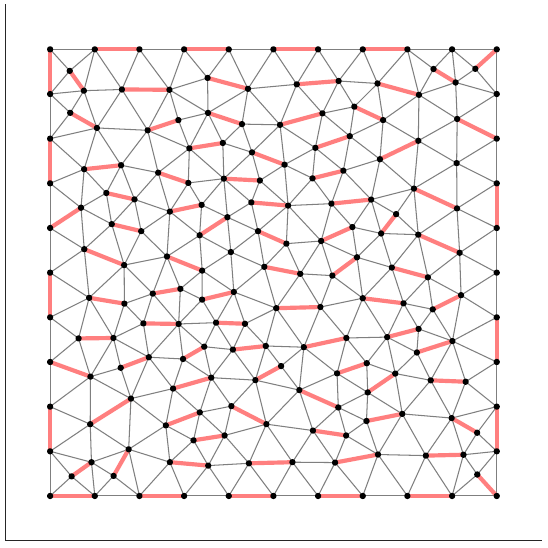}}\hfil
	\subfloat[\texttt{SUITOR} -- $\ell=1$]{\includegraphics[width=0.25\columnwidth]{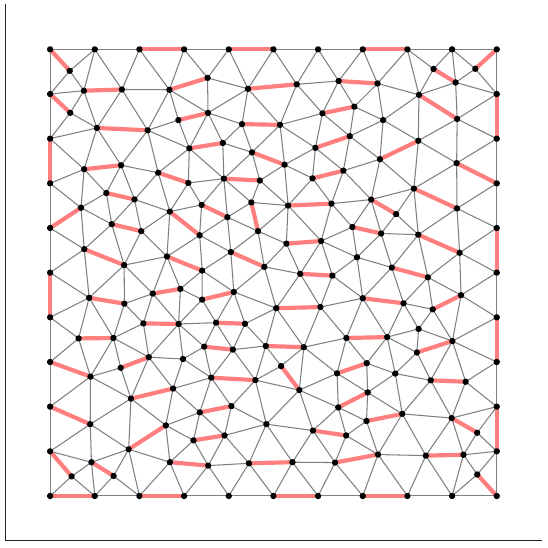}}
	\caption{{Diffusion problem with constant coefficients on an unstructured grid. Aggregates obtained with the weight vector $\mathbf{w}=(1,1,\ldots,1)^T$, and the different matching algorithms for $\ell=1$ pairwise matching steps.}}
	\label{fig:unstructured_aggregates}
\end{figure}
{The aggregates obtained for this test problem are depicted in Figure~\ref{fig:unstructured_aggregates}, whereas the  constants and bounds for $\ell=1$ step of matching are shown in Table~\ref{tab:unstructured_aggregates}. Again for this case we could not find an appropriate splitting to produce the local bound of Theorem~\ref{thm:our_convergence_result} when $\ell=2$ steps of pairwise matching were used.}
\begin{table}[htbp]
	\centering
	\caption{{Diffusion problem with constant coefficients on an unstructured grid. Comparison of the bound in Theorem~\ref{thm:our_convergence_result} with true value of $\mu_c$ in~\eqref{eq:finalcrate}. Aggregates obtained with the weight vector $\mathbf{w}=(1,1,\ldots,1)^T$, and the different matching algorithms for $\ell=1$ pairwise matching steps.}}
	\label{tab:unstructured_aggregates}
	\subfloat[\texttt{HSL\_MC64} -- exact matching]{
		\begin{tabular}{lcccc}

			dofs & bound & $\mu_c^{-1}$ \\
			\midrule
			185 & 3.000 & 1.613 \\
			697 & 3.000 & 1.562 \\
			2705 & 3.000 & 1.639 \\
			10657 & 3.000 & 1.897 \\
			
	\end{tabular}}\hfil
	\subfloat[\texttt{PREIS} -- $\frac{1}{2}$--approximate matching]{
		\begin{tabular}{lcccc}

			dofs & bound & $\mu_c^{-1}$ \\
			\midrule
			185 & 2.396 & 1.830 \\
			697 & 2.306 & 1.667 \\
			2705 & 2.258 & 2.157 \\
			10657 & 2.249 & 2.001 \\
	\end{tabular}}
	
	\subfloat[\texttt{AUCTION} -- $\frac{1}{2}$--~approximate matching]{
		\begin{tabular}{lcccc}

			dofs & bound & $\mu_c^{-1}$ \\
			\midrule
			185 & 3.000 & 1.583 \\
			697 & 3.000 & 1.596 \\
			2705 & 2.103 & 1.794 \\
			10657 & 2.106 & 1.759 \\
	\end{tabular}}\hfil
	\subfloat[\texttt{SUITOR} -- $\frac{1}{2}$--approximate matching]{
		\begin{tabular}{lcccc}

			dofs & bound & $\mu_c^{-1}$ \\
			\midrule
			185 & 2.695 & 1.686 \\
			697 & 2.484 & 1.645 \\
			2705 & 2.258 & 1.690 \\
			10657 & 2.249 & 1.893 \\
	\end{tabular}}
\end{table}
{If we compare the results in Table~\ref{tab:unstructured_aggregates} with the ones in Table~\ref{tab:hom_coeff_w_const}, then we observe that the quality of the aggregates, in this case, is analogous to the structured homogeneous case.  We also  observe that, again, the \texttt{AUCTION} algorithm manages to obtain aggregates with better quality than the ones obtained by all other algorithms, including the ones obtained by the exact matching algorithm. This is in agreement with the computational results discussed in~\cite{BootCMatch}.}

\subsection{Selecting the weight vector}\label{sec:selecting_weight_vectors}
We consider here the same test problems of the previous section, in which all the aggregates were computed by using the weight vector $\mathbf{w}=(1,1,\ldots,1)^T$, and compare them with the possible different choices for the weight vector discussed in Section~\ref{sec:selecting_the_weight}. In every case we compare the aggregates obtained by using as weight vector $\mathbf{w}$ either:
\begin{enumerate}
	\item a random initial guess, refined by some smoother iterations,
	\item the vector $\mathbf{w}=(1,1,\ldots,1)^T$, refined by some smoother iterations,
	\item the eigenvector associated with the smallest eigenvalue.
\end{enumerate}
Information on using the bootstrap procedure is contained in the Appendix Section~\ref{sec:thebootstrapprocedureresults}.

\subsubsection*{Random weight} We start considering the choice of an initial random weight vector $\mathbf{w}$ for all the test problems in Section~\ref{sec:selecting_the_weight}, and consider using as smoother for its refinement the $\ell_1$--Jacobi method~\cite{BFKY2011}; each refinement step, in this case, has a cost that is dominated by a diagonal scaling. We test the procedure for all the matching algorithms discussed in Section~\ref{sec:matching_algorithms}, but we visualize the attained aggregates only for \texttt{SUITOR}. From what we have seen in the previous section, the \texttt{SUITOR} matching algorithm consistently gives good results for all the problems, and is, from a computational point of view, the best candidate when looking for the parallel applicability of the AMG algorithms~\cite{BootCMatch}.
\begin{figure}[b]
	\centering
	\subfloat[Constant coefficient diffusion problem]{\includegraphics[width=\columnwidth]{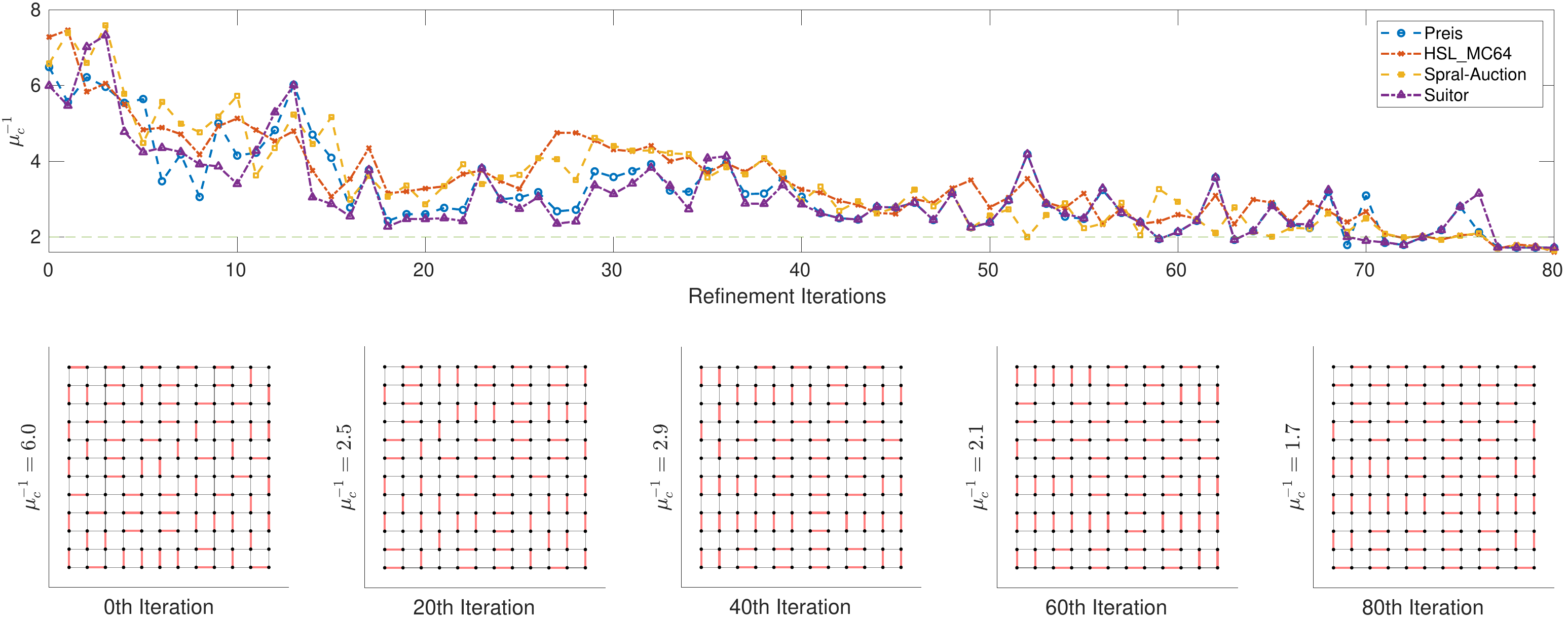}}
	
\end{figure}
\begin{figure}[t]
    \ContinuedFloat
    \centering
    \subfloat[Diffusion problem with $y$--axis oriented anisotropy $\varepsilon = 100$]{\includegraphics[width=\columnwidth]{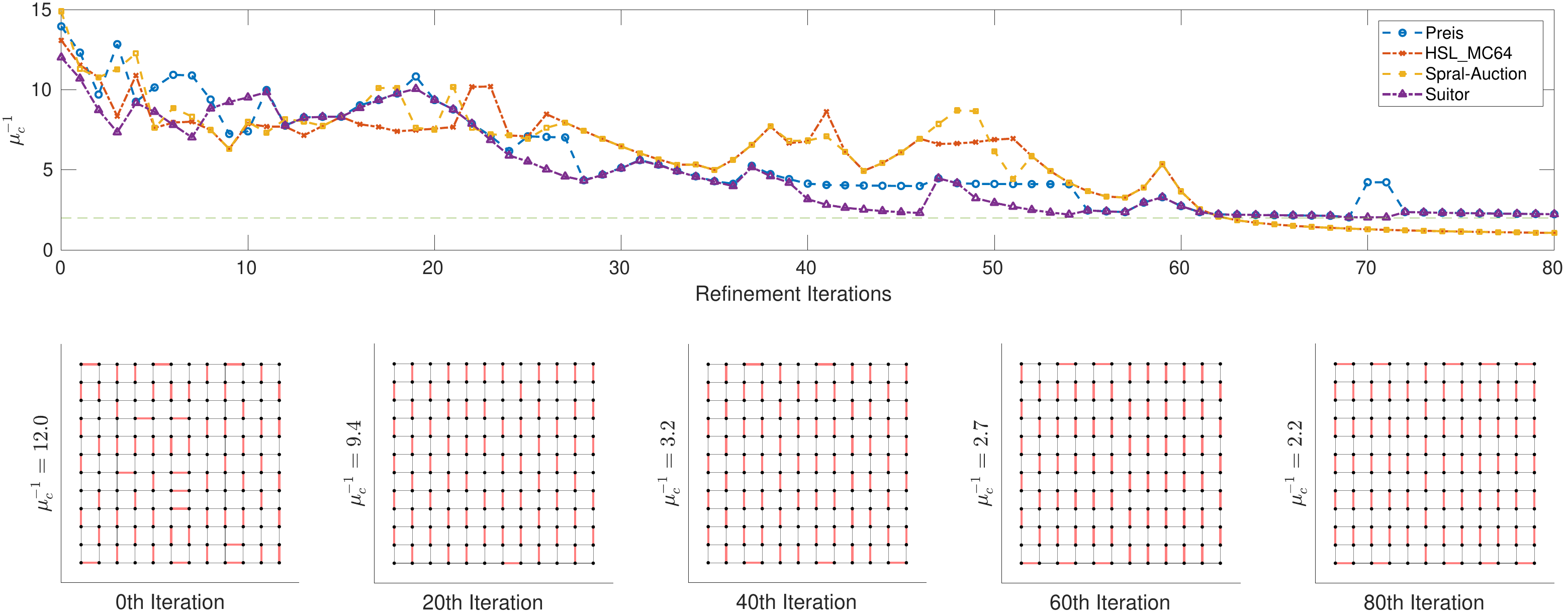}}

	\subfloat[{Constant coefficient diffusion problem on an unstructured grid}\label{fig:refinement_from_random_unstructured}]{\includegraphics[width=\columnwidth]{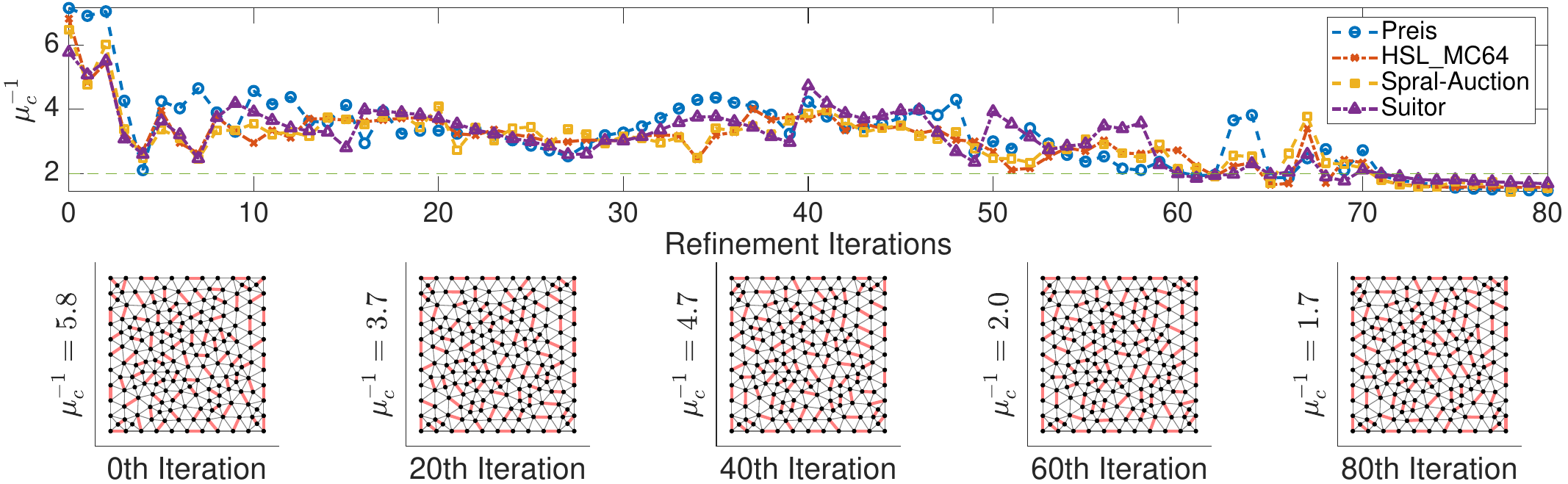}}
	
	\caption{Refinement of the weight vector starting from a random guess, and using the $\ell_1$--Jacobi smoother. We report a graph containing the $\mu_c^{-1}$ constant up to 80 refinement steps for a single sweep of pairwise aggregation. The depicted aggregates are the ones obtained with the \texttt{SUITOR} algorithm.}
	\label{fig:refinement_from_random}
\end{figure}
In Figure~\ref{fig:refinement_from_random} we report the results obtained; as we can observe, a random initial guess without any refinement is a very poor choice, and we need several refinement steps to obtain constants $\mu_c$ that are comparable with the ones we have seen in Section~\ref{sec:computing_the_muc_constant}. However, we can still go below the results obtained with the theoretical guess given by the constant weight vector $\mathbf{w}=(1,1,\ldots,1)^T$, at the cost of performing many refinement iterations. Note also that the aggregates for which these results are obtained would have been difficult to guess. 

We consider for this case also a Poisson problem with an axially rotated anisotropy of angle $\theta$ and modulus $\varepsilon$ on the same unstructured grid from Figure~\ref{fig:unstructuredmeshes}, that is, we consider the discretization of
	\begin{equation}\label{eq:rotatedpoisson}
	\begin{cases}
	- \nabla \cdot ( \mathbf{A} \nabla u ) = f, & (x,y) \in \Omega,\\
	u = 0, & (x,y) \in \partial\Omega,
	\end{cases} \qquad \mathbf{A} \in \mathbb{R}^{2\times 2}.
	\end{equation}
	Results for this test case are given in Figure~\ref{fig:aniso-poisson-refinement-from-random}.
\begin{figure}[htbp]
	\centering
	\subfloat[$\theta = \pi/6$, $\varepsilon = 1e-2$]{\includegraphics[width=0.95\columnwidth]{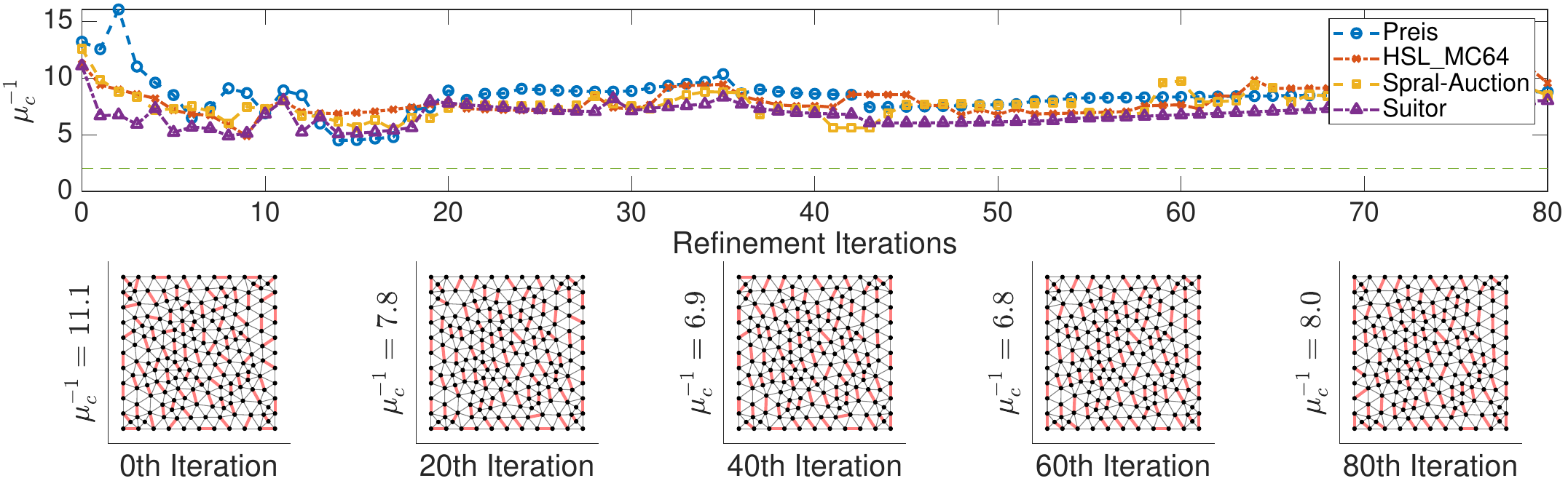}}
	
	\subfloat[$\theta = \pi/6$, $\varepsilon = 1e-3$]{\includegraphics[width=0.95\columnwidth]{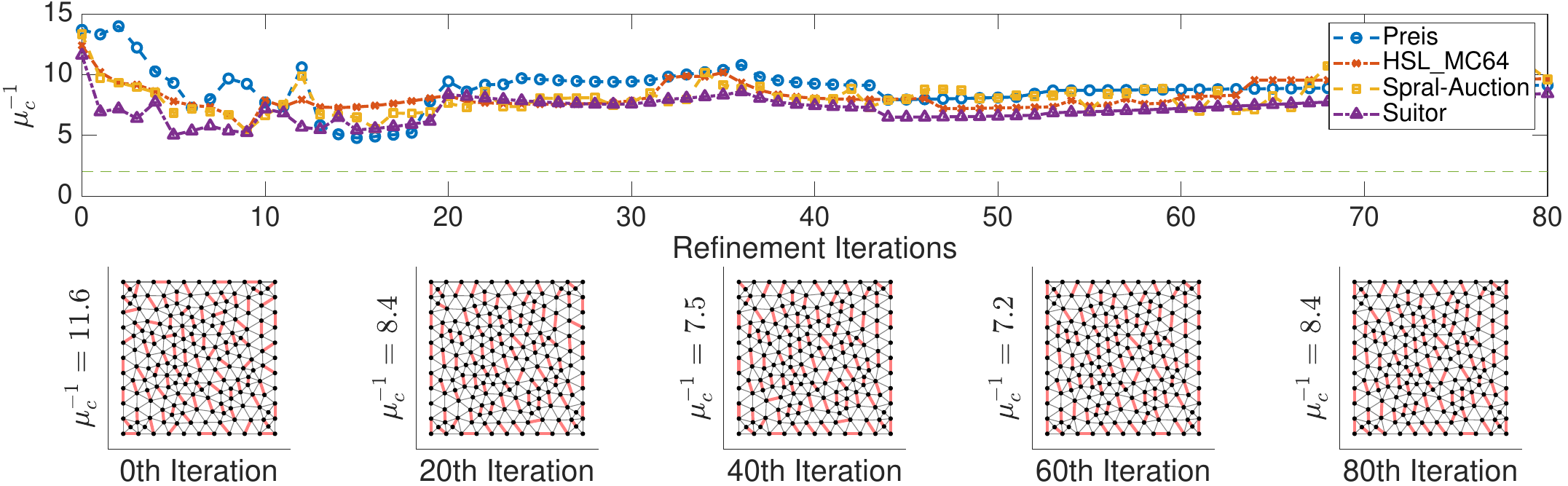}}
	
	\subfloat[$\theta = \pi/3$, $\varepsilon = 1e-2$]{{\includegraphics[width=0.95\columnwidth]{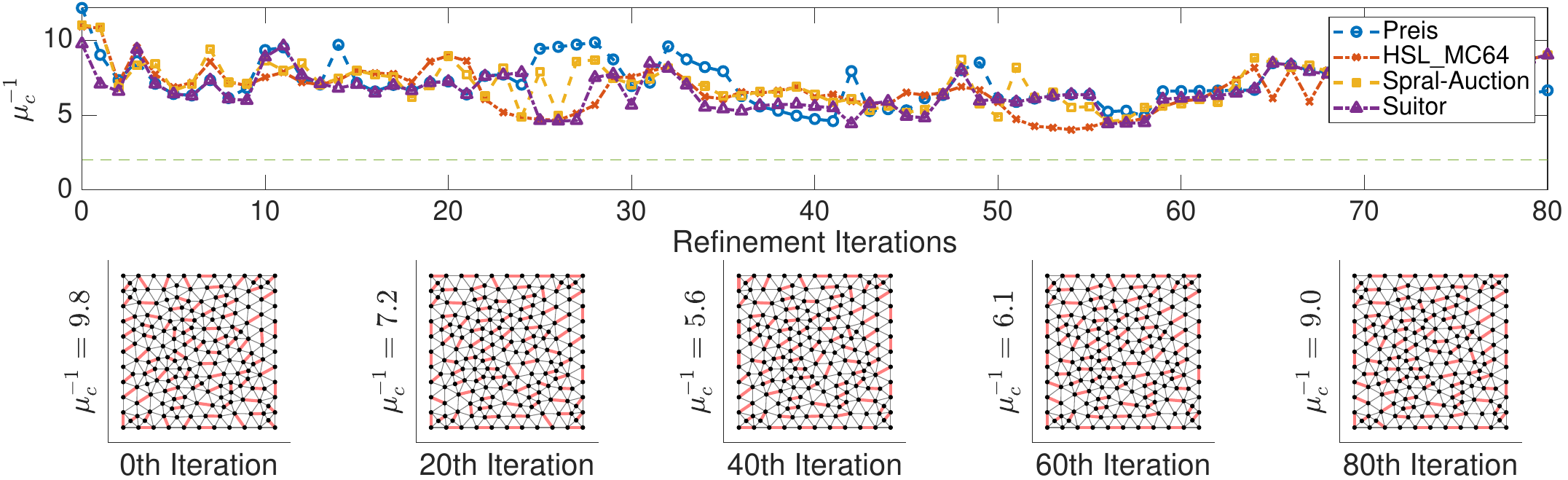}}}
	
	\subfloat[$\theta = \pi/3$, $\varepsilon = 1e-3$]{{\includegraphics[width=0.95\columnwidth]{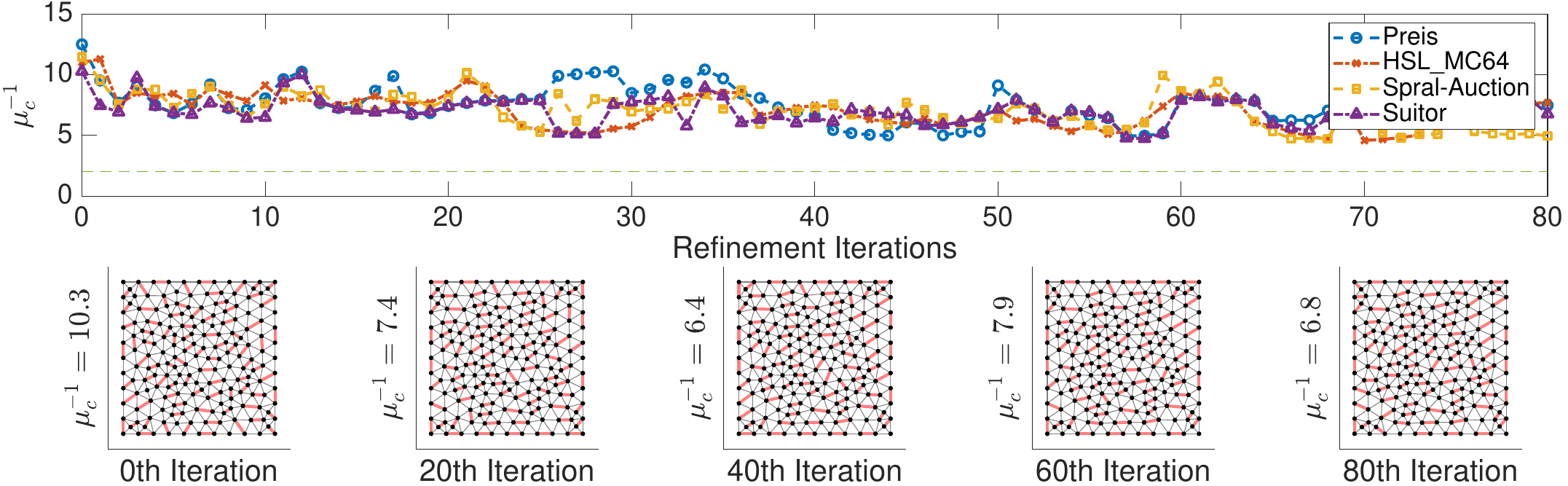}}}
	
	\caption{Poisson problem on an unstructured grid with rotated anisotropy of angle $\theta$, and modulus $\varepsilon$. Refinement of the weight vector starting from a random guess, and using the $\ell_1$--Jacobi smoother. We report a graph containing the $\mu_c^{-1}$ constant up to 80 refinement steps for a single sweep of pairwise aggregation. The depicted aggregates are the ones obtained with the \texttt{SUITOR} algorithm.}
	\label{fig:aniso-poisson-refinement-from-random}
\end{figure}

{If we compare these results with the one in Figure~\ref{fig:refinement_from_random_unstructured}, we observe that there is a moderate increase in the convergence constant for all combinations of rotation angle and modulus. Moreover, we can observe that over-refinement of the weight vector does not improve the overall quality of the aggregation procedure.}

\subsubsection*{Refined uniform weight} As we have seen from the previous set of examples, a sufficient number of refinement steps on a random weight vector $\mathbf{w}$ already improves the quality of the aggregates obtained through the matching algorithms. Therefore, we expect to obtain a similar result when we start from a more reasonable guess for the weight vector. We consider the same experimental setting and only  change the initial guess from a random $\mathbf{w}$ to the uniform vector $\mathbf{w} = (1,1,\ldots,1)^T$.
\begin{figure}[b]
	\centering
	\subfloat[Constant coefficient diffusion problem]{\includegraphics[width=\columnwidth]{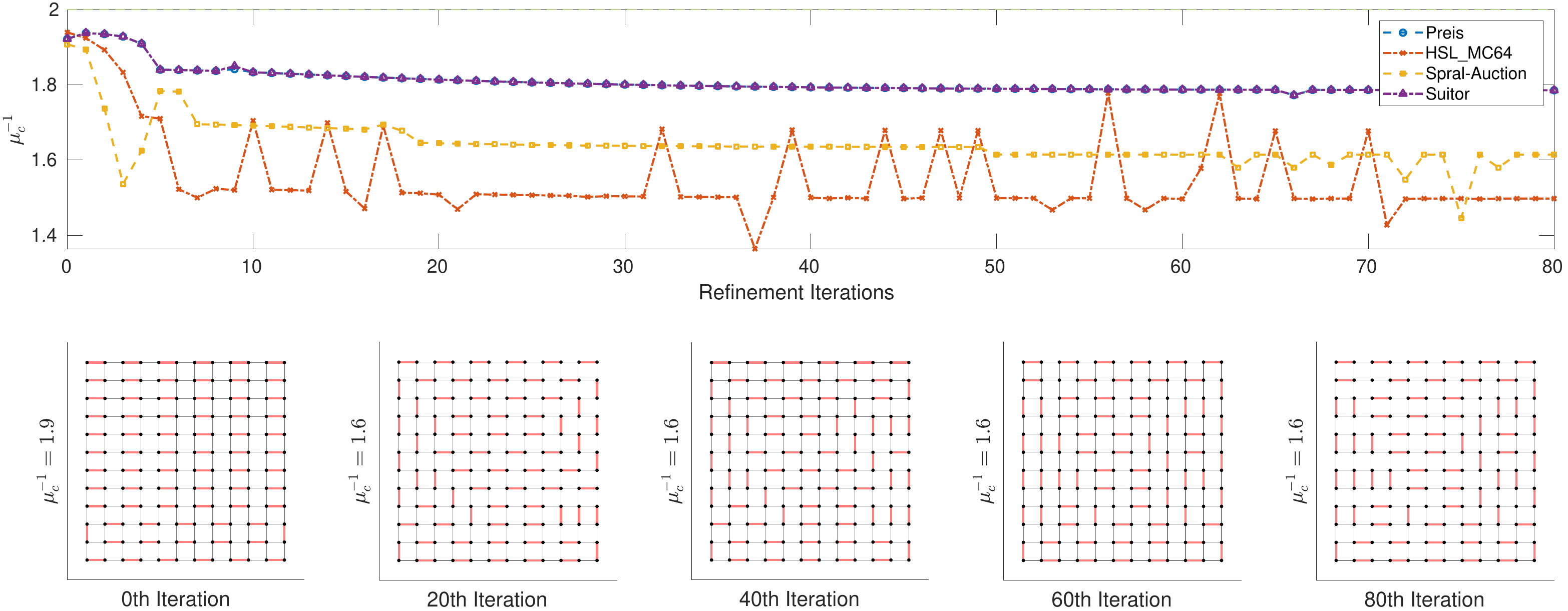}}
	
	\subfloat[Diffusion problem with $y$--axis oriented anisotropy $\varepsilon = 100$]{\includegraphics[width=\columnwidth]{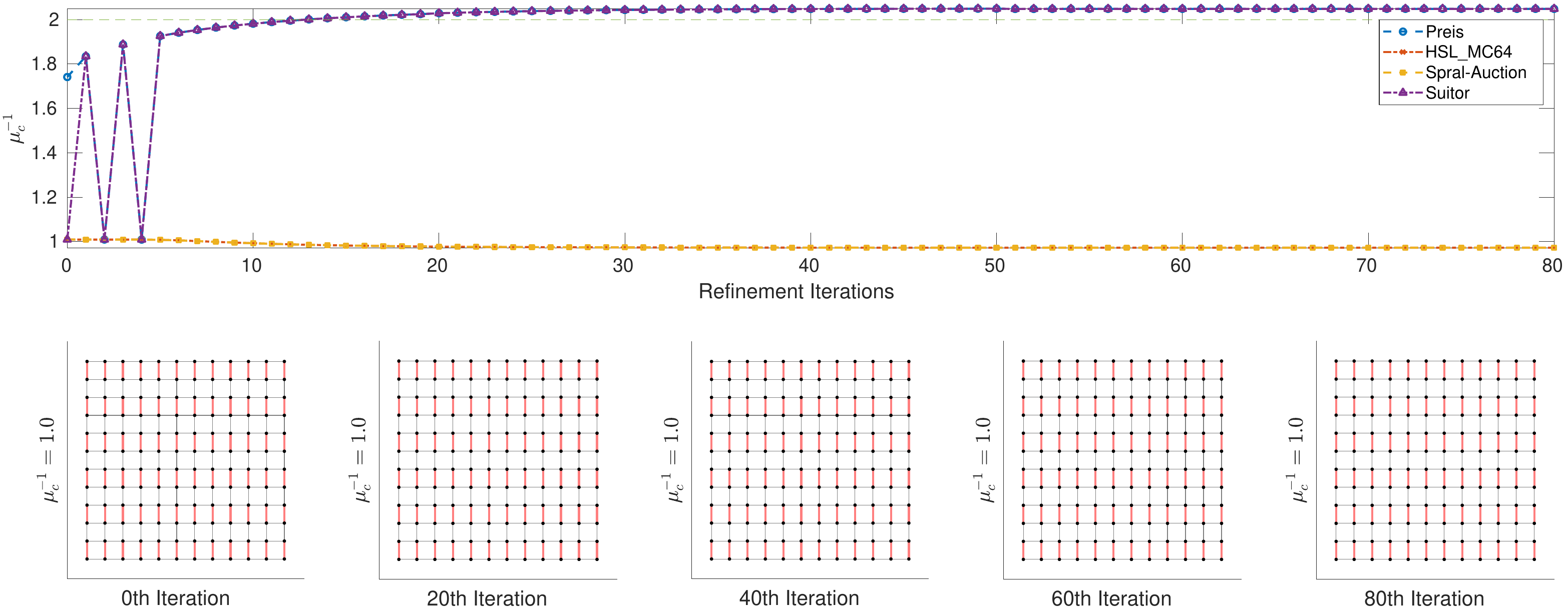}}
\end{figure}
\begin{figure}[t]
    \centering
    \ContinuedFloat

	\subfloat[{Constant coefficient diffusion problem on an unstructured grid}\label{fig:refinement_from_unit_unstructured}]{\includegraphics[width=\columnwidth]{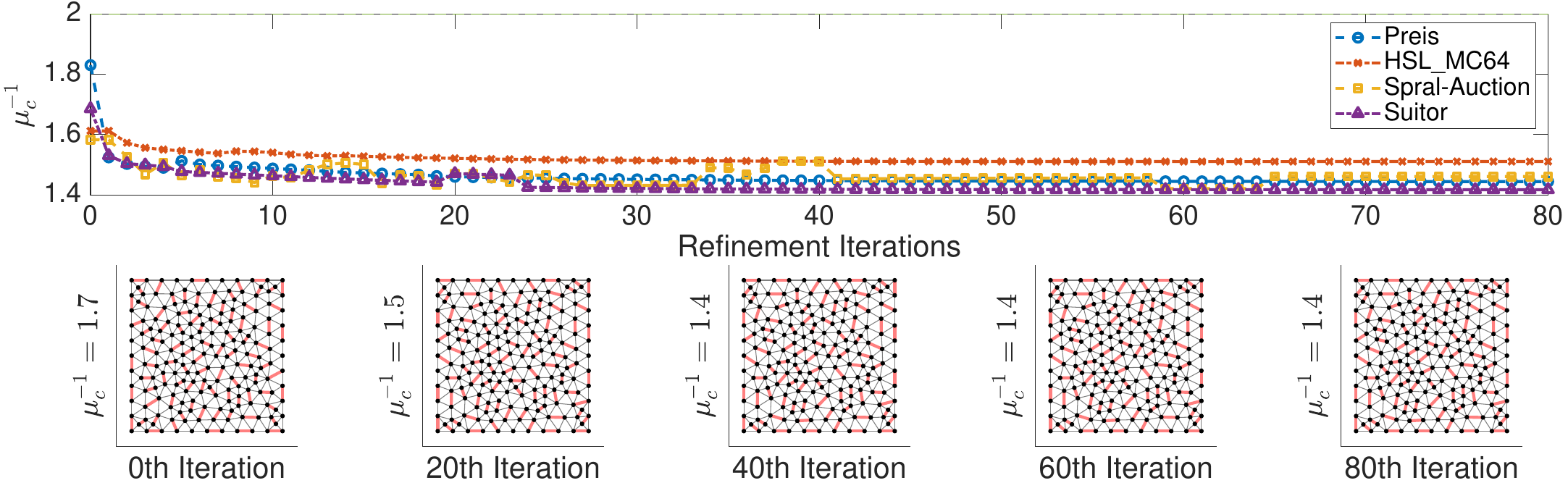}}
	
	\caption{Refinement of the weight vector starting from the all one guess, and using the $\ell_1$--Jacobi smoother. We report a graph containing the $\mu_c^{-1}$ constant up to 80 refinement steps for a single sweep of pairwise aggregation. The depicted aggregates are the ones obtained with the \texttt{AUCTION} algorithm.}
	\label{fig:refinement_from_unit}
\end{figure}
For this case, we plot in Figure~\ref{fig:refinement_from_unit}  the aggregates obtained with the \texttt{AUCTION} algorithm, which attains the best constants. What is interesting to notice in this case is that very few iterations of the smoother coupled with the \texttt{AUCTION} algorithm generate aggregates that are better than the ones obtained by the complete matching algorithm \texttt{HSL\_MC64}. The cases in which directionality in the coefficient is present end up in reproducing the expected aggregates with very few iterations.

As for the previous case, we consider again the Poisson problem on an unstructured mesh with rotated anisotropy from~\eqref{eq:rotatedpoisson}. Again, if we compare the results for this case in Figure~\ref{fig:aniso-poisson-refinement-from-allone} with the ones in Figure~\ref{fig:refinement_from_unit_unstructured} we observe that there is a decrease in the performance of the aggregation procedure. Nevertheless, a small number of refinement iterations brings the quality of the aggregates near to the one of the homogeneous case.
\begin{figure}[p]
	\centering
	\subfloat[$\theta = \pi/6$, $\varepsilon = 1e-2$]{\includegraphics[width=0.95\columnwidth]{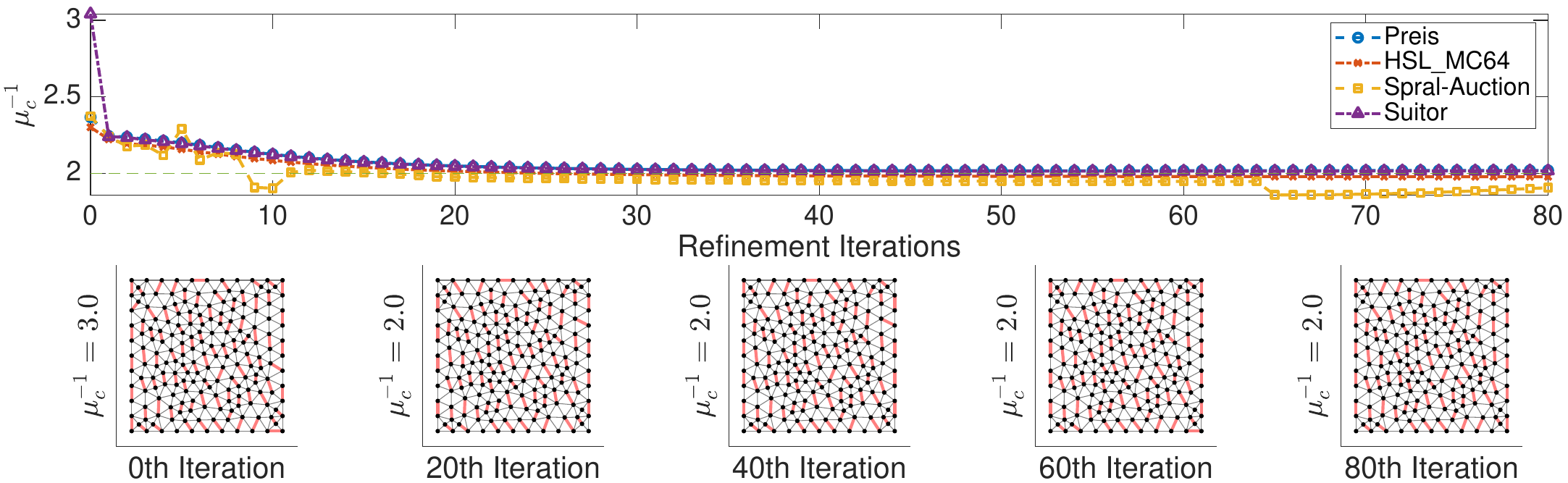}}
	
	\subfloat[$\theta = \pi/6$, $\varepsilon = 1e-3$]{\includegraphics[width=0.95\columnwidth]{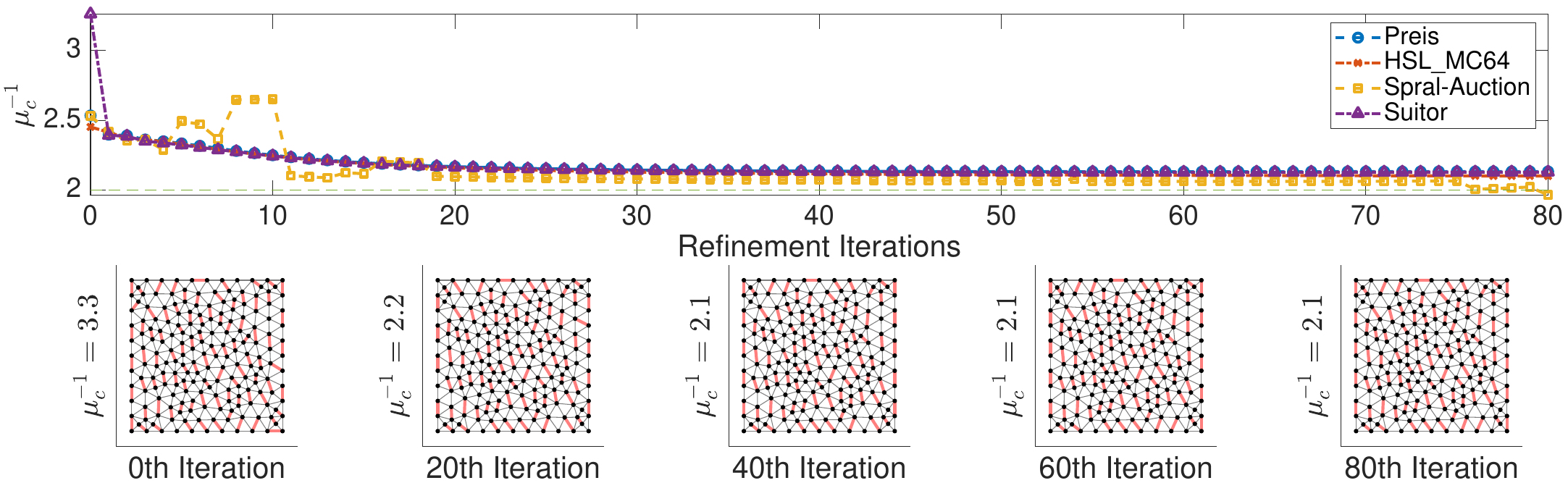}}

	\subfloat[$\theta = \pi/3$, $\varepsilon = 1e-2$]{{\includegraphics[width=0.95\columnwidth]{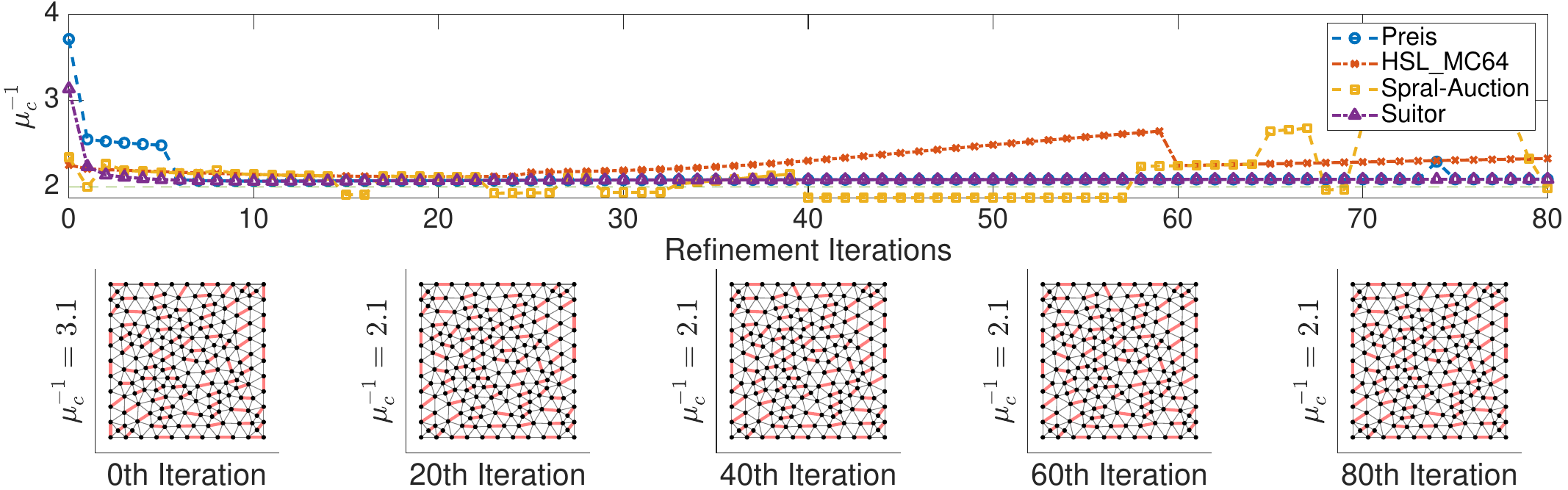}}}
	
	\subfloat[$\theta = \pi/3$, $\varepsilon = 1e-3$]{{\includegraphics[width=0.95\columnwidth]{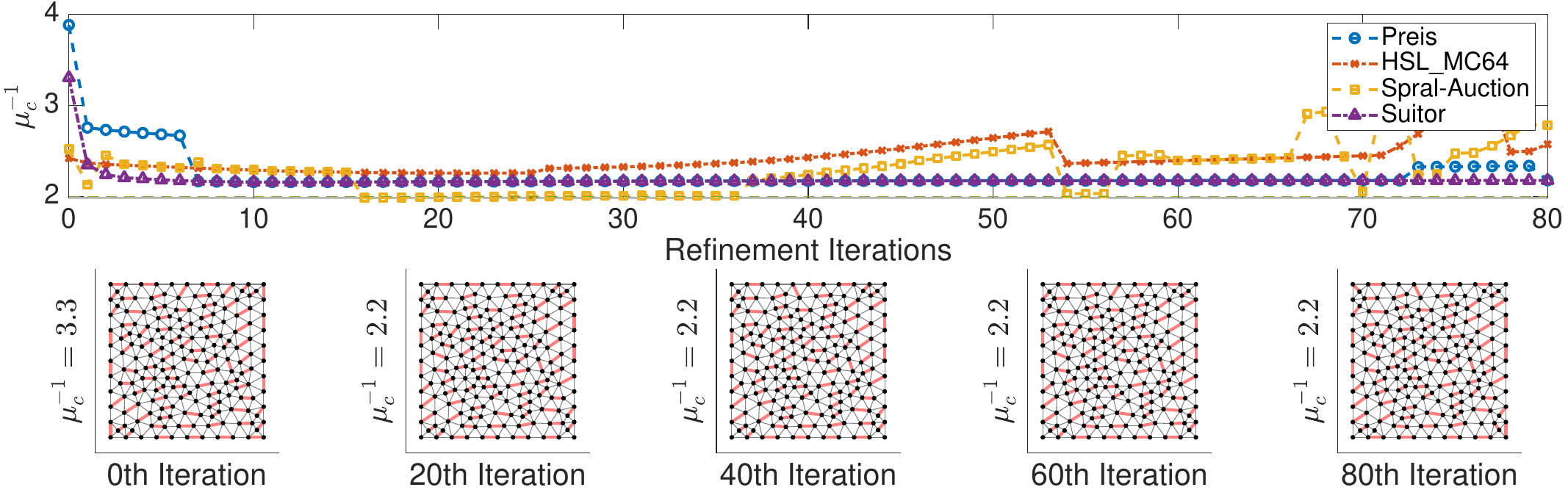}}}
	
	\caption{Poisson problem on an unstructured grid with rotated anisotropy of angle $\theta$, and modulus $\varepsilon$. Refinement of the weight vector starting from all one guess, and using the $\ell_1$--Jacobi smoother. We report a graph containing the $\mu_c^{-1}$ constant up to 80 refinement steps for a single sweep of pairwise aggregation. The depicted aggregates are the ones obtained with the \texttt{AUCTION} algorithm.}
	\label{fig:aniso-poisson-refinement-from-allone}
\end{figure}

\subsubsection*{The eigenvector weight} To complete our analysis  we consider the aggregates generated by using as weight vector $\mathbf{w}$ the eigenvector associated with the smallest eigenvalue as in Proposition~\ref{pro:at-least-is-convergent}. Since this is a theoretical test, we consider only the application of the full matching algorithm \texttt{HSL\_MC64}. We report the constants $\mu_c$ obtained by this choice in Table~\ref{tab:eigenvector_choice}.
\begin{table}[htb]
	\centering
	\caption{Constants $\mu_c^{-1}$ obtained by using as weight vector $\mathbf{w}$ the eigenvector relative to the smallest eigenvalue as suggested by Proposition~\ref{pro:at-least-is-convergent}.}
	\label{tab:eigenvector_choice}
	\begin{tabular}{ccccc}
		
		& \multicolumn{2}{c}{Homogeneous} & \multicolumn{2}{c}{$y$--axis} \\
		\cmidrule(l{2pt}r{2pt}){1-3}\cmidrule(l{2pt}r{2pt}){4-5}
		n  & $\ell = 1$ & $\ell = 2$ & $\ell = 1$ & $\ell = 2$ \\
		\cmidrule(l{2pt}r{2pt}){1-3}\cmidrule(l{2pt}r{2pt}){4-5}
		12 & 1.476 & 2.336 & 0.973 & 2.699 \\
		24 & 1.737 & 3.826 & 1.001 & 3.249  \\
		48 & 1.809 & 4.274 & 1.008 & 3.401  \\
		96 & 1.808 & 4.854 & 1.009 & 3.437 \\
	\end{tabular}
	\begin{tabular}{ccc}
		\multicolumn{3}{c}{Homogeneous unstructured} \\
		\cmidrule(l{2pt}r{2pt}){1-3}
		dofs & $\ell = 1$ & $\ell = 2$ \\
		\cmidrule(l{2pt}r{2pt}){1-3}
		185 & 1.5076& 2.1977\\
		697 & 1.5184& 2.7255\\
		2705 & 1.6349& 3.1663\\
		10657 & 1.7281& 4.0177\\
	\end{tabular}
	
\end{table}
If we compare them with the results in Tables~\ref{tab:hom_coeff_w_const},~\ref{tab:yanisotropy_poisson_aggregates}, and~\ref{tab:jumping_poisson_aggregates} we observe two different behaviors. In the case of the simpler homogeneous problem selecting the eigenvector makes for worse $\mu_c$ constants when $\ell = 2$ steps of pairwise aggregations are used with respect to the case in which the vector $\mathbf{w} = (1,1,\ldots,1)^T$ is used in Table~\ref{tab:hom_coeff_w_const}. If we look at the aggregates obtained by this choice in Figure~\ref{fig:eigenvector_choice_hom_coeff} and compare them with the one in Figure~\ref{fig:homogeneous_poisson_aggregates}, we see that the new aggregates are very far from the \emph{box} aggregates obtained in that case, this causes that for certain aggregates we get an $M$--matrix $A_k$ 
\begin{equation*}
A_k = \begin{bmatrix}
4 & & -1 & \\
& 4 & -1 & -1 \\
-1 & -1 & 4 &  \\
& -1 &  & 4
\end{bmatrix}, \qquad D_k = \begin{bmatrix}
4 \\ & 4 \\ & & 4 \\ & & & 4
\end{bmatrix},
\end{equation*}
whose scaled version $D_k^{-1} A_k$ is not a matrix with constant row sum. Therefore  the associated $\mathbf{w}_{e_k}$ is not an eigenvector, i.e., we get a $\mu_k$ constant that is intermediate between $\lambda_1$ and $\lambda_2$, as discussed in Theorem~\ref{thm:our_convergence_result}. On the other hand, the constant vector choice always provides an irreducible and diagonally dominant $M$--matrix $D_k^{-1} A_k$, hence the vector $\mathbf{w}_{e_k} = (1,1,1,1)^T$ is the unique eigenvector associated with the smallest eigenvalue, thus we obtain a better constant. 
\begin{figure}[htbp]
	\centering
	\subfloat[Constant coefficient diffusion problem\label{fig:eigenvector_choice_hom_coeff}]{
		\includegraphics[width=0.21\columnwidth]{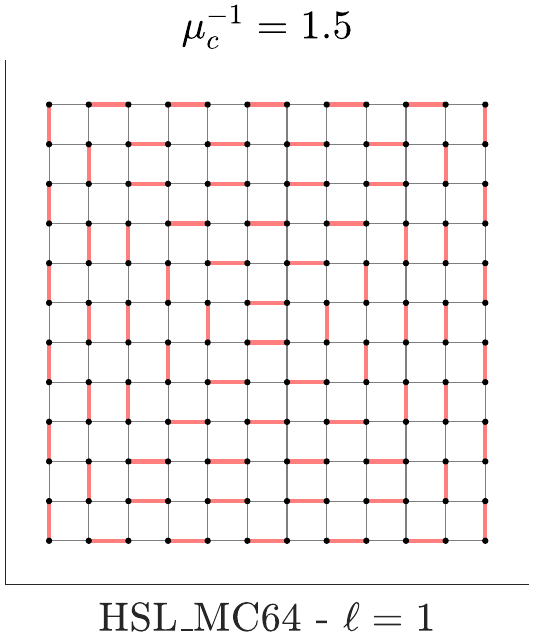}
		\includegraphics[width=0.21\columnwidth]{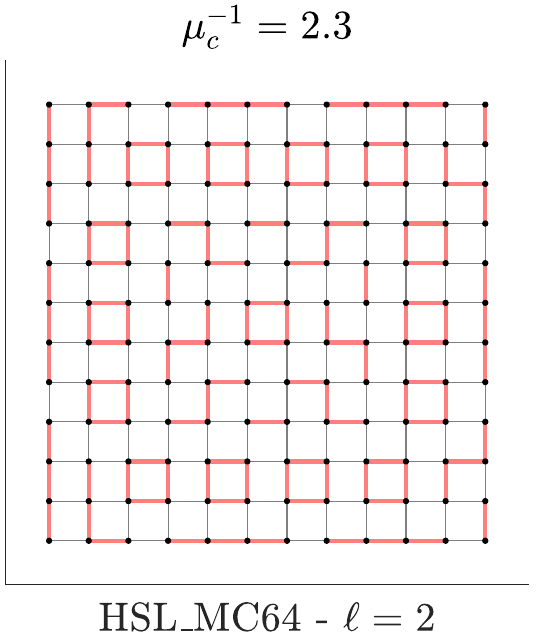}
	}\hfil
	\subfloat[Diffusion problem with $y$--axis oriented anisotropy $\varepsilon = 100$]{
		\includegraphics[width=0.21\columnwidth]{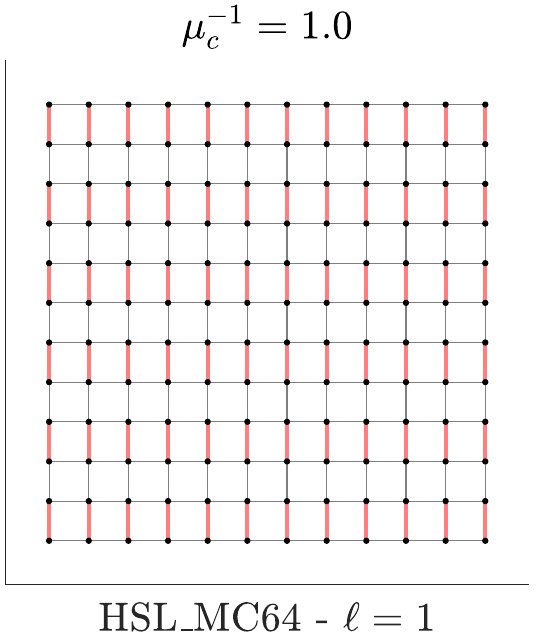}
		\includegraphics[width=0.21\columnwidth]{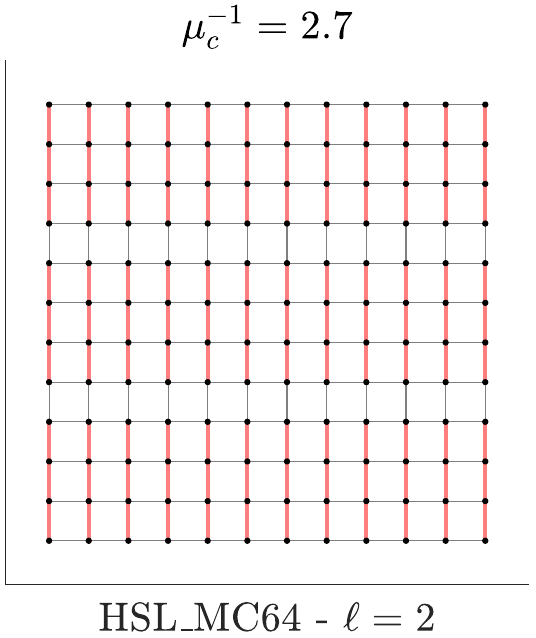}
	}
	
	\caption{Aggregates obtained by using as weight vector $\mathbf{w}$ the eigenvector associated with the smallest eigenvalue as suggested by Proposition~\ref{pro:at-least-is-convergent}}
	\label{fig:eigenvector_choice}
\end{figure}
Focusing now on the other cases in Table~\ref{tab:eigenvector_choice}, whose aggregates are also depicted in Figure~\ref{fig:eigenvector_choice}, we obtain nearly the same results with the exception of the piecewise regular coefficients in which we are able to improve the attained constants -- observe also that they are near the one obtained with the \texttt{SUITOR} algorithm and the $\mathbf{w} = (1,\ldots,1)^T$ vector, even if the aggregates are very different.

What we can conclude from testing the usage of the eigenvector associated with the smallest eigenvalue is that, although guaranteeing the convergence due to Proposition~\ref{pro:at-least-is-convergent}, it can generate sub-optimal aggregates. On the other hand, either selecting a vector knowing the structure of the matrices $\{A_k\}_k$, as in the constant coefficient case with the $\mathbf{w}=(1,1,\ldots,1)^T$ vector or refining a choice by means of the smoothing procedure, can yield better results as we have seen. 

\section{Quality of the aggregates and the compatible relaxation principle}
\label{sec:compatiblerel}

As already mentioned in Section~\ref{sec:intro}, the need to measure the quality of a coarse space and to set up a general procedure for coarsening of
the widest range of linear systems led to the nice principle of {\em compatible relaxation}. After its introduction in~\cite{BrandtCompRelax}, it has been widely analyzed and related to the general theories for AMG convergence in many papers, starting from~\cite{FalgoutVassilevskyMeasure}. This principle has been applied as a guideline to define the coarsening method described in this paper, as emphasized in the original papers~\cite{DV2013,BootCMatch}. In the following, we show that the results obtained by the quality measure discussed in this paper are in good agreement with a quality measure based on the convergence rate of a compatible relaxation, showing the coherence of the convergence theories. Main advantage in using the constant $\mu_c$ in~\eqref{eq:convergence_rate_bound} is that it does not depend on a selected smoother and often gives more accurate information on the quality of the coarse space, as also shown in some of our experiments. Furthermore, we observe that the setup of a compatible relaxation scheme requires to build in an explicit way the complementary space to the coarse space, as explained in the following.

To introduce the measure based on compatible relaxation, we need to define the following $2\times 2$--block factorization
\begin{equation}\label{eq:blockfactorization}
\begin{bmatrix}
P_f^T \\P^T
\end{bmatrix} A \begin{bmatrix}
P_f & P
\end{bmatrix} = \begin{bmatrix}
A_{ff} & A_{fc} \\
A_{cf} & A_{cc}
\end{bmatrix}, \qquad \text{ for } P^T D P_f = 0,
\end{equation}
where $\Range(P_f)$ is the space in which the smoother should be effective; this  can be used to obtain a decomposition of the whole $\mathbb{R}^n$ since for all $\mathbf{e}\in\mathbb{R}^n$ we have $\mathbf{e} = P_f \mathbf{e}_f + P \mathbf{e}_c$. Exploiting the observation in Remark~\ref{rmk:dorthognonality}, we can express the matrix $P_f$ through the block factorization~\eqref{eq:blockfactorization} in a straightforward way as
\begin{equation*}
P_f = \begin{bmatrix}
\tilde{P}_f \\
0
\end{bmatrix} \in \mathbb{R}^{n \times n_p}, \text{ where } \tilde{P}_f = [\mathbf{p}_1^f,\ldots,\mathbf{p}_{n_p}^f] \text{ for }  \mathbf{p}_j^f = \Pi_j \mathbf{w}_{e_{i\mapsto j}}^\perp.
\end{equation*}
By this construction, each relaxation scheme that is well defined for the block $A_{ff}$ is then a \emph{compatible relaxation}, i.e., a scheme that
keeps the values of the coarse variables intact, and therefore makes the smoothing and coarse correction operators work each on the appropriate subspaces.

To validate numerically this claim we then look at the convergence radius $\rho(\cdot)$ of the iterative method induced by the restriction of the $\ell_1$--Jacobi global smoother on the matrix $A_{ff}$ in~\eqref{eq:blockfactorization}, i.e., we look at
\begin{equation}\label{eq:cr_convratio}
\rho_f = \rho( I - M_{ff}^{-1} A_{ff}) <  1, \quad M_{ff} = P_f^T M P_f, \qquad A_{ff} = P_f^T A P_f,
\end{equation}
where $M$ is the iteration matrix of the $\ell_1$--Jacobi global smoother for $A$. In Table~\ref{tab:convergence_ratio} we report the value of $\rho_f$ for each combination of test problem and matching algorithm, while setting the weight vector $\mathbf{w} = (1,1,\ldots,1)^T$, and the number of matching steps to $\ell = 1$. 
\begin{table}[htbp]
	\centering
	\caption{Convergence ratio $\rho_f$ of the compatible relaxation scheme~\eqref{eq:cr_convratio} for all the test problems. The coarse space is built from a single step of all the matching algorithms from Section~\ref{sec:matching_algorithms} with weight vector choice $\mathbf{w} = (1,1,\ldots,1)^T$, and no refinement iterations.}
	\label{tab:convergence_ratio} 
	\subfloat[Constant coefficient diffusion problem]{
		\begin{tabular}{ccccc}
			
			$n$   & \texttt{HSL\_MC64} &\texttt{PREIS} & \texttt{AUCTION} & \texttt{SUITOR} \\
			\cmidrule(l{2pt}r{2pt}){1-1}\cmidrule(l{2pt}r{2pt}){2-2}\cmidrule(l{2pt}r{2pt}){3-3}\cmidrule(l{2pt}r{2pt}){4-4}\cmidrule(l{2pt}r{2pt}){5-5} 
			12 & 0.766 & 0.794 & 0.755 & 0.794 \\
			24 & 0.816 & 0.824 & 0.805 & 0.824 \\
			48 & 0.826 & 0.831 & 0.829 & 0.832 \\
			96 & 0.832 & 0.833 & 0.832 & 0.833 \\
			
	\end{tabular}}\hfil
	\subfloat[Diffusion problem with $y$--axis oriented anisotropy $\varepsilon = 100$]{
		\begin{tabular}{ccccc}
			
			$n$   & \texttt{HSL\_MC64} &\texttt{PREIS} & \texttt{AUCTION} & \texttt{SUITOR} \\
			\cmidrule(l{2pt}r{2pt}){1-1}\cmidrule(l{2pt}r{2pt}){2-2}\cmidrule(l{2pt}r{2pt}){3-3}\cmidrule(l{2pt}r{2pt}){4-4}\cmidrule(l{2pt}r{2pt}){5-5} 
			12 & 0.969 & 0.963 & 0.978 & 0.963 \\
			24 & 0.986 & 0.986 & 0.989 & 0.986 \\
			48 & 0.993 & 0.777 & 0.994 & 0.871 \\
			96 & 0.996 & 0.994 & 0.996 & 0.994 \\
			
	\end{tabular}}
	
	\subfloat[][{Rotated anisotropy $\theta = \pi/6$ and $\varepsilon=100$ on an}\\ {unstructured grid}]{
		\begin{tabular}{ccccc}
			dofs   & \texttt{HSL\_MC64} &\texttt{PREIS} & \texttt{AUCTION} & \texttt{SUITOR} \\
			\cmidrule(l{2pt}r{2pt}){1-1}\cmidrule(l{2pt}r{2pt}){2-2}\cmidrule(l{2pt}r{2pt}){3-3}\cmidrule(l{2pt}r{2pt}){4-4}\cmidrule(l{2pt}r{2pt}){5-5} 	
			185 & 0.803 & 0.802 & 0.796 & 0.799 \\
			697 & 0.831 & 0.846 & 0.811 & 0.843 \\
			2705 & 0.851 & 0.863 & 0.862 & 0.854 \\
			10657 & 0.882 & 0.917 & 0.873 & 0.869 \\
	\end{tabular}}

	\subfloat[{Constant coefficient problem on an unstructured grid}]{
		\begin{tabular}{ccccc}
			
			dofs   & \texttt{HSL\_MC64} &\texttt{PREIS} & \texttt{AUCTION} & \texttt{SUITOR} \\
			\cmidrule(l{2pt}r{2pt}){1-1}\cmidrule(l{2pt}r{2pt}){2-2}\cmidrule(l{2pt}r{2pt}){3-3}\cmidrule(l{2pt}r{2pt}){4-4}\cmidrule(l{2pt}r{2pt}){5-5} 
			185 & 0.723 & 0.756 & 0.729 & 0.725 \\
			697 & 0.735 & 0.750 & 0.754 & 0.743 \\
			2705 & 0.746 & 0.788 & 0.770 & 0.768 \\
			10657 & 0.785 & 0.794 & 0.775 & 0.800 \\
	\end{tabular}}

\end{table}

If we compare the  constants obtained here with the ones in the columns for $\ell=1$ in the Tables~\ref{tab:hom_coeff_w_const}, \ref{tab:yanisotropy_poisson_aggregates}
and~\ref{tab:unstructured_aggregates}, we observe that the value of the $\rho_f$ constants behaves consistently with quality measure $\mu_c$ within the same experiment, while it is harder to use it to compare among the aggregates for different test cases. {This is specifically true  for the case of the unstructured mesh, where, even if the quality of the aggregates seem to be degraded with respect to the corresponding finite difference case, the convergence ratio of the compatible relaxation is only mildly affected.}

\section{Conclusions}
\label{sec:concl}

This paper has presented some theoretical results which complement the available computational evidence on the convergence properties of the {\em coarsening based on compatible weighted matching}. This is a purely algebraic and automatic procedure, exploiting unsmoothed aggregation for coarsening of general SPD matrices in AMG, introduced in~\cite{DV2013,BootCMatch}. We have shown that the necessary conditions for convergence of AMG, as stated in~\cite{XuZikatanovReview}, are satisfied. Furthermore, we {used} the theory 
to have a quality measure of aggregates which we used as {\em a posteriori} guideline to analyze the effectiveness of different edge weights and maximum weight matching algorithms exploited in the coarsening procedure. We have applied the theory to different test cases arising from scalar elliptic PDEs, and we have shown that the good quality of the coarsening procedure is preserved in the case of using sub-optimal algorithms for computing maximum weight matching and that it appears also insensitive to anisotropy and discontinuities in the coefficients of the considered test cases.

\backmatter

\section*{Declarations}

\subsection*{Funding}

The research leading to these results received funding from Horizon 2020 Project  ``Energy oriented Centre of Excellence: toward exascale for energy'' (EoCoE--II), Project ID: 824158. The first three authors are members of the INdAM--GNCS research group.

\subsection*{Conflict of interest}

All authors certify that they have no affiliations with or involvement in any organization or entity with any financial interest or non-financial interest in the subject matter or materials discussed in this manuscript.

\subsection*{Data availability}

The datasets generated during and/or analysed during the current study are available in the GitHub repository, \url{https://github.com/bootcmatch/BootCMatch}.

\begin{appendices}

\section{Additional experiments}\label{secA1:additional_experiments}

To make the main text easier to read, we have collected here some additional experiments on different test cases. The construction of the section mirrors that of Section~\ref{sec:numerical_experiments}. We further discuss here the use of the \emph{bootstrap procedure} for the selection of the weight vector $\mathbf{w}$ in Section~\ref{sec:thebootstrapprocedureresults}.

\subsection{Computing the \texorpdfstring{$\mu_c$}{muc} constant}\label{sec:appendix_computing_muc}

We collect here other test cases for which we have computed the $\mu_c$ constant within the same framework of Section~\ref{sec:computing_the_muc_constant}. That is, we consider again the Poisson problem~\eqref{eq:thediffusionproblem} but with different coefficients. Specifically, Section~\ref{sec:diffusion_with_jumping_coefficients} discusses the case of a discontinuous diffusion coefficient, while Section~\ref{sec:random_coefficients_appendix} is about the case of a diffusion coefficient sampled from a random distribution.

\subsubsection{Diffusion with jumps in the coefficients}\label{sec:diffusion_with_jumping_coefficients}
{We consider now} the case in which the diffusion coefficient is only piece-wise regular, exhibiting jumps between two values in the $\Omega = [0,1]^2$ domain. For these kind of problems there is some numerical evidence showcasing the efficiency of aggregation-based AMG methods~\cite{N2010,NN2011}, yet it is interesting to evaluate how a fully algebraic and unsmoothed aggregation procedure behaves. For our test, we consider the case
\begin{equation*}
a(x,y) = \begin{cases}
3, & x > \nicefrac{1}{2} \text{ and } y > \nicefrac{1}{2},\\
1, & \text{otherwise}.
\end{cases}
\end{equation*}
We report the aggregates obtained for this test problem in Figure~\ref{fig:jumping_poisson_aggregates}.
\begin{figure}[htbp]
	\centering
	\subfloat[\texttt{HSL\_MC64} -- $\ell=1$]{\includegraphics[width=0.25\columnwidth]{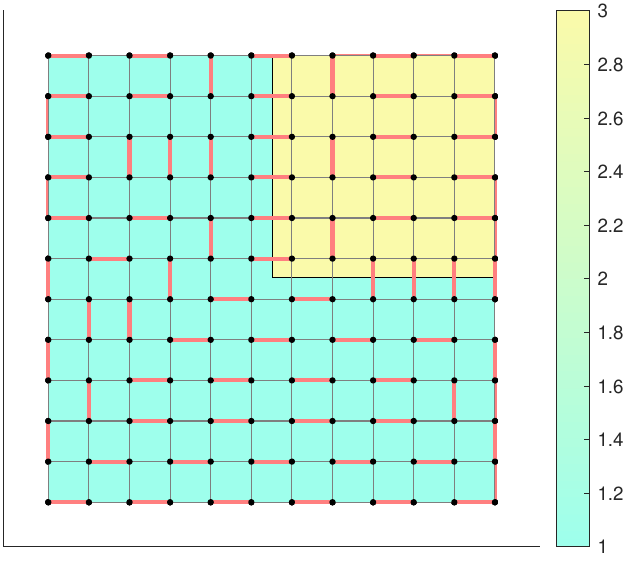}}
	\subfloat[\texttt{HSL\_MC64} -- $\ell=2$]{\includegraphics[width=0.25\columnwidth]{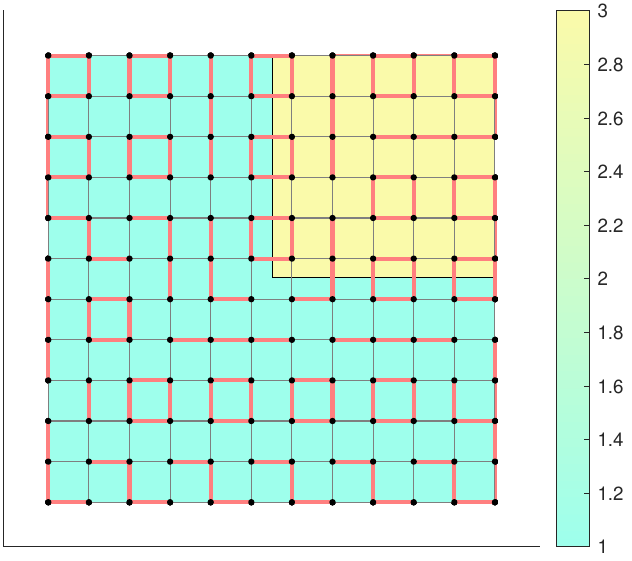}}
	\subfloat[\texttt{PREIS} -- $\ell=1$]{\includegraphics[width=0.25\columnwidth]{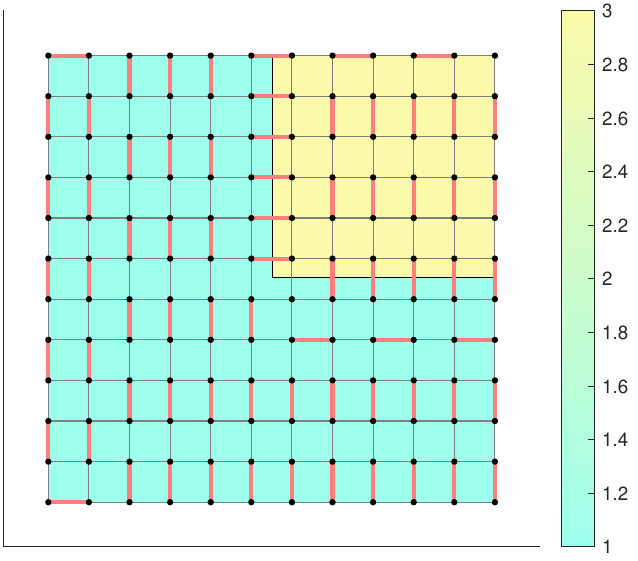}}
	\subfloat[\texttt{PREIS} -- $\ell=2$]{\includegraphics[width=0.25\columnwidth]{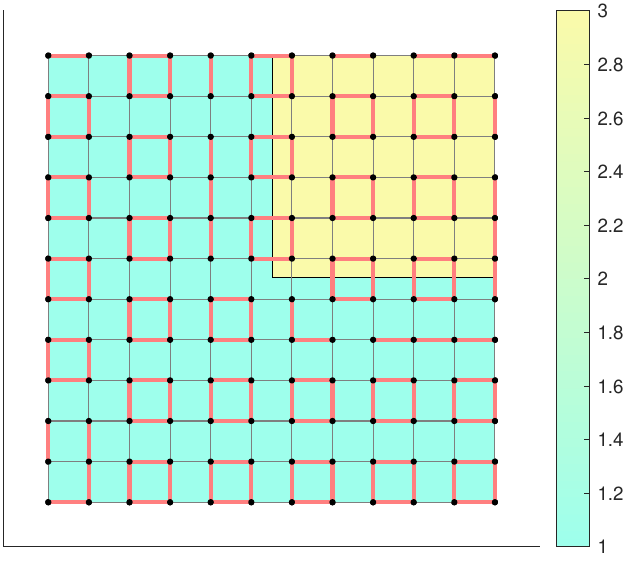}}
	
	\subfloat[\texttt{AUCTION} -- $\ell=1$]{\includegraphics[width=0.25\columnwidth]{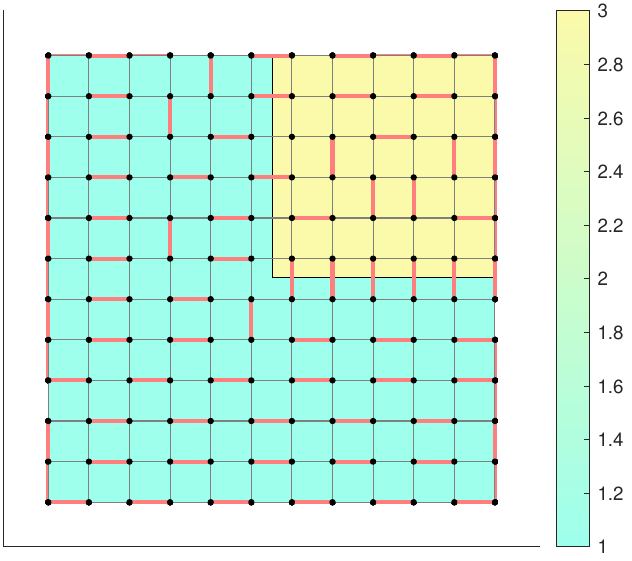}}
	\subfloat[\texttt{AUCTION} -- $\ell=2$]{\includegraphics[width=0.25\columnwidth]{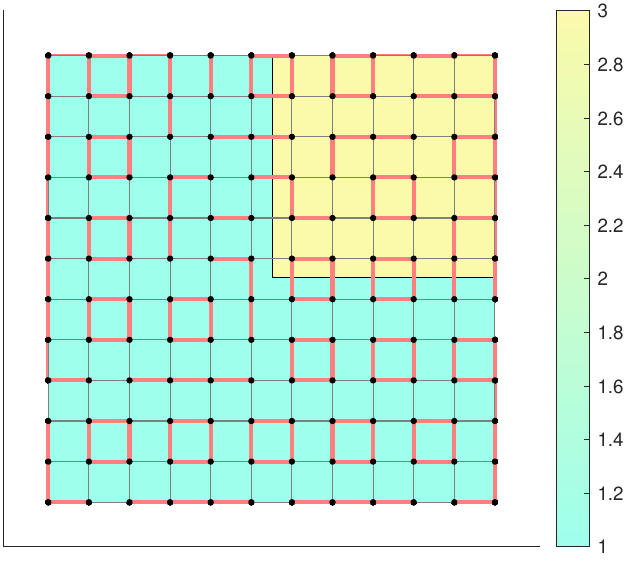}}
	\subfloat[\texttt{SUITOR} -- $\ell=1$]{\includegraphics[width=0.25\columnwidth]{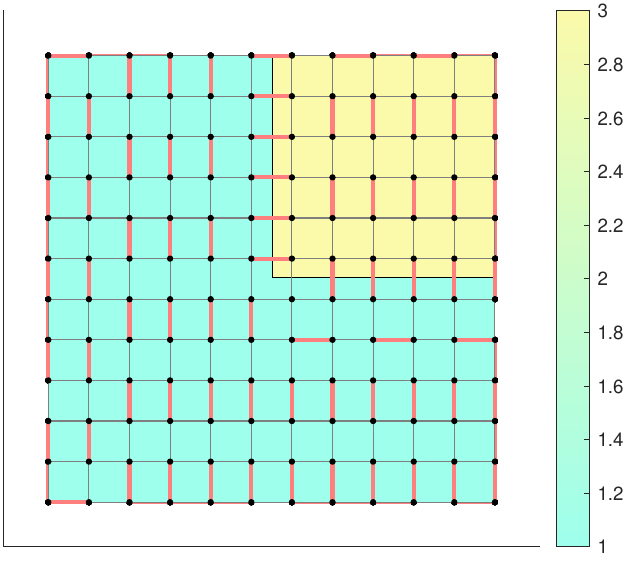}}
	\subfloat[\texttt{SUITOR} -- $\ell=2$]{\includegraphics[width=0.25\columnwidth]{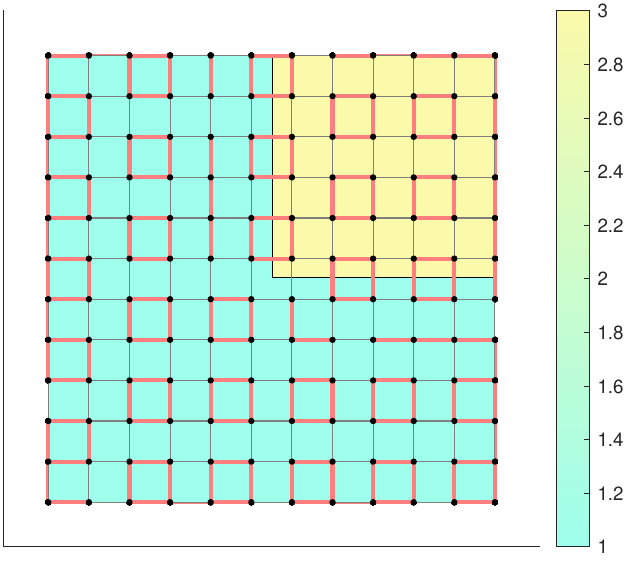}}
	\caption{Diffusion problem with piece-wise regular coefficients jumping between two values. Aggregates obtained with the weight vector $\mathbf{w}=(1,1,\ldots,1)^T$, and the different matching algorithms for $\ell=1,2$ pairwise matching steps. The colored regions represents the value of the diffusion coefficient $a(x,y)$ on the domain $\Omega$.}
	\label{fig:jumping_poisson_aggregates}
\end{figure}
To evaluate the quality of the attained aggregates we can look again at the $\mu_c$ constant given in Table~\ref{tab:jumping_poisson_aggregates}.
\begin{table}[htbp]
	\centering
	\caption{Diffusion problem with piece-wise regular coefficients jumping between two values. Comparison of the bound in Theorem~\ref{thm:our_convergence_result} with true value of $\mu_c$ in~\eqref{eq:finalcrate}. Aggregates obtained with the weight vector $\mathbf{w}=(1,1,\ldots,1)^T$, and the different matching algorithms for $\ell=1,2$ pairwise matching steps.}
	\label{tab:jumping_poisson_aggregates}
	\subfloat[\texttt{HSL\_MC64} -- exact matching]{
		\begin{tabular}{lcccc}
			
			& \multicolumn{2}{c}{$\ell = 1$} & \multicolumn{2}{c}{$\ell = 2$} \\
			\cmidrule(l{2pt}r{2pt}){2-3}\cmidrule(l{2pt}r{2pt}){4-5}
			n & bound & $\mu_c^{-1}$ & bound & $\mu_c^{-1}$ \\
			\midrule
			12 & 2.205 & 1.730 & 3.385 & 2.964 \\
			24 & 2.205 & 1.934 & 3.385 & 3.762 \\
			48 & 2.000 & 1.978 & 5.766 & 4.713 \\
			96 & 2.205 & 1.996 & 5.399 & 5.399 \\
			
	\end{tabular}}\hfil
	\subfloat[\texttt{PREIS} -- $\frac{1}{2}$--approximate matching]{
		\begin{tabular}{lcccc}
			
			& \multicolumn{2}{c}{$\ell = 1$} & \multicolumn{2}{c}{$\ell = 2$} \\
			\cmidrule(l{2pt}r{2pt}){2-3}\cmidrule(l{2pt}r{2pt}){4-5}
			n & bound & $\mu_c^{-1}$ & bound & $\mu_c^{-1}$ \\
			\midrule
			12 & 2.000 & 1.885 & 2.985 & 2.428 \\
			24 & 2.000 & 1.980 & 3.360 & 2.714 \\ 
			48 & 2.477 & 1.996 & 4.135 & 3.092 \\
			96 & 2.000 & 1.999 & 3.360 & 2.772 \\
			
	\end{tabular}}
	
	\subfloat[\texttt{AUCTION} -- $\frac{1}{2}$--approximate matching]{
		\begin{tabular}{lcccc}
			
			& \multicolumn{2}{c}{$\ell = 1$} & \multicolumn{2}{c}{$\ell = 2$} \\
			\cmidrule(l{2pt}r{2pt}){2-3}\cmidrule(l{2pt}r{2pt}){4-5}
			n & bound & $\mu_c^{-1}$ & bound & $\mu_c^{-1}$ \\
			\midrule
			12 & 2.000 & 1.761 & 3.745 & 2.684 \\
			24 & 2.000 & 1.933 & 4.820 & 3.122 \\
			48 & 2.000 & 1.981 & 5.606 & 3.402 \\
			96 & 2.000 & 1.995 & 4.000 & 3.846 \\
			
	\end{tabular}}\hfil
	\subfloat[\texttt{SUITOR} -- $\frac{1}{2}$--approximate matching]{
		\begin{tabular}{lcccc}
			
			& \multicolumn{2}{c}{$\ell = 1$} & \multicolumn{2}{c}{$\ell = 2$} \\
			\cmidrule(l{2pt}r{2pt}){2-3}\cmidrule(l{2pt}r{2pt}){4-5}
			n & bound & $\mu_c^{-1}$ & bound & $\mu_c^{-1}$ \\
			\midrule
			12 & 2.000 & 1.885 & 2.985 & 2.424 \\
			24 & 2.000 & 1.980 & 3.360 & 2.714 \\
			48 & 2.477 & 1.996 & 3.854 & 1.996 \\
			96 & 2.000 & 1.999 & 3.360 & 2.772 \\
			
	\end{tabular}}

\end{table}
We observe that in this case the $\frac{1}{2}$--approximate algorithms deliver aggregates with better quality with respect to the optimal matching algorithm. These results should be compared with the ones in Section~\ref{sec:constant_coefficient_diffusion} with respect to which we observe the somewhat expected deterioration of the quality of the aggregates.

\subsubsection{Diffusion with random coefficients}\label{sec:random_coefficients_appendix} We consider now a less regular case in which the diffusion coefficient has the form $a(x,y) = 0.1 + \eta(x,y)$, for $\eta(x,y)$ a sampling of a uniform random distribution on the $[0,1]$ interval. As for the previous cases, we evaluate the quality of the attained aggregates by looking at both the $\mu_c$ constants and the relative bounds in Table~\ref{tab:random_poisson_aggregates}. For this case, the heuristic search of the decomposition needed to apply Theorem~\ref{thm:our_convergence_result} for the case in which two steps of pairwise matching were used did not succeed. Therefore, only the ``\emph{true}'' constant could be computed.
\begin{table}[htpb]
	\centering
	\caption{{Diffusion problem with random diffusion coefficient. Comparison of the bound in Theorem~\ref{thm:our_convergence_result} with true value of $\mu_c$ in~\eqref{eq:finalcrate}. Aggregates obtained with the weight vector $\mathbf{w}=(1,1,\ldots,1)^T$, and the different matching algorithms for $\ell=1$ pairwise matching steps for a single instance.}}
	\label{tab:random_poisson_aggregates}
	\subfloat[\texttt{HSL\_MC64} -- exact matching]{
		{\begin{tabular}{lcccc}
				n & bound & $\mu_c^{-1}$ \\
				\midrule
				12 & 2.606 & 1.564 \\
				24 & 2.368 & 1.506 \\
				48 & 2.190 & 1.667 \\
				96 & 2.081 & 1.716 \\
	\end{tabular}}}\hfil
	\subfloat[\texttt{PREIS} -- $\frac{1}{2}$--~approximate matching]{
		{\begin{tabular}{lcccc}
				n & bound & $\mu_c^{-1}$ \\
				\midrule
				12 & 2.694 & 1.562 \\
				24 & 2.698 & 1.726 \\
				48 & 2.474 & 1.970 \\
				96 & 2.424 & 2.062 \\
	\end{tabular}}}\hfil
	
	\subfloat[\texttt{AUCTION} -- $\frac{1}{2}$--~approximate matching]{
		{\begin{tabular}{lcccc}
				n & bound & $\mu_c^{-1}$ \\
				\midrule
				12 & 2.752 & 1.450 \\
				24 & 2.247 & 1.731 \\
				48 & 2.179 & 1.629 \\
				96 & 2.134 & 1.693 \\
	\end{tabular}}}\hfil
	\subfloat[\texttt{SUITOR} -- $\frac{1}{2}$--~approximate matching]{
		{\begin{tabular}{lcccc}
				n & bound & $\mu_c^{-1}$ \\
				\midrule
				12 & 2.524 & 1.477 \\
				24 & 2.318 & 1.794 \\
				48 & 2.079 & 1.654 \\
				96 & 2.181 & 1.839 \\
	\end{tabular}}}

\end{table}
{Thus, to have an overview of the mean behavior over several rounds, we consider the box-plots in Figures~\ref{fig:1d_aggregate} and~\ref{fig:2d_aggregate}. Each box represents the $\mu_c^{-1}$ constant over one hundred different samplings of the coefficient function for the discrete Laplacian matrix.}
\begin{figure}[htbp]
	\centering
	\subfloat[$\ell=1$\label{fig:1d_aggregate}]{\includegraphics[width=0.48\columnwidth]{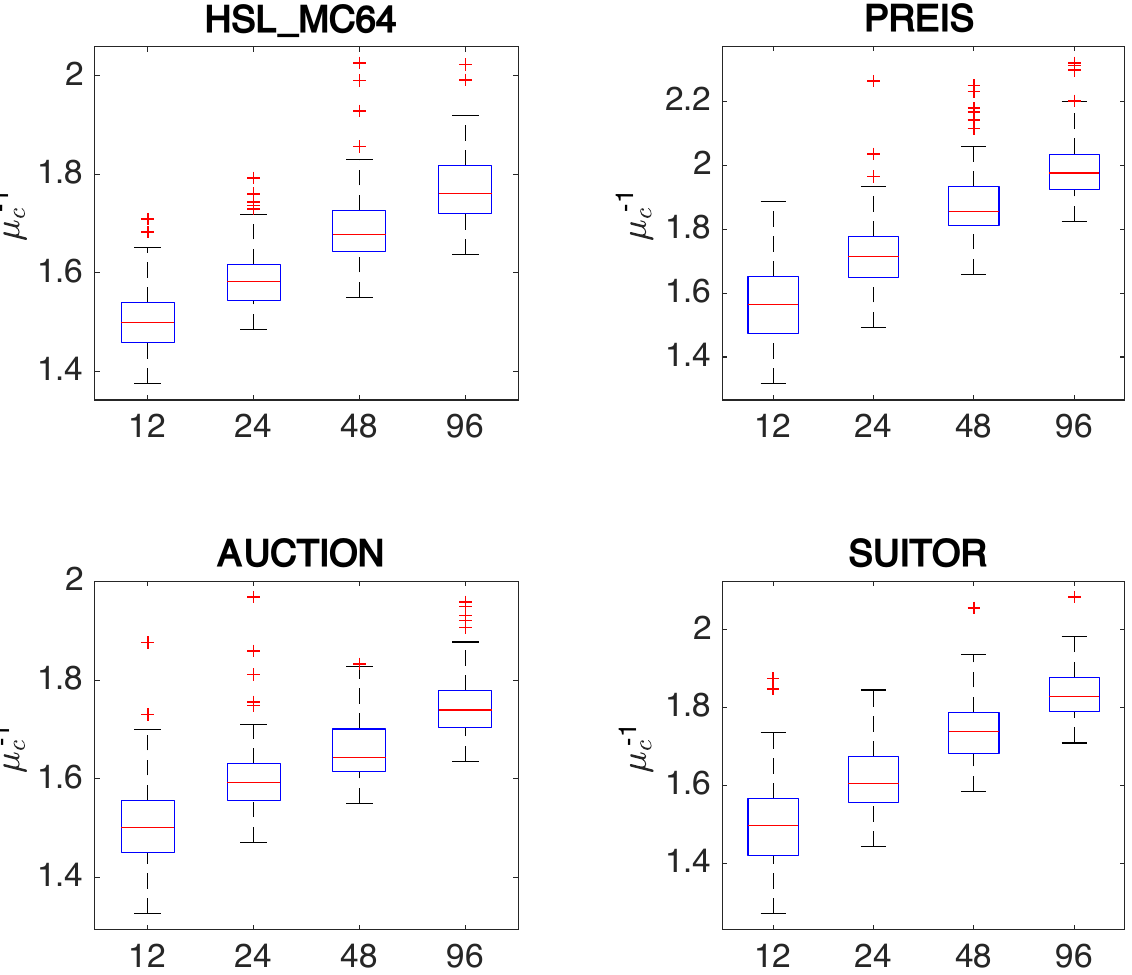}}\hfil
	\subfloat[$\ell=2$\label{fig:2d_aggregate}]{\includegraphics[width=0.48\columnwidth]{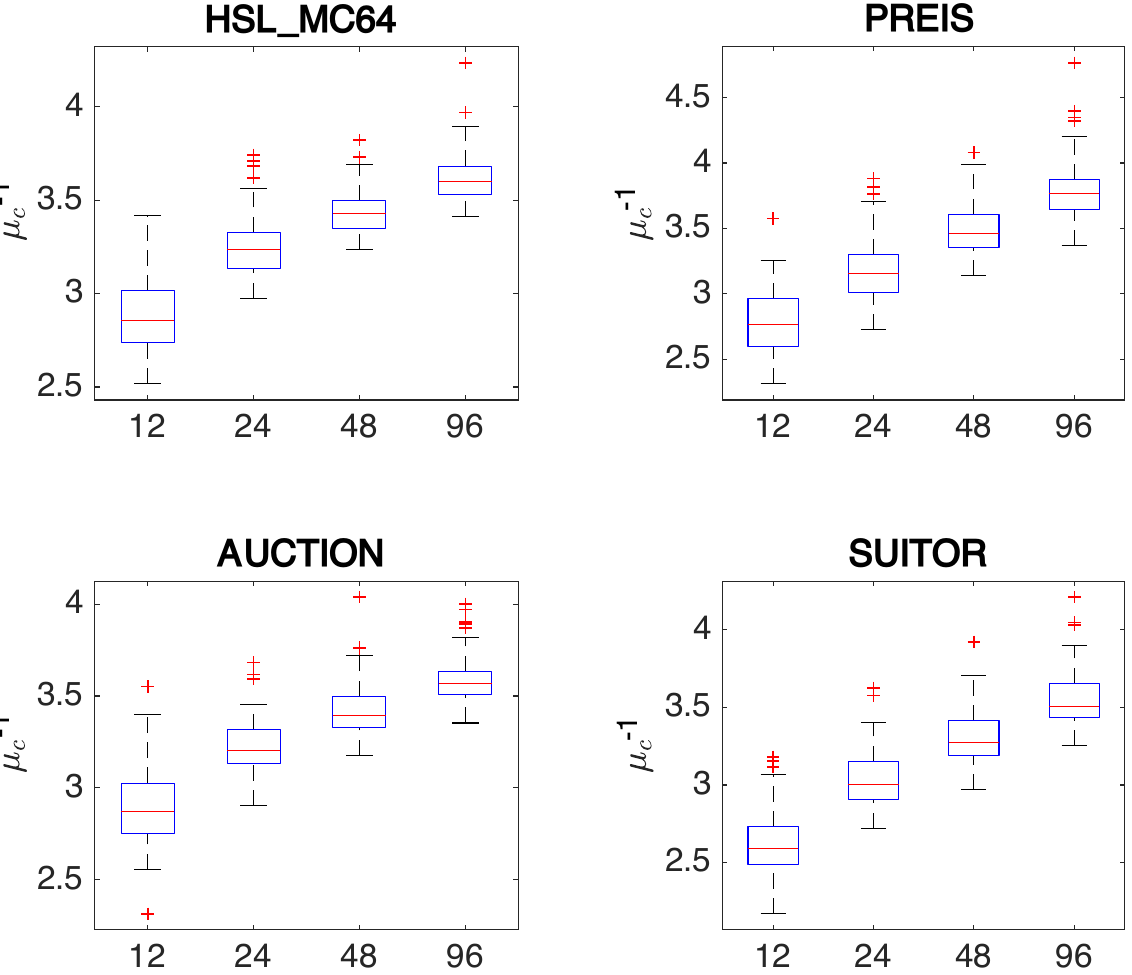}}
	\caption{{Diffusion problem with random diffusion coefficient. Box-plots for the $\mu_c$ constants over one-hundred random samples.}}
\end{figure}
{For all the instances the values of the quality constant for the aggregates behave consistently for both $\ell=1$ and $\ell=2$ steps of matching. Moreover, the outliers are limited in number and value. If we compare these results with the one for the cases with the anisotropies in Table~\ref{tab:yanisotropy_poisson_aggregates} we observe also that the overall effect is indeed comparable.}

\subsection{Selecting the weight vector}

This appendix expands on the experiments discussed in Section~\ref{sec:selecting_weight_vectors}. We focus again on the problem of computing a good weight vector $\mathbf{w}$ for the aggregation procedure.

\subsubsection*{Random weight}

For the two auxiliary test cases discussed in Appendix~\ref{sec:appendix_computing_muc} we consider also the problem of finding a good $\mathbf{w}$ vector. 
From the results in Figure~\ref{fig:refinement_from_random_appendix} we observe again that a random initial guess without any refinement is an inferior choice, and we need several refinement steps to obtain constants $\mu_c$ that are comparable with the ones we have seen in Section~\ref{sec:computing_the_muc_constant}. Furthermore, in the jumping coefficient's case, the choice is so poor that even iterating on it does not deliver a good enough result.
\begin{figure}[htbp]
	\subfloat[Diffusion problem with piece-wise regular coefficients jumping between two values]{\includegraphics[width=\columnwidth]{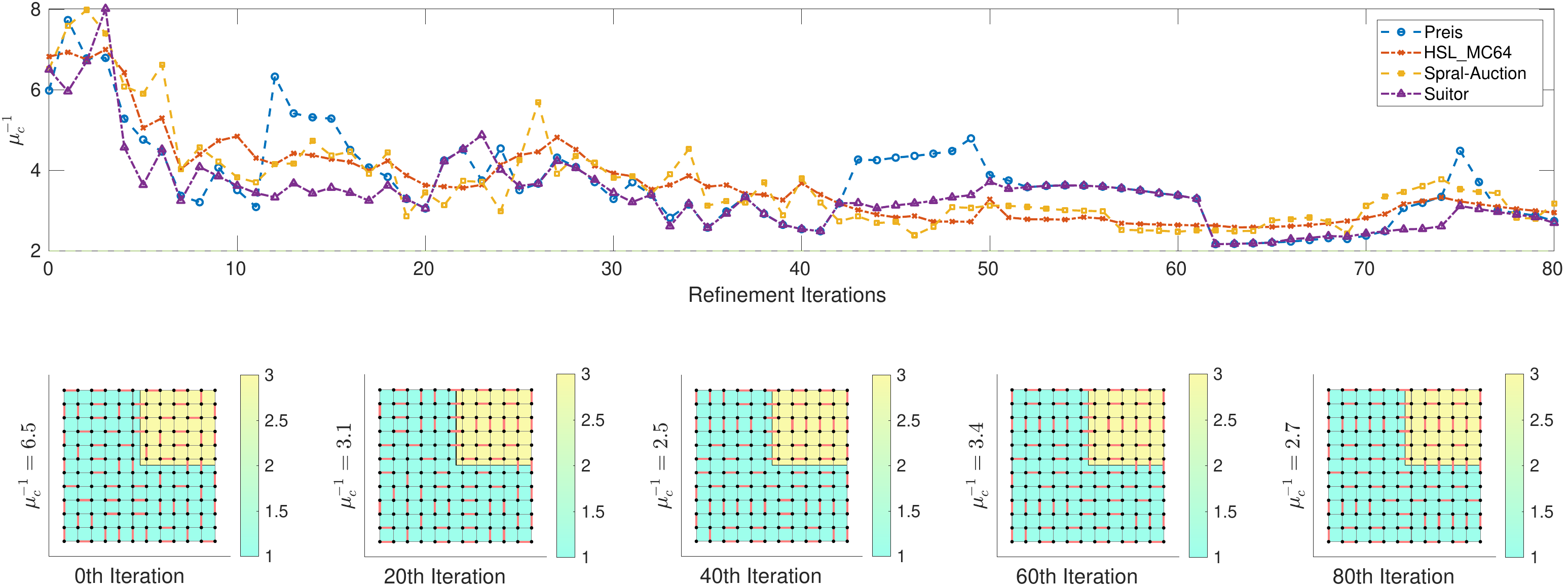}}
	
	\subfloat[Diffusion problem with random diffusion coefficient]{\includegraphics[width=\columnwidth]{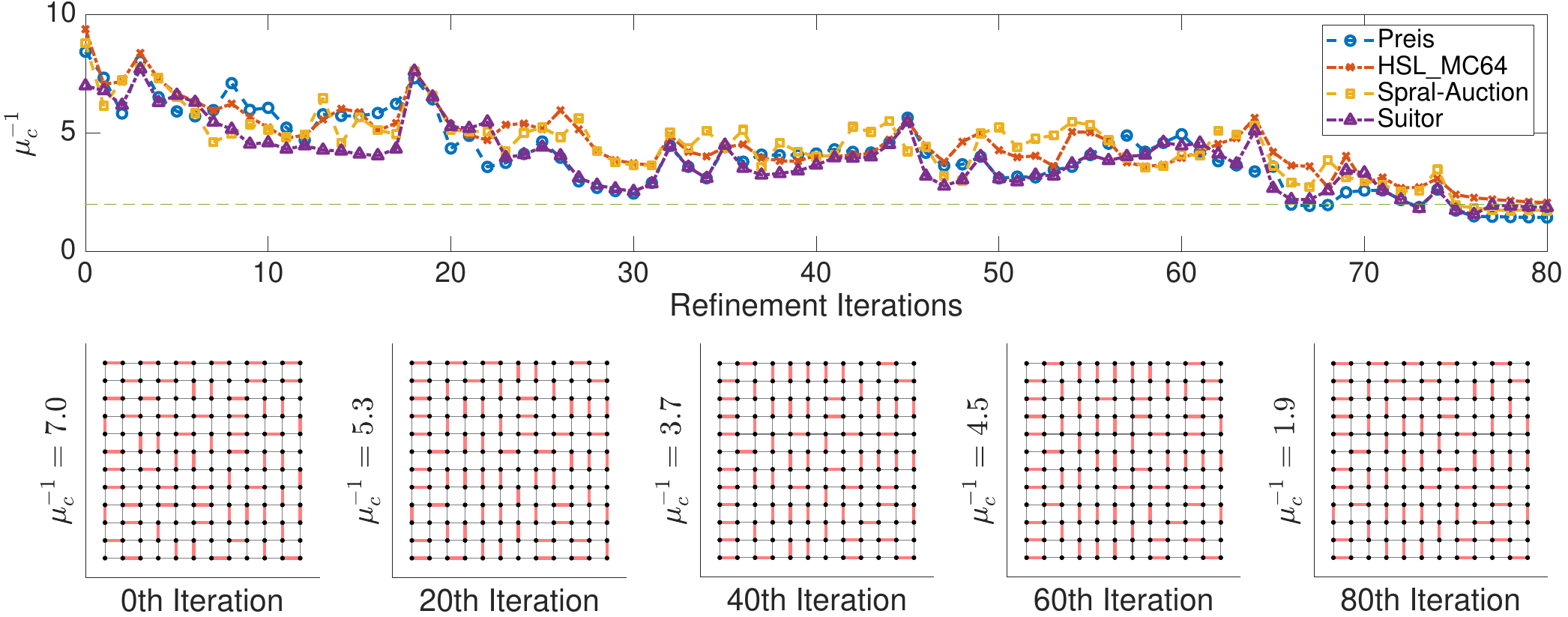}}

	\caption{Refinement of the weight vector starting from a random guess, and using the $\ell_1$--Jacobi smoother. We report a graph containing the $\mu_c^{-1}$ constant up to 80 refinement steps for a single sweep of pairwise aggregation. The depicted aggregates are the ones obtained with the \texttt{SUITOR} algorithm.}
	\label{fig:refinement_from_random_appendix}
\end{figure}
To make a comparison Figure~\ref{fig:refinement_from_random_appendix} should be looked alongside Figure~\ref{fig:refinement_from_random}. With respect to those results we actually need more than 80 iterations to bring the $\mu_c^{-1}$ constant below the value $2.0$.

\subsubsection*{Refined uniform weight}

We collect here the results for the refinements obtained starting with the $\mathbf{w}=(1,1,\ldots,1)^T$ vector for the two test cases discussed in the appendix. The results are given in Figure~\ref{fig:refinement_from_unit_appendix} and should be looked at alongside the ones in Figure~\ref{fig:refinement_from_unit}.
\begin{figure}[htbp]
    \centering

	\subfloat[Diffusion problem with piece-wise regular coefficients jumping between two values]{\includegraphics[width=\columnwidth]{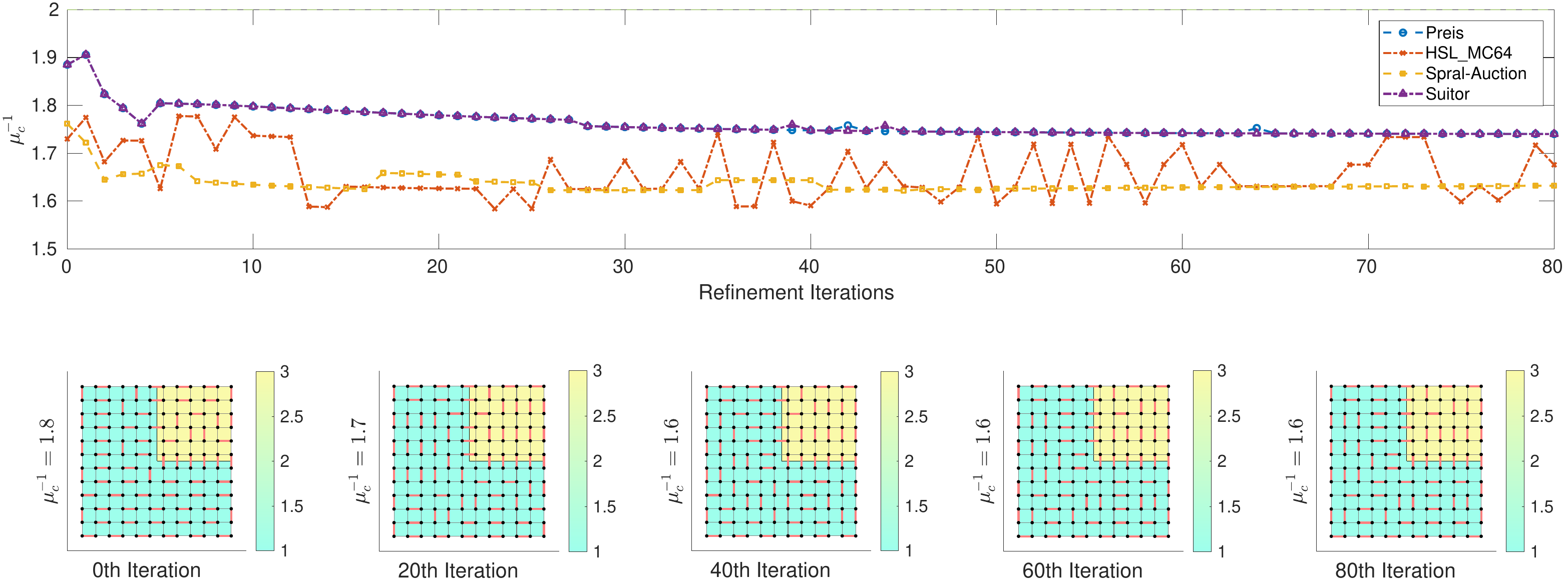}}

	\subfloat[Diffusion problem with random diffusion coefficient]{\includegraphics[width=\columnwidth]{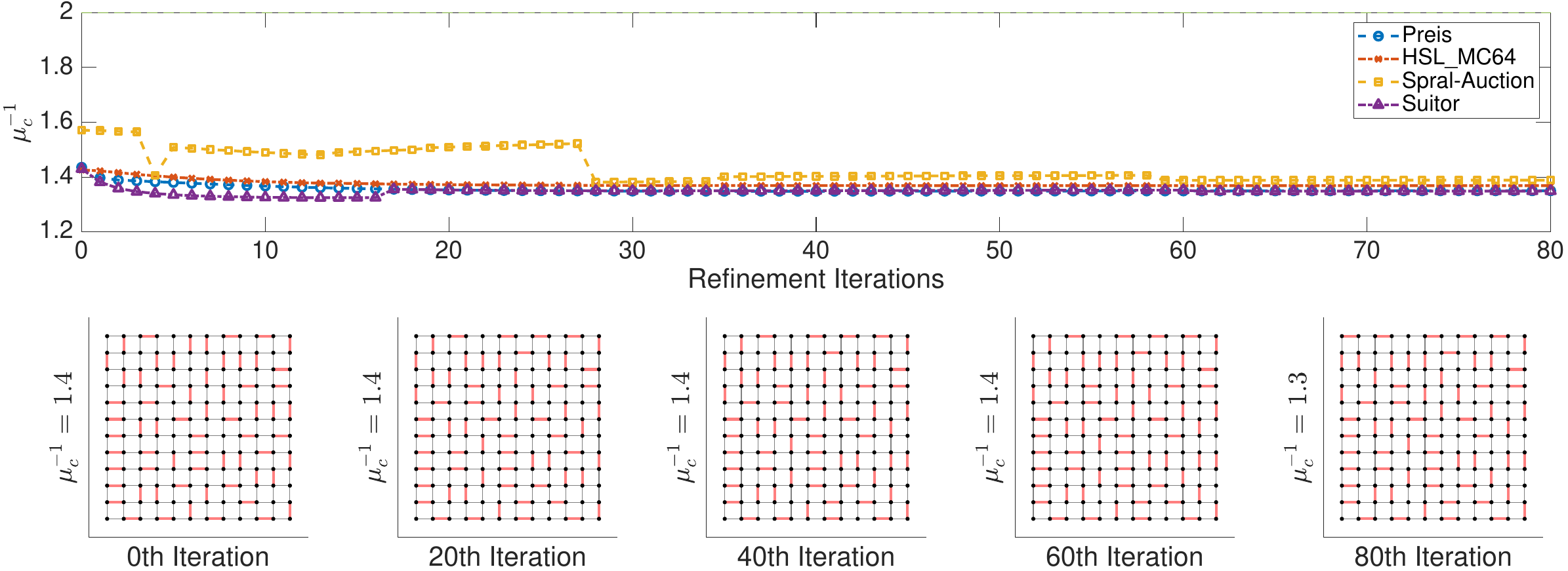}}

	\caption{Refinement of the weight vector starting from the all one guess, and using the $\ell_1$--Jacobi smoother. We report a graph containing the $\mu_c^{-1}$ constant up to 80 refinement steps for a single sweep of pairwise aggregation. The depicted aggregates are the ones obtained with the \texttt{AUCTION} algorithm.}
	\label{fig:refinement_from_unit_appendix}
\end{figure}
Also in this case the behavior is analogous, and already a few iterations of the select smoother are able to reduce the $\mu_c^{-1}$ constant below the value $2.0$ as it was happening for the other cases. We stress also that even on a regular grid, the displacement of these aggregates for the coefficient described here is nontrivial.   

\subsubsection*{The eigenvector weight}

We collect here the results obtained using the eigenvector relative to the smallest eigenvalue as weight vector $\mathbf{w}$. We complete with Table~\ref{tab:eigenvector_choice_appendix} the information we gave in Table~\ref{tab:eigenvector_choice} with the additional cases considered here, i.e., the diffusion with discontinuity in the coefficients and the random diffusion coefficients.
\begin{table}[htb]
	\centering
	\caption{Constants $\mu_c^{-1}$ obtained by using as weight vector $\mathbf{w}$ the eigenvector relative to the smallest eigenvalue as suggested by Proposition~\ref{pro:at-least-is-convergent}.}
	\label{tab:eigenvector_choice_appendix}
	\begin{tabular}{ccccccccc}
		
		& \multicolumn{2}{c}{jumping coeff.s} & \multicolumn{2}{c}{random coeff.s} \\
		\cmidrule(l{2pt}r{2pt}){1-3}\cmidrule(l{2pt}r{2pt}){4-5}
		n  & $\ell = 1$ & $\ell = 2$ & $\ell = 1$ & $\ell = 2$ \\
		\cmidrule(l{2pt}r{2pt}){1-3}\cmidrule(l{2pt}r{2pt}){4-5}
		12  & 1.604 & 2.227 &  1.6612 & 2.2900 \\
		24  & 1.881 & 2.582 & 1.5209 & 2.4982 \\
		48  & 1.974 & 2.971 & 1.5537 & 2.9712 \\
		96  & 1.994 & 2.965 & 1.4373 & 2.3346\\
	\end{tabular}	
\end{table}
We can depict also in this case the aggregates obtained by this method in Figure~\ref{fig:eigenvector_choice_appendix} and that can be analyzed alongside the ones in Figure~\ref{fig:eigenvector_choice}. 
\begin{figure}[htbp]
\centering
	\subfloat[Diffusion problem with piece-wise regular coefficients jumping between two values]{
		\includegraphics[width=0.23\columnwidth]{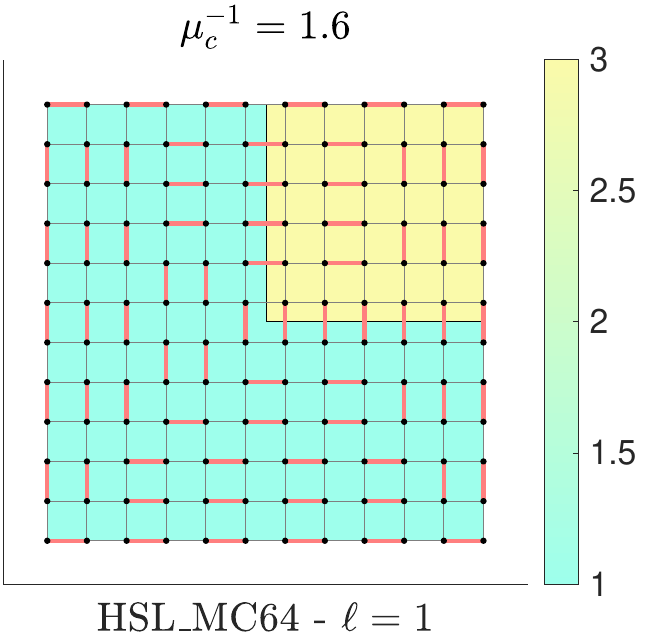}
		\includegraphics[width=0.23\columnwidth]{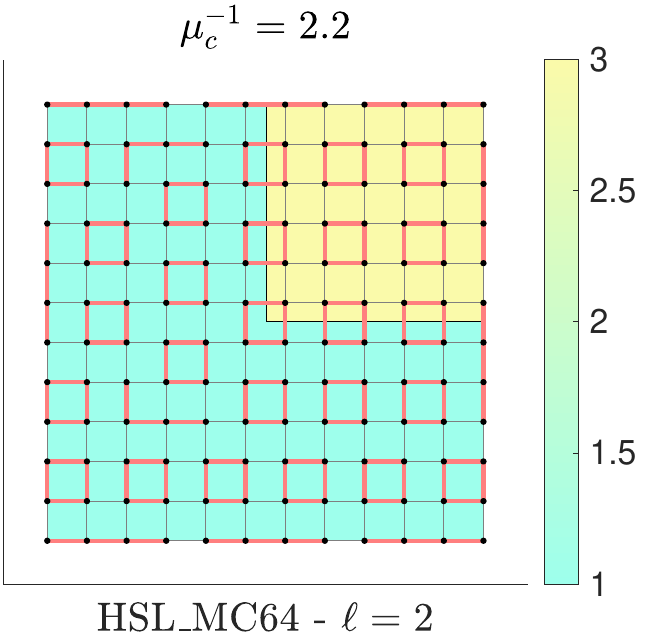}
	}\hfil
	\subfloat[{Diffusion problem with random diffusion coefficient}]{
		\includegraphics[width=0.2\columnwidth]{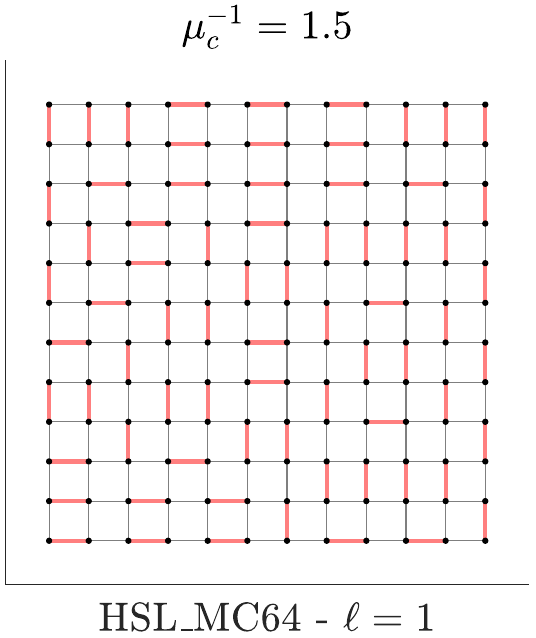}
		\includegraphics[width=0.2\columnwidth]{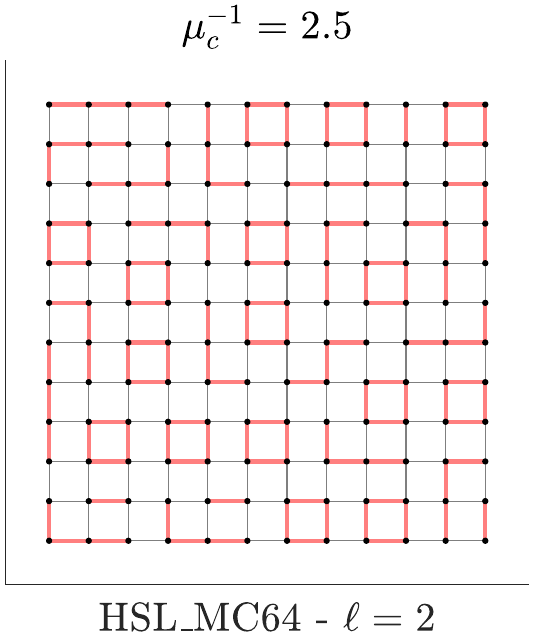}
	}
	\caption{Aggregates obtained by using as weight vector $\mathbf{w}$ the eigenvector associated with the smallest eigenvalue as suggested by Proposition~\ref{pro:at-least-is-convergent}}
	\label{fig:eigenvector_choice_appendix}
\end{figure}
If we focus particularly on the case with $\ell=2$ sweeps of matching we can observe that again the algorithm has found on its own nontrivial aggregates to try to automatically accommodate the irregularities in the coefficients of the matrix.

\subsubsection{The bootstrap procedure}\label{sec:thebootstrapprocedureresults}

We have observed in the previous section that refining the weight vector by means of a standard stationary method may require a certain number of iterations, i.e., a certain number of matrix-vector products. While in some cases this may be feasible, e.g., if we plan to  reuse the same multigrid hierarchy for (many) different solutions, in other cases we could decide to exploit this larger setup time to achieve more than the simple refinement of the weight vector $\mathbf{w}$.

We have recalled in~\eqref{eq:bootstrapiteration} how we can use the multigrid hierarchy itself to refine the choice of the weight $\mathbf{w}$. The secondary effect of having built this composite solver, as discussed in~\cite[Section~5]{BootCMatch}, is then the possibility of using it as a preconditioner for a Krylov subspace solver. Therefore, the setup cost is compensated by the trade-off between the necessity of obtaining a better weight vector $\mathbf{w}$, and having an efficient preconditioner with a user's defined convergence rate.

As an application, we consider here only the case of the diffusion with jumping coefficients from Section~\ref{sec:diffusion_with_jumping_coefficients}. Specifically, we consider the case in which we initialize the bootstrapping procedure with the vector $\mathbf{w}$ obtained after five-step of the stationary $\ell_1$-Jacobi method applied to the uniform, all one vector. We perform $\ell=2$ steps of pairwise aggregation with the \texttt{HSL\_MC64} algorithm, and 4 bootstrap iterations where each AMG operator is applied as a V-cycle and 1 iteration of $\ell_1$-Jacobi is applied for pre/post smoothing. 
\begin{figure}[htb]
	\centering
	\includegraphics[width=\columnwidth]{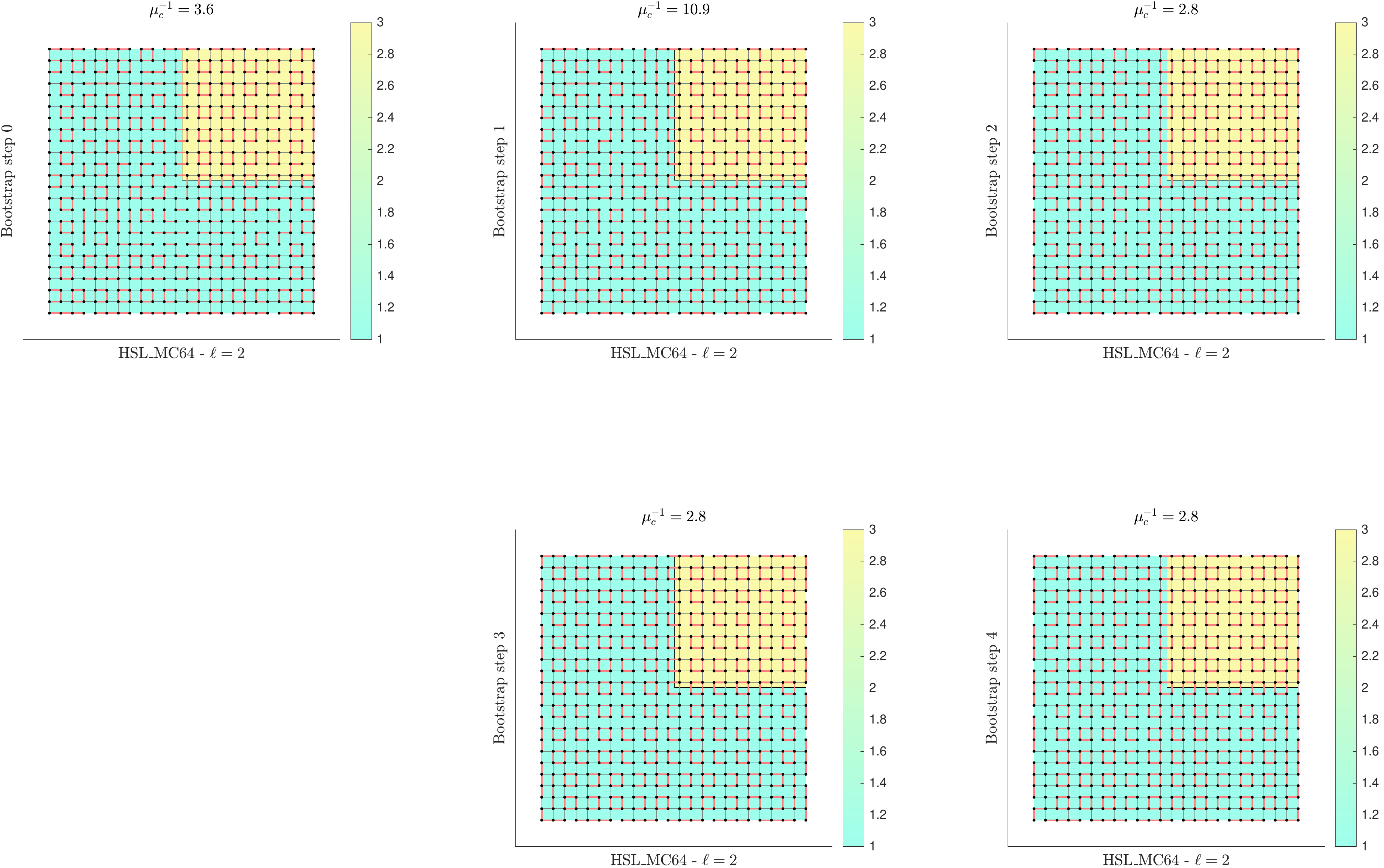}
	\caption{Diffusion problem with piece-wise regular coefficients jumping between two values. Aggregates obtained by using the weight vector $\mathbf{w}$ from 4 consecutive steps of bootstrap iteration. The procedure is initialized with the vector $\mathbf{w}_0$ obtained after five step of the stationary $\ell_1$-Jacobi method applied to the all one vector.}
	\label{fig:bootstrap}
\end{figure}
We reported the obtained aggregates, together with the $\mu_c$ constants, in Figure~\ref{fig:bootstrap}. We observe that consistent with what was happening when using the simple stationary iterative method, the reduction of the $\mu_c$ constant is not monotone; see again Figures~\ref{fig:refinement_from_random}, and~\ref{fig:refinement_from_unit}. Nevertheless, we get aggregates with better quality if compared with the results in Table~\ref{tab:jumping_poisson_aggregates}. Moreover, after the initial oscillation in quality the value of the $\mu_c$ constant, as expected, seems to stabilize; however, even if the overall constant is the same, the aggregates obtained are different. It is worth noting that, when using the bootstrap procedure to produce a preconditioner,  the fact that the aggregates stop improving after a certain number of steps does not necessarily imply that the convergence ratio of the product hierarchy also stops improving. Indeed, the convergence ratio is guaranteed to become better for each newly added component; see the analysis in~\cite[Section~5]{BootCMatch}.

\subsection{Compatible relaxation}

We complete the data from Table~\ref{tab:convergence_ratio} using the compatible relaxation principle discussed in Section~\ref{sec:compatiblerel} with the test cases discussed in this Appendix. Namely, the case with random coefficients and the case of diffusion with a discontinuity jumping between two values. From the comparison of the results collected in Table~\ref{tab:convergence_ratio_appendix} with the ones in Table~\ref{tab:convergence_ratio} we observe again an analogous behavior. 
\begin{table}[htbp]
	\centering
	\caption{Convergence ratio $\rho_f$ of the compatible relaxation scheme~\eqref{eq:cr_convratio} for all the test problems. The coarse space is built from a single step of all the matching algorithms from Section~\ref{sec:matching_algorithms} with weight vector choice $\mathbf{w} = (1,1,\ldots,1)^T$, and no refinement iterations.}
	\label{tab:convergence_ratio_appendix} 
	\subfloat[Diffusion problem with piece-wise regular coefficients jumping between two values]{
		\begin{tabular}{ccccc}
			
			$n$   & \texttt{HSL\_MC64} &\texttt{PREIS} & \texttt{AUCTION} & \texttt{SUITOR} \\
			\cmidrule(l{2pt}r{2pt}){1-1}\cmidrule(l{2pt}r{2pt}){2-2}\cmidrule(l{2pt}r{2pt}){3-3}\cmidrule(l{2pt}r{2pt}){4-4}\cmidrule(l{2pt}r{2pt}){5-5} 
			12 & 0.765 & 0.793 & 0.750 & 0.793 \\
			24 & 0.816 & 0.824 & 0.805 & 0.824 \\
			48 & 0.826 & 0.831 & 0.829 & 0.831 \\
			96 & 0.832 & 0.833 & 0.832 & 0.833 \\
	\end{tabular}}
	
		\subfloat[{Diffusion problem with random diffusion coefficient}]{
		\begin{tabular}{ccccc}
			
			$n$   & \texttt{HSL\_MC64} &\texttt{PREIS} & \texttt{AUCTION} & \texttt{SUITOR} \\
			\cmidrule(l{2pt}r{2pt}){1-1}\cmidrule(l{2pt}r{2pt}){2-2}\cmidrule(l{2pt}r{2pt}){3-3}\cmidrule(l{2pt}r{2pt}){4-4}\cmidrule(l{2pt}r{2pt}){5-5} 
			12 & 0.779 & 0.784 & 0.780 & 0.808 \\
			24 & 0.828 & 0.789 & 0.811 & 0.797 \\
			48 & 0.820 & 0.819 & 0.833 & 0.824 \\
			96 & 0.848 & 0.841 & 0.854 & 0.828 \\
	\end{tabular}}
\end{table}
Furthermore, we stress again that the ``collective'' number given by the compatible relaxation principle blends together the behavior of the smoothing and coarsening strategy. Thus the difference we have observed in the quality of the aggregation procedure are harder to interpret by using only this measure.

\end{appendices}

\bibliography{multigridconvergence}%

\end{document}